\documentclass{amsart}

\RequirePackage{fix-cm}
\usepackage{graphicx}
\usepackage{amsmath, amsopn,amstext,amscd,amsfonts,amssymb}
\usepackage{dsfont}
\usepackage{comment}
\usepackage[active]{srcltx}
\usepackage{graphicx, epsfig, subfig}

\usepackage{textcomp}

\usepackage{algorithm}

\makeatletter
\def\BState{\State\hskip-\ALG@thistlm}
\makeatother

\def\downbar#1{
\setbox10=\hbox{$#1$}
            \dimen10=\ht10 \advance\dimen10 by 2.5pt
            \ifdim \dimen10<15pt 
               \advance\dimen10 by -0.5pt
               \dimen11=\dimen10
               \advance\dimen10 by 2.5pt
               \lower \dimen11
            \else \lower \ht10 \fi
            \hbox {\hskip 1.5pt \vrule height \dimen10 depth \dp10}}
\def\upbar#1{
\setbox10=\hbox{$#1$}
            \dimen10=\ht10 \advance\dimen10 by \dp10 \advance\dimen10 by 2.5pt
            \ifdim \dimen10<15pt 
                \advance\dimen10 by 2pt \fi
            \raise 2.5pt \hbox {\hskip -1.5pt \vrule height \dimen10}}

\usepackage{multicol}
\usepackage{colortbl}
\usepackage{exscale}
\usepackage{amssymb,latexsym,amsthm,amsfonts,color,fancyhdr,mathrsfs}
\usepackage{lscape}
\usepackage{rotating}
\usepackage{caption}

\newtheorem{definition}{\bf Definition}[section]
\newtheorem{theorem}{\bf Theorem}[section]
\newtheorem{proposition}{\bf Proposition}[section]
\newtheorem{lemma}{\bf Lemma}[section]

\newtheorem{corollary}{\bf Corollary}[section]
\newtheorem{remark}{\bf Remark}[section]
\newtheorem{conjecture}{\bf Conjecture}[section]


\numberwithin{equation}{section}
\bibliographystyle{elsarticle-num}

\begin{document}
\title[On classical OPS and characterization theorems]{On classical orthogonal polynomials on lattices and some characterization theorems}

\author{K. Castillo}
\address{University of Coimbra, CMUC, Dep. Mathematics, 3001-501 Coimbra, Portugal}
\email{kenier@mat.uc.pt}
\author{D. Mbouna}
\address{University of Almer\'ia, Department of Mathematics, Almer\'ia, Spain}
\email{mbouna@ual.es}
\author{J. Petronilho}

\subjclass[2010]{42C05, 33C45}
\date{\today}
\keywords{Functional equation, regular functional, classical orthogonal polynomials, lattices, Racah polynomials, Askey-Wilson polynomials}

\maketitle

In this chapter are given necessary and sufficient conditions for the regularity of solutions of the functional equation appearing in the theory of classical orthogonal polynomials. In addition, we also present the functional Rodrigues formula and a closed formula for the recurrence coefficients. We finally used these results to solve some interesting research problems concerning characterization theorems.

\section{Introduction}\label{introduction}

Let $\mathcal{P}$ be the vector space of all polynomials with complex coefficients
and let $\mathcal{P}^*$ be its algebraic dual.
$\mathcal{P}_n$ denotes the space of all polynomials with degree less than or equal to $n$.
Define $\mathcal{P}_{-1}:=\{0\}$.
A simple set in $\mathcal{P}$ is a sequence $(P_n)_{n\geq0}$ such that
$P_n\in\mathcal{P}_n\setminus\mathcal{P}_{n-1}$ for each $n$.
A simple set $(P_n)_{n\geq0}$ is called an orthogonal polynomial sequence (OPS)
with respect to ${\bf u}\in\mathcal{P}^*$ if 
$$
\langle{\bf u},P_nP_k\rangle=h_n\delta_{n,k}\quad(n,k=0,1,\ldots;\;h_n\in\mathbb{C}\setminus\{0\})\;,
$$
where $\langle{\bf u},f\rangle$ is the action of ${\bf u}$ on $f\in\mathcal{P}$. 
${\bf u}$ is called regular, or quasi-definite, if
there exists an OPS with respect to it. The left multiplication of a functional ${\bf u}$ by a polynomial $\phi$ is defined by
$$
\left\langle \phi {\bf u}, f  \right\rangle =\left\langle {\bf u},\phi f  \right\rangle \quad (f\in \mathcal{P}).
$$
A sequence $({\bf a}_n)_{n\geq 0}$ is called dual basis for the sequence of simple set polynomials $(P_n)_{n\geq 0}$ if $\left\langle {\bf a}_n,P_m\right\rangle=\delta_{n,m}$, for each $n,m=0,1,\ldots\;.$
Consequently, if $(P_n)_{n\geq0}$ is a monic OPS with respect to ${\bf u}\in\mathcal{P}^*$, then the corresponding dual basis is explicitly given by 
\begin{align}\label{expression-an}
{\bf a}_n =\left\langle {\bf u} , P_n ^2 \right\rangle ^{-1} P_n{\bf u}.
\end{align}
Any functional ${\bf u} \in \mathcal{P}^*$ ($\mathcal{P}$ being endowed with an appropriate strict inductive limit topology --- see e.g. \cite{KCJPLN, M1991}) can be written  as 
\begin{align*}
{\bf u} = \sum_{n=0} ^{\infty} \left\langle {\bf u}, P_n \right\rangle {\bf a}_n,
\end{align*}
in the sense of the weak dual topology in $\mathcal{P}^*$, where $({\bf a}_n)_{n\geq 0}$ is the dual basis for the sequence of polynomials $(P_n)_{n\geq 0}$. These are some basic facts of the algebraic theory on orthogonal polynomials due to P. Maroni (see \cite{M1985, M1988, M1991, M1994}). For a recent survey on the subject we refer the reader to \cite{KCJPLN}.

\subsection{Lattices}
\begin{definition}
A lattice in this framework is a mapping $x(s)$ given by
\begin{equation}
\label{xs-def}
x(s):=\left\{
\begin{array}{ccl}
\mathfrak{c}_1 q^{-s} +\mathfrak{c}_2 q^s +\mathfrak{c}_3 & {\rm if} &  q\neq1 \;,\\ [0.75em]
\mathfrak{c}_4 s^2 + \mathfrak{c}_5 s +\mathfrak{c}_6 & {\rm if} &  q =1\;,
\end{array}
\right.
\end{equation}
$s\in\mathbb{C}$, where $q>0$ (fixed) and $\mathfrak{c}_j$ ($1\leq j\leq6$) are (complex) constants,
that may depend on $q$, such that $(\mathfrak{c}_1,\mathfrak{c}_2)\neq(0,0)$ if $q\neq1$,
and $(\mathfrak{c}_4,\mathfrak{c}_5,\mathfrak{c}_6)\neq(0,0,0)$ if $q=1$.
\end{definition}
In the case $q=1$, the lattice is called quadratic if $\mathfrak{c}_4\neq0$,
and linear if $\mathfrak{c}_4=0$;
and in the case $q\neq1$, it is called $q-$quadratic if $\mathfrak{c}_1\mathfrak{c}_2\neq0$,
and $q-$linear if $\mathfrak{c}_1\mathfrak{c}_2=0$ (cf. \cite{ARS1995}).
Notice that
$$
\frac{x\big(s+\frac12\big)+x\big(s-\frac12\big)}{2}=\alpha x(s)+\beta\;,
$$
where $\alpha$ and $\beta$ are given by
\begin{equation}\label{alpha-beta}
\alpha:=\frac{q^{1/2}+q^{-1/2}}{2}\;,\quad
\beta:=\left\{
\begin{array}{ccl}
(1-\alpha)\mathfrak{c}_3 & {\rm if} &  q\neq1 \;,\\ [0.75em]
\mathfrak{c}_4/4 & {\rm if} &  q =1\;.
\end{array}
\right.
\end{equation}
The lattice $x(s)$ fulfills (cf. \cite{ARS1995}): 
\begin{align*}
\frac{x(s+n)+x(s)}{2}&=\alpha_n x_n (s) +\beta_n \,, \\
x(s+n)-x(s)&=\gamma_n \nabla x_{n+1} (s)\;,
\end{align*}
for each $n=0,1,\ldots$, where $x_{\mu}(s):=x\big(s+\mbox{$\frac\mu2$}\big)$, $\nabla f(s):=f(s)-f(s-1)$,
and $(\alpha_n)_{n\geq0}$, $(\beta_n)_{n\geq0}$, and $(\gamma_n)_{n\geq0}$ are sequences of numbers
generated by the following system of difference equations
\begin{align}
&\alpha_0 =1\;,\quad \alpha_1=\alpha\;,\quad\alpha_{n+1} -2\alpha\alpha_n +\alpha_{n-1} =0 \label{1.2}\\
&\beta_0 =0\;,\quad \beta_1 =\beta\;,\quad\beta_{n+1} -2\beta_n + \beta_{n-1} =2\beta\alpha_{n} \label{1.2b} \\ 
&\gamma_0 =0\;,\quad \gamma_1 =1\;,\quad \gamma_{n+1} -\gamma_{n-1} =2\alpha_n\;, \label{1.2c}
\end{align}
for each $n=1,2,\ldots$. The explicit solutions of these difference equations are
\begin{align}
& \alpha_n = \frac{q^{n/2} +q^{-n/2}}{2}\,, \label{alpha-n} \\
& \beta_n = \displaystyle\left\{
\begin{array}{ccl}
\displaystyle\beta\,\left(\frac{q^{n/4}-q^{-n/4}}{q^{1/4}-q^{-1/4}}\right)^2 & \mbox{\rm if} & q\neq1 \\ [1em]
\beta\,n^2 & \mbox{\rm if} & q=1\,,
\end{array}
\right. \label{beta-n} \\
& \gamma_n = \displaystyle\left\{
\begin{array}{ccl}
\displaystyle\frac{q^{n/2}-q^{-n/2}}{q^{1/2}-q^{-1/2}} & \mbox{\rm if} & q\neq1 \\ [1em]
n & \mbox{\rm if} & q=1\;.
\end{array}
\right. \label{gamma-n}
\end{align}
We assume that $\alpha_{-1}:=\alpha$ and $\gamma_{-1}:=-1$, consistently with \eqref{alpha-n} and \eqref{gamma-n}.

\subsection{Operators on lattices}

\begin{definition}\label{def-DxSxNUL} 
Let $x(s)$ be a lattice given by \eqref{xs-def}.  
The $x-$derivative operator on $\mathcal{P}$, $\mathrm{D}_x$, 
and the $x-$average operator on $\mathcal{P}$, $\mathrm{S}_x$, 
are the operators on $\mathcal{P}$ defined for each $f\in\mathcal{P}$ so that $\deg(\mathrm{D}_x f)=\deg f-1$, $\deg(\mathrm{S}_x f)=\deg f$, and
\begin{align}
\mathrm{D}_x f(x(s))&= \frac{f\big(x(s+\frac{1}{2})\big)-f\big(x(s-\frac{1}{2})\big)}{x(s+\frac{1}{2})-x(s-\frac{1}{2})}\;, \label{AWxsG}  \\
\mathrm{S}_x f(x(s))&= \frac{f\left(x\big(s+\frac{1}{2}\big)\right) + f\left(x\big(s-\frac{1}{2}\big)\right)}{2}\,,
\label{AverOperG}
\end{align}
where it is understood that $\mathrm{D}_x f=f'$ and  $\mathrm{S}_x f=f$ whenever $x(s)=\mathfrak{c}_6$.
\end{definition}

The relations \eqref{AWxsG} and \eqref{AverOperG} appear in \cite[(3.2.4)-(3.2.5)]{NSU1991} up to a shift in the variable $s$. 
The operators  $\mathrm{D}_x$ and $\mathrm{S}_x$ on $\mathcal{P}$ induce two operators on the dual space $\mathcal{P}^*$, namely
$\mathbf{D}_x:\mathcal{P}^*\to\mathcal{P}^*$ and $\mathbf{S}_x:\mathcal{P}^*\to\mathcal{P}^*$, via the following definition (cf. \cite{FK-NM2011}):

\begin{definition} 
Let $x(s)$ be a lattice given by \eqref{xs-def}. 
For each ${\bf u}\in\mathcal{P}^*$, the functionals $\mathbf{D}_x{\bf u}\in\mathcal{P}^*$
and $\mathbf{S}_x{\bf u}\in\mathcal{P}^*$ are defined by
\begin{equation}\label{PNUL-def-Dxu-Sxu}
\langle \mathbf{D}_x{\bf u},f\rangle:=-\langle {\bf u},\mathrm{D}_x f\rangle\; ,\quad
\langle \mathbf{S}_x{\bf u},f\rangle:=\langle {\bf u},\mathrm{S}_x f\rangle\quad (f\in\mathcal{P})\,.
\end{equation}
We call $\mathbf{D}_x{\bf u}$ the $x-$derivative of ${\bf u}$ and $\mathbf{S}_x{\bf u}$ the $x-$average of ${\bf u}$.
\end{definition}
Hereafter, $z:=x(s)$ being a lattice given by (\ref{xs-def}). We consider two fundamental polynomials, 
$\texttt{U}_1$ and $\texttt{U}_2$, introduced in \cite{FK-NM2011}, defined by

\begin{equation} \label{U1-simples-bis}
\texttt{U}_1 (z) =\left\{
\begin{array}{lcl}
(\alpha^2-1)\big(z-\mathfrak{c}_3\big) & \mbox{\rm if} & q\neq1 \;, \\ [0.5em]
\frac{1 }{2}\mathfrak{c}_4 & \mbox{\rm if} & q=1 \,,
\end{array}
\right.
\end{equation}
and
\begin{equation} \label{U2-simples-bis}
\texttt{U}_2(z) =\left\{
\begin{array}{lcl}
(\alpha^2-1)\big((z-\mathfrak{c}_3)^2-4\mathfrak{c}_1\mathfrak{c}_2\big) & \mbox{\rm if} & q\neq1 \;, \\ [0.5em]
\mathfrak{c}_4(z-\mathfrak{c}_6)+\frac14\mathfrak{c}_5^2& \mbox{\rm if} & q=1 \;.
\end{array}
\right.
\end{equation}

We point out some useful properties (see \cite{KCDMJP2022classical, MFN-S2017, N-SKF2015} and references therein). 

\begin{proposition}\label{propertyDxSx}
Let $f,g\in\mathcal{P}$ and ${\bf u}\in\mathcal{P}^*$. Then the following properties hold:\rm
\begin{align}
 &\mathrm{D}_x \big(fg\big)= \big(\mathrm{D}_x f\big)\big(\mathrm{S}_x g\big)+\big(\mathrm{S}_x f\big)\big(\mathrm{D}_x g\big)\;, \label{1.3} \\
 &\mathrm{S}_x \big( fg\big)= \big(\mathrm{D}_x f\big) \big(\mathrm{D}_x g\big)\texttt{U}_2 +\big(\mathrm{S}_x f\big) \big(\mathrm{S}_x g\big)\;, \label{1.4} \\
&f\mathrm{S}_xg
=\mathrm{S}_x\Big(\big(\mathrm{S}_xf-\alpha^{-1}\texttt{U}_1\,\mathrm{D}_xf\big)g\Big)
-\alpha^{-1}\texttt{U}_2\mathrm{D}_x\big(g\mathrm{D}_xf\big)\;, \label{PropNUL2} \\
 &f\mathrm{D}_x g =\mathrm{D}_x\Big(\big(\mathrm{S}_xf-\alpha^{-1}\texttt{U}_1\,\mathrm{D}_xf\big)g\Big)
-\alpha^{-1}\mathrm{S}_x\big(g\mathrm{D}_xf\big)\;, \label{PropNUL1} \\
&\mathbf{D}_x(f{\bf u})
=\left(\mathrm{S}_x f -\alpha^{-1}\texttt{U}_1\mathrm{D}_x f\right)\mathbf{D}_x {\bf u} +
\alpha^{-1} \big(\mathrm{D}_x f\big) \mathbf{S}_x {\bf u}\;, \label{a} \\
&\mathbf{S}_x (f {\bf u}) 
= \big(\alpha \texttt{U}_2 -\alpha^{-1}\texttt{U}_1^2  \big)(\mathrm{D}_x f)\mathbf{D}_x {\bf u} +
\big(\mathrm{S}_x f +\alpha^{-1}\texttt{U}_1\mathrm{D}_x f  \big)\mathbf{S}_x {\bf u}\;,  \label{1} \\
&\mathrm{D}_x ^n \mathrm{S}_x f = \alpha_n \mathrm{S}_x \mathrm{D}_x ^n f +\gamma_n \mbox{\rm $\texttt{U}_1$} \mathrm{D}_x ^{n+1}f \quad (n=0,1,\ldots)\;,\\
&\alpha \mathbf{D}_x ^n \mathbf{S}_x {\bf u}
= \alpha_{n+1} \mathbf{S}_x \mathbf{D}_x^n {\bf u}
+\gamma_n\mbox{\rm $\texttt{U}_1$}\mathbf{D}_x^{n+1}{\bf u}\quad (n=0,1,2,\ldots)\label{def Dx^nSx-u}\;.
\end{align}
\end{proposition}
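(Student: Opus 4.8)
The plan is to reduce every one of these identities to an elementary computation with the explicit lattice \eqref{xs-def}, and then to obtain the dual identities by transposition. Throughout I write $z=x(s)$ and $z^{\pm}:=x\big(s\pm\tfrac12\big)$, so that by definition $\mathrm{D}_x f(z)=\big(f(z^+)-f(z^-)\big)\big/\big(z^+-z^-\big)$ and $\mathrm{S}_x f(z)=\tfrac12\big(f(z^+)+f(z^-)\big)$; the degenerate case $x(s)=\mathfrak{c}_6$ is dealt with apart, where by the stated convention $\mathrm{D}_x f=f'$, $\mathrm{S}_x f=f$ and every identity is immediate. The only genuinely new inputs are a few closed formulas for $x(s)$ at shifted arguments, obtained by substituting \eqref{xs-def} and treating $q\neq1$ and $q=1$ separately: besides $\tfrac12\big(z^++z^-\big)=\alpha z+\beta$ (already recorded in the text) one needs $\big(z^+-z^-\big)^2=4\,\texttt{U}_2(z)$ and $x(s+1)-x(s)=\alpha\big(z^+-z^-\big)+2\,\texttt{U}_1(z)$, together with the companion relations $\mathrm{D}_x\texttt{U}_1=\alpha^2-1$, $\mathrm{S}_x\texttt{U}_1=\alpha\,\texttt{U}_1$, $\mathrm{D}_x\texttt{U}_2=2\alpha\,\texttt{U}_1$, $\mathrm{S}_x\texttt{U}_2=\alpha^2\texttt{U}_2+\texttt{U}_1^2$, each of which is read off directly from \eqref{U1-simples-bis}--\eqref{U2-simples-bis} and \eqref{alpha-beta}.

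With these at hand, \eqref{1.3} and \eqref{1.4} follow at once: expanding $f(z^{\pm})g(z^{\pm})$ and using $2(ab-cd)=(a-c)(b+d)+(a+c)(b-d)$ and $2(ab+cd)=(a-c)(b-d)+(a+c)(b+d)$ with $a=f(z^+)$, $c=f(z^-)$, $b=g(z^+)$, $d=g(z^-)$, one gets \eqref{1.3} after dividing by $z^+-z^-$, and \eqref{1.4} after halving and invoking $\big(z^+-z^-\big)^2=4\texttt{U}_2$. Carrying out the same bookkeeping at the three points $x(s)$, $x(s\pm1)$ --- or, equivalently, combining \eqref{1.3}--\eqref{1.4} with the functional relations for $\texttt{U}_1$ --- produces the two second-order identities $\mathrm{D}_x\mathrm{S}_x f=\alpha\,\mathrm{S}_x\mathrm{D}_x f+\texttt{U}_1\mathrm{D}_x^2 f$ and $\mathrm{S}_x^2 f=f+\texttt{U}_1\mathrm{S}_x\mathrm{D}_x f+\alpha\,\texttt{U}_2\mathrm{D}_x^2 f$; the first is the $n=1$ case of the penultimate identity of the statement.

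Next I would prove \eqref{PropNUL2} and \eqref{PropNUL1} by inserting \eqref{1.3}--\eqref{1.4} into their right-hand sides and collecting the coefficients of $\mathrm{S}_x g$ and $\mathrm{D}_x g$; after expanding $\mathrm{D}_x(\texttt{U}_1\mathrm{D}_x f)$ and $\mathrm{S}_x(\texttt{U}_1\mathrm{D}_x f)$ via \eqref{1.3}--\eqref{1.4} and the relations for $\texttt{U}_1$, the coefficient of $\mathrm{D}_x g$ in \eqref{PropNUL2} (resp. of $\mathrm{S}_x g$ in \eqref{PropNUL1}) vanishes exactly by the first second-order identity, while the remaining coefficient equals $f$ exactly by the second. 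Then \eqref{a} and \eqref{1} follow by transposition: by \eqref{PNUL-def-Dxu-Sxu} and $\langle\phi\mathbf{u},g\rangle=\langle\mathbf{u},\phi g\rangle$ one has $\langle\mathbf{D}_x(f\mathbf{u}),g\rangle=-\langle\mathbf{u},f\mathrm{D}_x g\rangle$, and substituting \eqref{PropNUL1} and moving the polynomial multipliers back onto $\mathbf{u}$ yields \eqref{a}; similarly $\langle\mathbf{S}_x(f\mathbf{u}),g\rangle=\langle\mathbf{u},f\mathrm{S}_x g\rangle$, and substituting \eqref{PropNUL2}, transposing, and finally expanding $\mathbf{D}_x(\texttt{U}_2\mathbf{u})$ by \eqref{a} (using $\mathrm{D}_x\texttt{U}_2=2\alpha\texttt{U}_1$ and $\mathrm{S}_x\texttt{U}_2=\alpha^2\texttt{U}_2+\texttt{U}_1^2$) gives \eqref{1} after simplification.

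Finally, the penultimate identity $\mathrm{D}_x^{n}\mathrm{S}_x f=\alpha_n\mathrm{S}_x\mathrm{D}_x^{n}f+\gamma_n\texttt{U}_1\mathrm{D}_x^{n+1}f$ I would prove by induction on $n$: the case $n=0$ is trivial since $\alpha_0=1$, $\gamma_0=0$; for the step, apply $\mathrm{D}_x$ to the $(n-1)$-th instance, use the first second-order identity on $\mathrm{D}_x^{n-1}f$ and \eqref{1.3} to rewrite $\mathrm{D}_x(\texttt{U}_1\mathrm{D}_x^{n}f)$, and then match coefficients using $\alpha_n=\alpha\alpha_{n-1}+(\alpha^2-1)\gamma_{n-1}$ and $\gamma_n=\alpha_{n-1}+\alpha\gamma_{n-1}$, both consequences of \eqref{1.2}--\eqref{1.2c} (with the conventions $\alpha_{-1}=\alpha$, $\gamma_{-1}=-1$). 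Identity \eqref{def Dx^nSx-u} is the dual statement; it follows either by an analogous induction built on \eqref{a} and \eqref{1}, or by transposing the primal identity, the one new ingredient being the dual Leibniz relation $\alpha\mathbf{D}_x(\texttt{U}_1\mathbf{u})=\texttt{U}_1\mathbf{D}_x\mathbf{u}+(\alpha^2-1)\mathbf{S}_x\mathbf{u}$ (a special case of \eqref{a}), which on iteration reconciles the index shift $\alpha_n\mapsto\alpha_{n+1}$ through the same recurrences. The hard part will be purely organizational: keeping the cases $q\neq1$ and $q=1$ (where $\texttt{U}_1$ degenerates from a degree-one polynomial to a constant) aligned, and threading the sequences $\alpha_n,\gamma_n$ and their shifts correctly through the two inductions; each separate computation is elementary.
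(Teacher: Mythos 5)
Your proposal is correct, and all the auxiliary identities you rely on check out against the definitions \eqref{xs-def}, \eqref{alpha-beta}, \eqref{U1-simples-bis}--\eqref{U2-simples-bis}: in particular $\big(x(s+\tfrac12)-x(s-\tfrac12)\big)^2=4\,\texttt{U}_2(z)$, the relations $\mathrm{D}_x\texttt{U}_1=\alpha^2-1$, $\mathrm{S}_x\texttt{U}_1=\alpha\texttt{U}_1$, $\mathrm{D}_x\texttt{U}_2=2\alpha\texttt{U}_1$, $\mathrm{S}_x\texttt{U}_2=\alpha^2\texttt{U}_2+\texttt{U}_1^2$, the two second-order identities, and the recurrences $\alpha_n=\alpha\alpha_{n-1}+(\alpha^2-1)\gamma_{n-1}$ and $\gamma_n=\alpha_{n-1}+\alpha\gamma_{n-1}$ all verify, and the coefficient-matching in \eqref{PropNUL2}--\eqref{PropNUL1} does produce $0$ and $f$ as claimed. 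The paper itself gives no proof of this proposition, deferring to the cited references, and your route (direct computation on the lattice, then transposition to $\mathcal{P}^*$, then induction) is essentially the standard one carried out there.
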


The following results can be proved by mathematical induction on $n$ (see alternatively \cite{KCDMJP2022classical}).
\begin{proposition}
For the lattice $\;x(s)=\mathfrak{c}_1q^{-s}+\mathfrak{c}_2q^s+\mathfrak{c}_3$, the following holds:
\begin{align}
\mathrm{D}_x z^n &=\gamma_n z^{n-1}+u_nz^{n-2}+v_nz^{n-3}+\cdots\;, \label{Dx-xn}\\
\mathrm{S}_x z^n &=\alpha_n z^n+\widehat{u}_nz^{n-1}+\widehat{v}_nz^{n-2}+\cdots\;, \label{Sx-xn}
\end{align}
for each $n=0,1,\ldots$, where $\alpha_n$ and $\gamma_n$ are given by \eqref{alpha-n} and \eqref{gamma-n}, and
\begin{align}
u_n &:=\big(n\gamma_{n-1}-(n-1)\gamma_n\big)\mathfrak{c}_3\;, \label{unD} \\
v_n &:=\big(n\gamma_{n-2}-(n-2)\gamma_n\big)\mathfrak{c}_1\mathfrak{c}_2 \label{vnD} \\
&\qquad+\mbox{$\frac12$}\big(n(n-1)\gamma_{n-2}-2n(n-2)\gamma_{n-1}+(n-1)(n-2)\gamma_{n}\big)\mathfrak{c}_3^2 \;, \nonumber\\
\widehat{u}_n &:=n(\alpha_{n-1}-\alpha_n)\mathfrak{c}_3\;, \label{unS} \\
\widehat{v}_n &:=n(\alpha_{n-2}-\alpha_n)\mathfrak{c}_1\mathfrak{c}_2
+n(n-1)(\alpha-1)\alpha_{n-1}\mathfrak{c}_3^2\;. \label{vnS}
\end{align}
\end{proposition}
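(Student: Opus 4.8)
The plan is to prove \eqref{Dx-xn}--\eqref{Sx-xn} by induction on $n$, using the Leibniz-type rules \eqref{1.3}--\eqref{1.4} applied to $z^{n+1}=z\cdot z^{n}$. Everything rests on three elementary facts, read off from Definition~\ref{def-DxSxNUL} together with \eqref{alpha-beta} and \eqref{U2-simples-bis}: $\mathrm{D}_x z=1$; $\mathrm{S}_x z=\tfrac12\big(x(s+\tfrac12)+x(s-\tfrac12)\big)=\alpha z+(1-\alpha)\mathfrak{c}_3$; and $\texttt{U}_2(z)=(\alpha^2-1)\big(z^2-2\mathfrak{c}_3 z+\mathfrak{c}_3^2-4\mathfrak{c}_1\mathfrak{c}_2\big)$. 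The cases $n=0,1,2$ are checked by hand and serve as the base of the induction: for instance $\mathrm{D}_x z=1=\gamma_1$ with $u_1=\gamma_0\mathfrak{c}_3=0$, and $\mathrm{S}_x z=\alpha z+(1-\alpha)\mathfrak{c}_3$ with $\alpha_1=\alpha$, $\widehat{u}_1=(\alpha_0-\alpha_1)\mathfrak{c}_3=(1-\alpha)\mathfrak{c}_3$.

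For the inductive step, assume \eqref{Dx-xn}--\eqref{Sx-xn} hold for all exponents $\le n$. From \eqref{1.3}--\eqref{1.4} and $\mathrm{D}_x z=1$ one gets
\[
\mathrm{D}_x z^{n+1}=\mathrm{S}_x z^{n}+\big(\alpha z+(1-\alpha)\mathfrak{c}_3\big)\mathrm{D}_x z^{n},\qquad
\mathrm{S}_x z^{n+1}=\texttt{U}_2\,\mathrm{D}_x z^{n}+\big(\alpha z+(1-\alpha)\mathfrak{c}_3\big)\mathrm{S}_x z^{n}.
\]
Substituting the induction hypothesis and extracting the coefficients of $z^{n},z^{n-1},z^{n-2}$ from the first identity and of $z^{n+1},z^{n},z^{n-1}$ from the second expresses each of $\gamma_{n+1},u_{n+1},v_{n+1}$ and $\alpha_{n+1},\widehat{u}_{n+1},\widehat{v}_{n+1}$ as an explicit combination of $\alpha_k,\gamma_k$ ($k\le n$) and of $1,\mathfrak{c}_3,\mathfrak{c}_3^2,\mathfrak{c}_1\mathfrak{c}_2$. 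The two leading cases are immediate from the relations $\alpha_n+\alpha\gamma_n=\gamma_{n+1}$ and $(\alpha^2-1)\gamma_n+\alpha\alpha_n=\alpha_{n+1}$, which follow readily from \eqref{alpha-n}--\eqref{gamma-n}; those for $u_{n+1},\widehat{u}_{n+1}$ are then closed using in addition $\alpha\gamma_n-\alpha_n=\gamma_{n-1}$, $\alpha_m=\alpha\gamma_m-\gamma_{m-1}$, the recurrences \eqref{1.2} and \eqref{1.2c}, and the companion identity $\gamma_{m+1}=2\alpha\gamma_m-\gamma_{m-1}$ (again a consequence of \eqref{alpha-n}--\eqref{gamma-n}).

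The genuinely laborious part, and the main obstacle, is the verification of the formulas for $v_{n+1}$ (the coefficient of $z^{n-2}$ in $\mathrm{D}_x z^{n+1}$) and for $\widehat{v}_{n+1}$ (the coefficient of $z^{n-1}$ in $\mathrm{S}_x z^{n+1}$): these receive contributions from all three top coefficients of each of $\mathrm{D}_x z^{n}$, $\mathrm{S}_x z^{n}$, as well as from $\texttt{U}_2$, and they mix $\mathfrak{c}_1\mathfrak{c}_2$ with $\mathfrak{c}_3^2$. I would split each such identity into its $\mathfrak{c}_1\mathfrak{c}_2$-part and its $\mathfrak{c}_3^2$-part and dispose of them separately; e.g. the $\mathfrak{c}_1\mathfrak{c}_2$-part of $v_{n+1}$ collapses, after using $\alpha_{n-2}+\alpha\gamma_{n-2}=\gamma_{n-1}$, $\gamma_{n+1}=2\alpha_n+\gamma_{n-1}$ and $\alpha_n+\gamma_{n-1}=\alpha\gamma_n$, to the identity $\gamma_n-\gamma_{n-2}=2\alpha_{n-1}$, and the remaining parts reduce in the same way to the basic $\alpha$- and $\gamma$-recurrences. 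As an independent check (or an alternative proof) one may bypass the induction using $x(s\pm\tfrac12)-\mathfrak{c}_3=\alpha(z-\mathfrak{c}_3)\pm\delta\,t$ with $\delta:=\tfrac12(q^{1/2}-q^{-1/2})$ and $t:=\mathfrak{c}_2 q^{s}-\mathfrak{c}_1 q^{-s}$, so that $t^2=(z-\mathfrak{c}_3)^2-4\mathfrak{c}_1\mathfrak{c}_2$ and $\delta^2=\alpha^2-1$; this yields the closed form
\[
\mathrm{D}_x(z-\mathfrak{c}_3)^{n}=\sum_{j\ge0}\binom{n}{2j+1}\alpha^{\,n-2j-1}(\alpha^2-1)^{j}(z-\mathfrak{c}_3)^{\,n-2j-1}\big((z-\mathfrak{c}_3)^2-4\mathfrak{c}_1\mathfrak{c}_2\big)^{j}
\]
and an analogous one for $\mathrm{S}_x$, and then expanding by the binomial theorem, re-expanding $(z-\mathfrak{c}_3)^m$ in powers of $z$, and resumming with the same $\alpha_n,\gamma_n$ identities reproduces \eqref{unD}--\eqref{vnS}.
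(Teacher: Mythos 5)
Your proof is correct and takes essentially the same route as the paper, which offers no details beyond the remark that the result "can be proved by mathematical induction on $n$": your induction on $n$ via the product rules \eqref{1.3}--\eqref{1.4} applied to $z^{n+1}=z\cdot z^n$, together with the identities $\gamma_{n+1}=\alpha_n+\alpha\gamma_n$, $\alpha_{n+1}=(\alpha^2-1)\gamma_n+\alpha\alpha_n$ and $\alpha\gamma_n-\alpha_n=\gamma_{n-1}$, does close all the coefficient recursions (I checked the $u_{n+1}$, $\widehat{u}_{n+1}$ and the $\mathfrak{c}_1\mathfrak{c}_2$-part of $v_{n+1}$ explicitly). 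The closed-form expansion via $x(s\pm\tfrac12)-\mathfrak{c}_3=\alpha(z-\mathfrak{c}_3)\pm\delta t$ is a nice independent verification but is not needed.
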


We introduce an operator $\mathrm{T}_{n,k}:\mathcal{P}\to\mathcal{P}$
($n=0,1,\ldots;\, k=0,1,\ldots, n$),
defined for each $f\in\mathcal{P}$ as follows: if $n=k=0$, set
\begin{align}\label{T00}
\mathrm{T}_{0,0}f &:=f\;;
\end{align}
and if $n\geq1$ and $0\leq k\leq n$, define recurrently
\begin{align}\label{Tnk}
\mathrm{T}_{n,k}f&:= \mathrm{S}_x \mathrm{T}_{n-1,k}f
-\frac{\gamma_{n-k}}{ \alpha_{n-k}}\texttt{U}_1 \mathrm{D}_x \mathrm{T}_{n-1,k}f
+\frac{1}{\alpha_{n+1-k}} \mathrm{D}_x \mathrm{T}_{n-1,k-1}f\,,
\end{align}
with the conventions $\mathrm{T}_{n,k}f:=0$ whenever $k>n$ or $k<0$. 
Note that
$$
\deg \mathrm{T}_{n,k}f\leq\deg f-k\;.
$$
We are ready to state the following

\begin{theorem}[Leibniz's formula]\label{Leibniz-rule-NUL}
Let ${\bf u}\in\mathcal{P}^*$ and $f\in\mathcal{P}$. Then
\begin{align}
\mathbf{D}_x ^n \big(f{\bf u}\big)
=\sum_{k=0}^{n} \mathrm{T}_{n,k}f\, \mathbf{D}_x^{n-k} \mathbf{S}_x^k {\bf u}
 \quad (n=0,1,\ldots),\label{leibnizfor-NUL}
\end{align}
where $\mathrm{T}_{n,k}f$ is a polynomial defined by \eqref{T00}--\eqref{Tnk}.
\end{theorem}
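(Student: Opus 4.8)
The plan is to proceed by induction on $n$, using the product rule \eqref{a} to treat a single application of $\mathbf{D}_x$ and the commutation relation \eqref{def Dx^nSx-u} to move an $\mathbf{S}_x$ past a power of $\mathbf{D}_x$. For $n=0$ the statement reduces to $f{\bf u}=\mathrm{T}_{0,0}f\,{\bf u}$, which is exactly \eqref{T00}. Assume \eqref{leibnizfor-NUL} holds for $n-1$. Applying $\mathbf{D}_x$ to both sides and invoking \eqref{a} on each summand, with $\mathrm{T}_{n-1,k}f$ in the role of $f$ and $\mathbf{D}_x^{n-1-k}\mathbf{S}_x^k{\bf u}$ in the role of ${\bf u}$, gives
\begin{align*}
\mathbf{D}_x^n(f{\bf u})=\sum_{k=0}^{n-1}\Big[\big(\mathrm{S}_x\mathrm{T}_{n-1,k}f-\alpha^{-1}\texttt{U}_1\mathrm{D}_x\mathrm{T}_{n-1,k}f\big)\mathbf{D}_x^{n-k}\mathbf{S}_x^k{\bf u}+\alpha^{-1}\big(\mathrm{D}_x\mathrm{T}_{n-1,k}f\big)\mathbf{S}_x\mathbf{D}_x^{n-1-k}\mathbf{S}_x^k{\bf u}\Big].
\end{align*}

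The crucial point is that $\mathbf{S}_x$ and $\mathbf{D}_x$ do not commute on $\mathcal{P}^*$, so the functional $\mathbf{S}_x\mathbf{D}_x^{n-1-k}\mathbf{S}_x^k{\bf u}$ appearing in the last term has to be rewritten. Applying \eqref{def Dx^nSx-u} with $n-1-k$ in place of $n$ and $\mathbf{S}_x^k{\bf u}$ in place of ${\bf u}$ yields
\begin{align*}
\mathbf{S}_x\mathbf{D}_x^{n-1-k}\mathbf{S}_x^k{\bf u}=\frac{\alpha}{\alpha_{n-k}}\mathbf{D}_x^{n-1-k}\mathbf{S}_x^{k+1}{\bf u}-\frac{\gamma_{n-1-k}}{\alpha_{n-k}}\texttt{U}_1\mathbf{D}_x^{n-k}\mathbf{S}_x^k{\bf u}.
\end{align*}
Substituting this back and shifting the index $k\mapsto k-1$ in the part that carries $\mathbf{D}_x^{n-1-k}\mathbf{S}_x^{k+1}{\bf u}$, the right-hand side collapses to $\sum_{k=0}^n c_{n,k}\,\mathbf{D}_x^{n-k}\mathbf{S}_x^k{\bf u}$ with
\begin{align*}
c_{n,k}=\mathrm{S}_x\mathrm{T}_{n-1,k}f-\alpha^{-1}\Big(1+\frac{\gamma_{n-1-k}}{\alpha_{n-k}}\Big)\texttt{U}_1\mathrm{D}_x\mathrm{T}_{n-1,k}f+\frac{1}{\alpha_{n+1-k}}\mathrm{D}_x\mathrm{T}_{n-1,k-1}f,
\end{align*}
this being valid uniformly for $0\le k\le n$ once one uses the conventions $\mathrm{T}_{n-1,j}f=0$ for $j<0$ and $j>n-1$, which make the extreme terms $k=0$ and $k=n$ come out correctly.

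It remains to recognize $c_{n,k}$ as $\mathrm{T}_{n,k}f$. Comparing with the recursion \eqref{Tnk}, the two coincide precisely when $\alpha^{-1}\big(1+\gamma_{n-1-k}/\alpha_{n-k}\big)=\gamma_{n-k}/\alpha_{n-k}$, i.e. when $\alpha\,\gamma_m=\alpha_m+\gamma_{m-1}$ with $m=n-k$. This single scalar identity is the only genuinely computational ingredient; it follows at once from the closed forms \eqref{alpha-n} and \eqref{gamma-n} (equivalently from \eqref{1.2c} together with the fact, derivable from \eqref{1.2}, that $(\gamma_n)$ satisfies the same three-term recurrence $y_{n+1}-2\alpha y_n+y_{n-1}=0$ as $(\alpha_n)$), and it is compatible with the conventions $\alpha_{-1}=\alpha$, $\gamma_{-1}=-1$ at $m=0$. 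Finally, the estimate $\deg\mathrm{T}_{n,k}f\le\deg f-k$ propagates automatically along \eqref{Tnk} from the degree lowering of $\mathrm{D}_x$ and the degree preservation of $\mathrm{S}_x$. This closes the induction.

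The main obstacle, and essentially the only place where something nontrivial happens, is the non-commutation of $\mathbf{S}_x$ and $\mathbf{D}_x$: one must push the single average operator produced by the product rule \eqref{a} through the block $\mathbf{D}_x^{n-1-k}$, and it is exactly this manoeuvre — carried out by \eqref{def Dx^nSx-u} — that reorganizes the sum onto the family $\{\mathbf{D}_x^{n-k}\mathbf{S}_x^k{\bf u}\}_{0\le k\le n}$ and dictates the precise coefficients $\gamma_{n-k}/\alpha_{n-k}$ and $1/\alpha_{n+1-k}$ occurring in \eqref{Tnk}. Everything else is careful bookkeeping of the summation indices and the verification of the elementary identity $\alpha\gamma_m=\alpha_m+\gamma_{m-1}$.
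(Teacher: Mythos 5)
Your proof is correct and follows the approach the paper intends (the paper states the theorem without giving the argument, deferring to the cited reference): induction on $n$, applying the product rule \eqref{a} to each summand, pushing the resulting $\mathbf{S}_x$ through $\mathbf{D}_x^{n-1-k}$ via \eqref{def Dx^nSx-u}, shifting the index, and identifying the coefficients with the recursion \eqref{Tnk} through the identity $\alpha\gamma_m=\alpha_m+\gamma_{m-1}$, which you verify correctly (and which is consistent with the conventions $\alpha_{-1}=\alpha$, $\gamma_{-1}=-1$). The bookkeeping at the extreme indices $k=0$ and $k=n$ is handled properly by the conventions $\mathrm{T}_{n-1,j}f=0$ for $j<0$ or $j>n-1$, so there are no gaps.
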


\begin{corollary}\label{LeibnizCor1}
Consider the lattice $x(s):=\mathfrak{c}_1 q^{-s} +\mathfrak{c}_2 q^s +\mathfrak{c}_3$. 
Let ${\bf u}\in\mathcal{P}^*$ and $f\in\mathcal{P}_2$. Write $f(z)=az^2+bz+c\,,$ with $a,b,c\in\mathbb{C}$. Then
\begin{align}\label{leibnizfor-degree-pi-2}
\mathbf{D}_x ^n (f{\bf u})
&=\left(\frac{a\alpha}{\alpha_n\alpha_{n-1}}\,(z-\mathfrak{c}_3)^2
+\frac{f'(\mathfrak{c}_3)}{\alpha_n}(z-\mathfrak{c}_3) +f(\mathfrak{c}_3)+\frac{4a(1-\alpha^2)\gamma_n \mathfrak{c}_1\mathfrak{c}_2}{\alpha_{n-1}}\right) \mathbf{D}_x^n{\bf u}  \\
&\quad+\frac{\gamma_n}{\alpha_n}\left( \frac{a(\alpha_n +\alpha \alpha_{n-1}) }{\alpha_{n-1}^2}\,(z-\mathfrak{c}_3)
+f'(c_3)\right) \mathbf{D}_x^{n-1}\mathbf{S}_x{\bf u}\nonumber \\
&\quad+\frac{a\gamma_n\gamma_{n-1}}{\alpha_{n-1}^2}\,\mathbf{D}_x^{n-2}\mathbf{S}_x^2{\bf u}  \nonumber\;.
\end{align}

\end{corollary}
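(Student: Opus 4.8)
The plan is to deduce this from Leibniz's formula (Theorem~\ref{Leibniz-rule-NUL}). Since $f\in\mathcal{P}_2$ we have $\deg\mathrm{T}_{n,k}f\le 2-k$, hence $\mathrm{T}_{n,k}f=0$ for $k\ge3$, and \eqref{leibnizfor-NUL} collapses to
\[
\mathbf{D}_x^{n}(f{\bf u})=\big(\mathrm{T}_{n,0}f\big)\mathbf{D}_x^{n}{\bf u}+\big(\mathrm{T}_{n,1}f\big)\mathbf{D}_x^{n-1}\mathbf{S}_x{\bf u}+\big(\mathrm{T}_{n,2}f\big)\mathbf{D}_x^{n-2}\mathbf{S}_x^{2}{\bf u}\qquad(n\ge0),
\]
where, by the conventions in \eqref{T00}--\eqref{Tnk}, the last two terms are harmless when $n\le1$ (their coefficients turn out to be multiples of $\gamma_0=0$ and of $\gamma_1\gamma_0=0$, respectively). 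Thus everything reduces to showing that $\mathrm{T}_{n,0}f$, $\mathrm{T}_{n,1}f$, $\mathrm{T}_{n,2}f$ coincide with the three polynomial coefficients displayed in \eqref{leibnizfor-degree-pi-2}.

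Next I would reduce to the case $\mathfrak{c}_3=0$. Writing $y(s):=\mathfrak{c}_1q^{-s}+\mathfrak{c}_2q^{s}$ and $g(t):=f(t+\mathfrak{c}_3)$, one checks directly from \eqref{AWxsG}--\eqref{AverOperG} that $\mathrm{D}_xf$ and $\mathrm{S}_xf$, viewed as polynomials in $z=x(s)$, equal $\mathrm{D}_yg$ and $\mathrm{S}_yg$, viewed as polynomials in $t=y(s)$, after the substitution $t=z-\mathfrak{c}_3$; moreover $\texttt{U}_1$ for the lattice $x$ becomes $\texttt{U}_1$ for the lattice $y$ (cf.\ \eqref{U1-simples-bis}). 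Plugging this into the recursion \eqref{Tnk} shows that $\mathrm{T}_{n,k}f$, evaluated at $z$, equals $\mathrm{T}_{n,k}g$ (for the lattice $y$), evaluated at $z-\mathfrak{c}_3$. Since $g(0)=f(\mathfrak{c}_3)$ and $g'(0)=f'(\mathfrak{c}_3)$, the general statement follows from the case $\mathfrak{c}_3=0$ by the substitution $z\mapsto z-\mathfrak{c}_3$. From now on $\mathfrak{c}_3=0$, so $z=\mathfrak{c}_1q^{-s}+\mathfrak{c}_2q^{s}$ and $\texttt{U}_1(z)=(\alpha^2-1)z$; and by linearity of each $\mathrm{T}_{n,k}$ in $f$ it is enough to treat $f(z)\in\{1,z,z^2\}$. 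For $f\equiv1$, an immediate induction on $n$ via \eqref{Tnk} gives $\mathrm{T}_{n,0}1=1$ and $\mathrm{T}_{n,k}1=0$ for $k\ge1$, which is \eqref{leibnizfor-degree-pi-2} with $a=b=0$, $c=1$.

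For $f(z)=z$ and $f(z)=z^2$ the plan is to argue by induction on $n$ from \eqref{T00}--\eqref{Tnk}, using $\mathrm{D}_xz=1$, $\mathrm{S}_xz=\alpha z$ and, from \eqref{Dx-xn}--\eqref{Sx-xn} with $\mathfrak{c}_3=0$ and the values $\gamma_2=2\alpha$, $\alpha_2=2\alpha^2-1$, the formulas $\mathrm{D}_xz^2=2\alpha z$ and $\mathrm{S}_xz^2=\alpha_2 z^2+4(1-\alpha^2)\mathfrak{c}_1\mathfrak{c}_2$. What has to be verified is that
\[
\mathrm{T}_{n,0}z=\frac{z}{\alpha_n},\qquad \mathrm{T}_{n,1}z=\frac{\gamma_n}{\alpha_n},\qquad \mathrm{T}_{n,k}z=0\quad(k\ge2),
\]
and
\[
\mathrm{T}_{n,0}z^2=\frac{\alpha z^2}{\alpha_n\alpha_{n-1}}+\frac{4(1-\alpha^2)\gamma_n\mathfrak{c}_1\mathfrak{c}_2}{\alpha_{n-1}},\qquad \mathrm{T}_{n,1}z^2=\frac{\gamma_n(\alpha_n+\alpha\alpha_{n-1})}{\alpha_n\alpha_{n-1}^2}\,z,\qquad \mathrm{T}_{n,2}z^2=\frac{\gamma_n\gamma_{n-1}}{\alpha_{n-1}^2},
\]
both true for $n=0$ since $\gamma_0=0$, $\alpha_0=1$, $\alpha_{-1}=\alpha$. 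Substituting these into the collapsed Leibniz formula above and undoing the reduction $\mathfrak{c}_3=0$ produces exactly the three coefficients of \eqref{leibnizfor-degree-pi-2}: the $f\equiv1$ piece supplies the term $f(\mathfrak{c}_3)\,\mathbf{D}_x^{n}{\bf u}$, the $f(z)=z-\mathfrak{c}_3$ piece supplies the contributions proportional to $f'(\mathfrak{c}_3)$, and the $f(z)=(z-\mathfrak{c}_3)^2$ piece supplies those proportional to $a$.

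\textbf{Main obstacle.} The delicate (though routine) part is the induction step for $f(z)=z^2$: after inserting the trial formulas for $\mathrm{T}_{n-1,k}z^2$ into \eqref{Tnk} and applying $\mathrm{D}_x$, $\mathrm{S}_x$ and multiplication by $\texttt{U}_1$, one must recognise the resulting rational combinations of the $\alpha_j,\gamma_j$ as the claimed ones. This comes down to identities such as $\gamma_m(\alpha^2-1)=\alpha\alpha_m-\alpha_{m-1}$, $\alpha_2\alpha_n-2\alpha(\alpha^2-1)\gamma_n=\alpha_{n-2}$, $\gamma_n\alpha_{n-1}-\gamma_{n-1}\alpha_n=1$, $\gamma_n\alpha_{n-2}-\gamma_{n-1}\alpha_{n-1}=\alpha$, together with the two companion relations $\gamma_{n-1}\alpha_n(\alpha_{n-1}+\alpha\alpha_{n-2})+2\alpha^2\alpha_{n-1}=\gamma_n\alpha_{n-2}(\alpha_n+\alpha\alpha_{n-1})$ and $\gamma_{n-2}\alpha_{n-1}^2+\alpha_{n-1}+\alpha\alpha_{n-2}=\gamma_n\alpha_{n-2}^2$, each of which is immediate from the closed forms \eqref{alpha-n} and \eqref{gamma-n} (equivalently, from the three-term recurrences \eqref{1.2}--\eqref{1.2c}). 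These identities are precisely what pins down the constants $\alpha/(\alpha_n\alpha_{n-1})$, $4(1-\alpha^2)\gamma_n\mathfrak{c}_1\mathfrak{c}_2/\alpha_{n-1}$, $\gamma_n(\alpha_n+\alpha\alpha_{n-1})/(\alpha_n\alpha_{n-1}^2)$ and $\gamma_n\gamma_{n-1}/\alpha_{n-1}^2$; once they are in place, the corollary follows.
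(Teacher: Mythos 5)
Your proposal is correct and follows essentially the route the paper intends for Corollary~\ref{LeibnizCor1}: specialize the Leibniz formula of Theorem~\ref{Leibniz-rule-NUL} to $f\in\mathcal{P}_2$, where $\mathrm{T}_{n,k}f=0$ for $k\geq 3$, and identify $\mathrm{T}_{n,0}f$, $\mathrm{T}_{n,1}f$, $\mathrm{T}_{n,2}f$ by induction from \eqref{T00}--\eqref{Tnk}; the closed forms you state and the $\alpha_n,\gamma_n$ identities you invoke (all consequences of \eqref{alpha-n}, \eqref{gamma-n} and \eqref{1.2}--\eqref{1.2c}) check out, including the degenerate cases $n=0,1$ where the extra terms carry vanishing coefficients. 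The preliminary translation to $\mathfrak{c}_3=0$ is only a cosmetic simplification, not a genuinely different method.
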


\section{Classical OPS on lattices}\label{SecRodrigues}

This section concerns classical OPS on lattices and their associated regular functionals. 
We start by reviewing some basic definitions and needed properties. 

\begin{definition}\label{NUL-def}
Let $x(s)$ be a lattice given by \eqref{xs-def}. 
${\bf u}\in\mathcal{P}^*$ is called $x-$classical if it is regular and there exist nonzero polynomials
$\phi\in\mathcal{P}_2$ and $\psi\in\mathcal{P}_1$
such that
\begin{equation}\label{NUL-Pearson}
\mathbf{D}_x(\phi{\bf u})=\mathbf{S}_x(\psi{\bf u})\;.
\end{equation}
An OPS with respect to a $x-$classical functional will be called a $x-$classical OPS
(or a classical OPS on the lattice $x$).
\end{definition}
Definition \ref{NUL-def} appears in \cite{FK-NM2011}. We will refer to (\ref{NUL-Pearson}) as {\it $x-$Geronimus--Pearson functional equation on the lattice $x$,} or, simply, {\it $x-$GP functional equation}. Our principal goal is to state necessary and sufficient conditions, involving only $\phi$ and $\psi$, such that a given functional ${\bf u}\in\mathcal{P}^*$ satisfying the
$x-$GP functional equation (\ref{NUL-Pearson}) becomes regular.
In order to move on, we need to introduce notations and to state some preliminary needed properties.

We denote by $P_n ^{[k]}$ the monic polynomial of degree $n$ defined by
\begin{align}
P_n ^{[k]} (z):=\frac{D_x ^k P_{n+k} (z)}{ \prod_{j=1} ^k \gamma_{n+j}} =\frac{\gamma_{n} !}{\gamma_{n+k} !} D_x ^k P_{n+k} (z) \quad (k,n=0,1,\ldots)\;. \label{Pnkx}
\end{align}
Here, as usual, it is understood that $\mathrm{D}_x ^0 f=f $, empty product equals one, and 
$$\gamma_0 !:=1\;,\quad \gamma_{n+1} !:=\gamma_1 ...\gamma_n \gamma_{n+1} \quad(n=0,1,\ldots)\;.$$
\begin{definition}
Let $\phi\in\mathcal{P}_2$ and $\psi\in\mathcal{P}_1$.
$(\phi,\psi)$ is called an $x-$admissible pair if
$$
d_n\equiv d_n(\phi,\psi,x):=\mbox{$\frac12$}\,\gamma_n\,\phi^{\prime\prime}+\alpha_n\psi'\neq0 \quad (n=0,1,\ldots)\;.
$$
\end{definition}
This is an analogous for lattices of the corresponding definitions for the $\mathrm{D}-$classical and $(q,\omega)-$classical  cases (cf. \cite{M1991,MP1994,RKDP2020}).

Following \cite{FK-NM2011}, given ${\bf u}\in\mathcal{P}^*$, $\phi\in\mathcal{P}_2$, and $\psi\in\mathcal{P}_1$,
we define recursively polynomials $\phi^{[k]}\in\mathcal{P}_2$ and $\psi^{[k]}\in\mathcal{P}_1$ by
\begin{align}
& \phi^{[0]}:=\phi\;,\quad \psi^{[0]}:=\psi\;, \label{def-phi0psi0}\\
& \phi^{[k+1]}:=\mathrm{S}_x\phi^{[k]}+\texttt{U}_1\mathrm{S}_x\psi^{[k]}+\alpha\texttt{U}_2\mathrm{D}_x\psi^{[k]}\;, \label{def-phik}\\
& \psi^{[k+1]}:=\mathrm{D}_x\phi^{[k]}+\alpha\mathrm{S}_x\psi^{[k]}+\texttt{U}_1\mathrm{D}_x\psi^{[k]}\;, \label{def-psik}
\end{align}
and functionals ${\bf u}^{[k]}\in\mathcal{P}^*$ by
\begin{equation}\label{uk-func-Dx}
{\bf u}^{[0]}:={\bf u}\;,\quad
{\bf u}^{[k+1]}:=\mathbf{D}_x\big(\texttt{U}_2\psi^{[k]}{\bf u}^{[k]}\big)-\mathbf{S}_x\big(\phi^{[k]}{\bf u}^{[k]}\big)
\end{equation}
($k=0,1,\ldots$).
${\bf u}^{[k]}$ may be seen as the higher order $x-$derivative of ${\bf u}$.
Next, we provide explicit representations for the polynomials $\phi^{[k]}$ and $\psi^{[k]}$.

\begin{proposition}
Consider the lattice $x(s):=\mathfrak{c}_1 q^{-s} +\mathfrak{c}_2 q^s +\mathfrak{c}_3$.
Let $\phi\in\mathcal{P}_2$ and $\psi\in\mathcal{P}_1$, so there are $a,b,c,d,e\in\mathbb{C}$ such that
$$\phi(z)=az^2+bz+c\;,\quad \psi(z)=dz+e\,.$$ 
Then the polynomials $\phi^{[k]}$ and $\psi^{[k]}$
defined by \eqref{def-phi0psi0}--\eqref{def-psik} are given by 
\begin{align}
\psi^{[k]}(z)&=\big( a\gamma_{2k}+d\alpha_{2k} \big)(z-\mathfrak{c}_3)+\phi'(\mathfrak{c}_3)\gamma_k+\psi(\mathfrak{c}_3)\alpha_k\;, \label{psi-explicit}\\
\phi^{[k]}(z)&=\big(d(\alpha^2-1)\gamma_{2k}+a\alpha_{2k}\big)
\big((z-\mathfrak{c}_3)^2-2\mathfrak{c}_1\mathfrak{c}_2\big)  \label{phi-explicit} \\
&\quad +\big(\phi'(\mathfrak{c}_3)\alpha_k+\psi(\mathfrak{c}_3)(\alpha^2-1)\gamma_k\big)(z-\mathfrak{c}_3)
+ \phi(\mathfrak{c}_3)+2a\mathfrak{c}_1\mathfrak{c}_2, \nonumber
\end{align}
for each $k=0,1,2\ldots$.
\end{proposition}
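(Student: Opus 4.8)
The plan is to argue by induction on $k$, after changing to a basis of $\mathcal{P}_2$ in which $\mathrm{D}_x$ and $\mathrm{S}_x$ act as simply as possible. First I would set $w:=z-\mathfrak{c}_3$ and $W:=(z-\mathfrak{c}_3)^2-2\mathfrak{c}_1\mathfrak{c}_2$, so that $w^2=W+2\mathfrak{c}_1\mathfrak{c}_2$ and, by \eqref{U1-simples-bis}--\eqref{U2-simples-bis}, $\texttt{U}_1=(\alpha^2-1)w$ and $\texttt{U}_2=(\alpha^2-1)(W-2\mathfrak{c}_1\mathfrak{c}_2)$. Using \eqref{1.3}--\eqref{1.4} (or, equivalently, \eqref{Dx-xn}--\eqref{Sx-xn}) one records the elementary identities
\[
\mathrm{D}_x1=0,\quad \mathrm{S}_x1=1,\quad \mathrm{D}_xw=1,\quad \mathrm{S}_xw=\alpha w,\quad \mathrm{D}_xW=\gamma_2 w,\quad \mathrm{S}_xW=\alpha_2 W,
\]
with $\gamma_2=2\alpha$ and $\alpha_2=2\alpha^2-1$. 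Thus, on the span $\{1,w,W\}$, $\mathrm{S}_x$ acts diagonally and $\mathrm{D}_x$ is nilpotent and degree‑lowering; this is what renders the recursion \eqref{def-phik}--\eqref{def-psik} tractable.

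Next I would rewrite the asserted formulas in this basis as $\psi^{[k]}=p_kw+q_k$ and $\phi^{[k]}=A_kW+B_kw+C_k$, where $p_k:=a\gamma_{2k}+d\alpha_{2k}$, $q_k:=\phi'(\mathfrak{c}_3)\gamma_k+\psi(\mathfrak{c}_3)\alpha_k$, $A_k:=d(\alpha^2-1)\gamma_{2k}+a\alpha_{2k}$, $B_k:=\phi'(\mathfrak{c}_3)\alpha_k+\psi(\mathfrak{c}_3)(\alpha^2-1)\gamma_k$, and $C_k:=\phi(\mathfrak{c}_3)+2a\mathfrak{c}_1\mathfrak{c}_2$. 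The base case $k=0$ is immediate: since $\alpha_0=1$ and $\gamma_0=0$ these reduce to $\psi^{[0]}=dw+\psi(\mathfrak{c}_3)$ and $\phi^{[0]}=aw^2+\phi'(\mathfrak{c}_3)w+\phi(\mathfrak{c}_3)$, which are $\psi$ and $\phi$ after a Taylor expansion about $\mathfrak{c}_3$.

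For the inductive step I would substitute $\psi^{[k]}=p_kw+q_k$ and $\phi^{[k]}=A_kW+B_kw+C_k$ into \eqref{def-phik}--\eqref{def-psik}, apply the six identities above (using $w^2=W+2\mathfrak{c}_1\mathfrak{c}_2$ to re-expand the term $\texttt{U}_1\mathrm{S}_x\psi^{[k]}$), and collect coefficients of $1,w,W$. This produces the scalar recursions
\begin{align*}
p_{k+1}&=\gamma_2A_k+\alpha_2p_k, & q_{k+1}&=B_k+\alpha q_k,\\
A_{k+1}&=\alpha_2A_k+\gamma_2(\alpha^2-1)p_k, & B_{k+1}&=\alpha B_k+(\alpha^2-1)q_k,
\end{align*}
together with $C_{k+1}=C_k$. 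It remains to check that these recursions reproduce the claimed closed forms, for which I would use the addition formulas $\alpha_{m+n}=\alpha_m\alpha_n+(\alpha^2-1)\gamma_m\gamma_n$ and $\gamma_{m+n}=\alpha_m\gamma_n+\gamma_m\alpha_n$, both immediate from \eqref{alpha-n}--\eqref{gamma-n} on setting $q^{1/2}=t$. With $n=1$ they give $\alpha_{k+1}=\alpha\alpha_k+(\alpha^2-1)\gamma_k$ and $\gamma_{k+1}=\alpha\gamma_k+\alpha_k$, turning the recursions for $q_k,B_k$ into the desired expressions; with $n=2$ (and $\gamma_2=2\alpha$, $\alpha_2=2\alpha^2-1$) they give $\gamma_{2k+2}=\alpha_2\gamma_{2k}+\gamma_2\alpha_{2k}$ and $\alpha_{2k+2}=\alpha_2\alpha_{2k}+\gamma_2(\alpha^2-1)\gamma_{2k}$, doing the same for $p_k,A_k$. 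Since $C_k$ is manifestly constant in $k$ and has the claimed value at $k=0$, the induction closes.

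There is no conceptual difficulty here once the basis $\{1,w,W\}$ is chosen; the argument is essentially bookkeeping. The one spot demanding a little care is the constant coefficient of $\phi^{[k+1]}$: the contribution $2\mathfrak{c}_1\mathfrak{c}_2(\alpha^2-1)\alpha\,p_k$ arising from $\texttt{U}_1\mathrm{S}_x\psi^{[k]}$ (through $w^2=W+2\mathfrak{c}_1\mathfrak{c}_2$) must cancel exactly against $-2\mathfrak{c}_1\mathfrak{c}_2\alpha(\alpha^2-1)p_k$ coming from $\alpha\texttt{U}_2\mathrm{D}_x\psi^{[k]}$, which is precisely what yields $C_{k+1}=C_k$; one should also keep the $a$- and $d$-parts of $p_k$ and $A_k$ separate throughout.
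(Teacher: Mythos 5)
Your proposal is correct and takes essentially the same route as the paper, whose proof is simply stated as induction on the index; you carry out exactly that induction, with the convenient bookkeeping device of the basis $\{1,\,z-\mathfrak{c}_3,\,(z-\mathfrak{c}_3)^2-2\mathfrak{c}_1\mathfrak{c}_2\}$ on which $\mathrm{S}_x$ acts diagonally, and the scalar recursions you derive close via the addition formulas $\alpha_{m+n}=\alpha_m\alpha_n+(\alpha^2-1)\gamma_m\gamma_n$ and $\gamma_{m+n}=\alpha_m\gamma_n+\gamma_m\alpha_n$, as checked.
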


\begin{proof}
By mathematical induction on $n$.
\end{proof}

\begin{theorem}\label{x-admissible}\cite{KCDMJP2022classical}
Let ${\bf u}\in\mathcal{P}^*$. Suppose that ${\bf u}$ is regular and satisfies $(\ref{NUL-Pearson})$, 
where $(\phi,\psi)\in\mathcal{P}_2\times\mathcal{P}_1 \setminus \{(0,0)\}$.
Then $(\phi,\psi)$ is a $x-$admissible pair and ${\bf u}^{[k]}$ is regular for each $k=1,2,\ldots$.
Moreover, if $(P_n)_{n\geq0}$ is the monic OPS with respect to ${\bf u}$, then
$\big(P_n^{[k]}\big)_{n\geq0}$ is the monic OPS with respect to ${\bf u}^{[k]}$.
\end{theorem}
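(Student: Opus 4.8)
The plan is to prove the theorem by induction on $k$, the heart of the matter being the combination of the case $k=1$ with the admissibility assertion; the inductive step then amounts to applying the case $k=1$ to $\mathbf{u}^{[1]}$ in place of $\mathbf{u}$. I will use throughout the equivalent form of the $x$-GP equation obtained by pairing \eqref{NUL-Pearson} with an arbitrary $f\in\mathcal{P}$ and invoking \eqref{PNUL-def-Dxu-Sxu}: $\langle\mathbf{u},\phi\,\mathrm{D}_x f+\psi\,\mathrm{S}_x f\rangle=0$ for all $f\in\mathcal{P}$; call this $(\star)$. I will also use that, since $q>0$, $\alpha_n>0$ and $\gamma_n\neq0$ for all $n\geq1$ — hence $\mathrm{S}_x$ is a bijection of $\mathcal{P}$ and $\mathrm{D}_x\mathcal{P}=\mathcal{P}$ — together with the standard fact that a regular functional is annihilated by no nonzero polynomial.

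\emph{Step 1 (formal orthogonality of $(P_n^{[1]})$).} From \eqref{uk-func-Dx}, $\langle\mathbf{u}^{[1]},g\rangle=-\langle\mathbf{u},\texttt{U}_2\psi\,\mathrm{D}_xg+\phi\,\mathrm{S}_xg\rangle$, and $P_n^{[1]}=\mathrm{D}_xP_{n+1}/\gamma_{n+1}$ by \eqref{Pnkx}. I would expand $\langle\mathbf{u}^{[1]},P_n^{[1]}\pi\rangle$, for $\pi\in\mathcal{P}_{n-1}$, by means of the product rules \eqref{1.3}--\eqref{1.4} and the transfer identities \eqref{PropNUL2} and \eqref{PropNUL1}, and then — after repeated use of $(\star)$ — reduce it to a finite linear combination of moments $\langle\mathbf{u},\theta\,P_{n+1}\rangle$ with $\deg\theta\leq n$, all of which vanish by the orthogonality of $(P_n)$ with respect to $\mathbf{u}$; retaining, in the same computation, the single surviving term when $\deg\pi=n$ produces $\langle\mathbf{u}^{[1]},(P_n^{[1]})^2\rangle=\kappa_n\,d_n(\phi,\psi,x)\,h_{n+1}$ with an explicit constant $\kappa_n\neq0$. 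Thus $(P_n^{[1]})$ is a \emph{formal} OPS for $\mathbf{u}^{[1]}$ — the cross terms vanish unconditionally — which is a genuine monic OPS precisely when $d_n(\phi,\psi,x)\neq0$ for all $n$.

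\emph{Step 2 (admissibility).} Suppose $d_N=0$ with $N$ minimal. If $\psi'=0$ (i.e. $N=0$, since $d_0=\psi'$): when $\psi$ is a nonzero constant, $(\star)$ with $f\equiv1$ gives $\psi\,h_0=0$, so $h_0=0$; when $\psi\equiv0$, then $(\star)$ together with $\mathrm{D}_x\mathcal{P}=\mathcal{P}$ gives $\phi\mathbf{u}=0$ with $\phi\neq0$; either way $\mathbf{u}$ is not regular, a contradiction. Hence $\psi'\neq0$ and $N\geq1$, and since $\alpha_N>0$ and $\gamma_N\neq0$ the relation $\tfrac12\gamma_N\phi''+\alpha_N\psi'=0$ forces $\deg\phi=2$. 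By Step 1, $\langle\mathbf{u}^{[1]},P_N^{[1]}P_m^{[1]}\rangle=0$ for all $m\neq N$, while $\langle\mathbf{u}^{[1]},(P_N^{[1]})^2\rangle=\kappa_N d_N h_{N+1}=0$; since $(P_m^{[1]})_{m\geq0}$ is a basis of $\mathcal{P}$, this means $P_N^{[1]}\mathbf{u}^{[1]}=0$. I would then translate $P_N^{[1]}\mathbf{u}^{[1]}=0$ into a statement about $\mathbf{u}$ — using \eqref{uk-func-Dx}, the ``dual'' product rules $f\mathbf{D}_x\mathbf{v}=\mathbf{D}_x((\mathrm{S}_xf)\mathbf{v})-\mathbf{S}_x((\mathrm{D}_xf)\mathbf{v})$ and $f\mathbf{S}_x\mathbf{v}=\mathbf{S}_x((\mathrm{S}_xf)\mathbf{v})-\mathbf{D}_x((\texttt{U}_2\mathrm{D}_xf)\mathbf{v})$ (immediate from \eqref{1.3}--\eqref{1.4} and \eqref{PNUL-def-Dxu-Sxu}), and the identities of Proposition~\ref{propertyDxSx} — and show that it forces a nonzero polynomial to annihilate $\mathbf{u}$ (equivalently, a functional rigidly determined by $\mathbf{u}$ through the bijectivity of $\mathrm{S}_x$), contradicting regularity. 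This parallels the admissibility arguments in the $\mathrm{D}$- and $(q,\omega)$-classical theories, and I expect this last ``translation'' to be the main obstacle: carrying it out cleanly on a general lattice needs some care with the polynomial identities of Proposition~\ref{propertyDxSx}.

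\emph{Step 3 (induction).} Granting Step 2, Step 1 gives the case $k=1$. For the step $k\to k+1$, assuming $\mathbf{u}^{[k]}$ regular with monic OPS $(P_n^{[k]})$, one first checks, by induction from \eqref{def-phi0psi0}--\eqref{uk-func-Dx} with the rules \eqref{a} and \eqref{1} and the $x$-GP equation at level $k$, that $\mathbf{D}_x(\phi^{[k]}\mathbf{u}^{[k]})=\mathbf{S}_x(\psi^{[k]}\mathbf{u}^{[k]})$; the recursions \eqref{def-phik}--\eqref{def-psik} also give $\phi^{[k]}\in\mathcal{P}_2$, $\psi^{[k]}\in\mathcal{P}_1$, and $(\phi^{[k]})^{[1]}=\phi^{[k+1]}$, $(\psi^{[k]})^{[1]}=\psi^{[k+1]}$. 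Next, the addition formulas $\gamma_{n+m}=\gamma_n\alpha_m+\alpha_n\gamma_m$ and $\alpha_{n+m}=\alpha_n\alpha_m+(\alpha^2-1)\gamma_n\gamma_m$ (easily deduced from \eqref{1.2}--\eqref{1.2c}) yield, via the closed forms \eqref{psi-explicit}--\eqref{phi-explicit}, the identity $d_n(\phi^{[k]},\psi^{[k]},x)=d_{n+2k}(\phi,\psi,x)$, which is nonzero by Step 2; in particular $(\phi^{[k]},\psi^{[k]})\neq(0,0)$. Applying Step 1 to $(\mathbf{u}^{[k]},\phi^{[k]},\psi^{[k]})$ then gives $\langle\mathbf{u}^{[k+1]},(P_n^{[k+1]})^2\rangle=\kappa_n\,d_{n+2k}(\phi,\psi,x)\,\langle\mathbf{u}^{[k]},(P_{n+1}^{[k]})^2\rangle\neq0$, where $P_n^{[k+1]}=(P^{[k]})_n^{[1]}$ agrees with the polynomial in \eqref{Pnkx} by iterating that definition; hence $\mathbf{u}^{[k+1]}$ is regular with monic OPS $(P_n^{[k+1]})$, which closes the induction. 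Apart from Step 2, the remaining work — the reduction in Step 1 and the propagation of the $x$-GP equation in Step 3 — is routine but computation-heavy manipulation with $\mathrm{D}_x,\mathrm{S}_x,\mathbf{D}_x,\mathbf{S}_x$ through Proposition~\ref{propertyDxSx}.
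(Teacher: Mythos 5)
First, a caveat: the paper itself contains no proof of this theorem --- it is quoted from \cite{KCDMJP2022classical} --- so your proposal can only be measured against the strategy of that reference, which it does follow in outline: establish $\langle\mathbf{u}^{[1]},P_m^{[1]}P_n^{[1]}\rangle=\kappa_n\,d_n\,h_{n+1}\,\delta_{m,n}$, deduce admissibility, and iterate. Your Step 3 is sound: the propagation of the $x$-GP equation to the pairs $(\phi^{[k]},\psi^{[k]})$, the identity $d_n(\phi^{[k]},\psi^{[k]},x)=d_{n+2k}(\phi,\psi,x)$ via the addition formulas for $\alpha_n,\gamma_n$, and the compatibility $(P^{[k]})_n^{[1]}=P_n^{[k+1]}$ are all correct and legitimately reduce the theorem to the case $k=1$ together with admissibility. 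The case $N=0$ of Step 2 is also complete and correct.

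The genuine gap is Step 2 for $N\geq1$, and you flag it yourself. From $d_N=0$ you correctly obtain $P_N^{[1]}\mathbf{u}^{[1]}=\mathbf{0}$, but this is not yet a contradiction: $\mathbf{u}^{[1]}$ is not known to be regular at this stage (under the hypothesis $d_N=0$ it would in fact fail to be), and a nonzero, non-regular functional can perfectly well be annihilated by a nonzero polynomial --- so the ``standard fact'' you invoke for $\mathbf{u}$ cannot be applied to $\mathbf{u}^{[1]}$. The entire burden is therefore the ``translation'': from $\langle\mathbf{u},\texttt{U}_2\psi\,\mathrm{D}_x(P_N^{[1]}g)+\phi\,\mathrm{S}_x(P_N^{[1]}g)\rangle=0$ for all $g$, which by \eqref{1.3}--\eqref{1.4} takes the form $\langle\mathbf{u},(\mathrm{S}_xg)M+(\mathrm{D}_xg)N\rangle=0$ with $M,N$ built from $P_N^{[1]},\phi,\psi,\texttt{U}_2$, one must eliminate against $(\star)$, i.e.\ against $\langle\mathbf{u},\phi\,\mathrm{D}_xg+\psi\,\mathrm{S}_xg\rangle=0$, to produce a single nonzero polynomial annihilating $\mathbf{u}$; note that $(\mathrm{D}_xg,\mathrm{S}_xg)$ does not range over all of $\mathcal{P}\times\mathcal{P}$, so this elimination is not formal. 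You state the goal but supply no argument, so admissibility --- and with it the nonvanishing of $\langle\mathbf{u}^{[1]},(P_n^{[1]})^2\rangle$, hence the regularity of every $\mathbf{u}^{[k]}$ --- remains unproved. Secondarily, Step 1, which is the computational core of the whole statement, is only described (``I would expand\dots''), not carried out; its claimed outcome is correct, but as written the proposal contains neither of the two substantive ingredients of the proof.
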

The converse of this theorem is known in the following form (see \cite{KCDM2022c}).
\begin{theorem}
If $(P_n)_{n\geq 0}$ and $(P_n ^{[k]})_{n\geq 0}$, for some $k$, $k=1,2,\ldots$ are both orthogonal polynomials sequences, then $(P_n)_{n\geq 0}$ is $x$-classical OPS.
\end{theorem}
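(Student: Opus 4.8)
The plan is to translate the orthogonality of $(P_n^{[k]})_{n\ge0}$ into a handful of functional identities relating ${\bf u}$ --- the regular functional with respect to which $(P_n)_{n\ge0}$ is orthogonal --- and the regular functional ${\bf v}$ for $(P_n^{[k]})_{n\ge0}$, then to \emph{eliminate} ${\bf v}$ by means of the operator identities of Proposition~\ref{propertyDxSx}, and finally to recast the surviving identity as the $x$-GP equation \eqref{NUL-Pearson} for ${\bf u}$. Write $({\bf a}_n)_{n\ge0}$ and $({\bf a}_n^{[k]})_{n\ge0}$ for the dual bases of $(P_n)_{n\ge0}$ and $(P_n^{[k]})_{n\ge0}$, so that ${\bf a}_0^{[k]}=\langle{\bf v},1\rangle^{-1}{\bf v}$ and ${\bf a}_n=\langle{\bf u},P_n^2\rangle^{-1}P_n{\bf u}$ by \eqref{expression-an}. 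The first identity comes from \eqref{Pnkx}: since $\mathrm{D}_x^k P_m=\tfrac{\gamma_m!}{\gamma_{m-k}!}P_{m-k}^{[k]}$ and $\langle{\bf v},P_j^{[k]}\rangle=\langle{\bf v},1\rangle\delta_{j,0}$, the relation $\langle\mathbf{D}_x^k{\bf v},P_m\rangle=(-1)^k\langle{\bf v},\mathrm{D}_x^k P_m\rangle$ gives $\langle\mathbf{D}_x^k{\bf v},P_m\rangle=0$ for $m\ne k$, that is
\[
\mathbf{D}_x^k{\bf v}=\lambda\,P_k\,{\bf u},\qquad \lambda=(-1)^k\gamma_k!\,\langle{\bf v},1\rangle\langle{\bf u},P_k^2\rangle^{-1}\ne0 .
\]

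I would run the argument first in the model case $k=1$, which already carries all the ideas. Here $\mathbf{D}_x{\bf v}=\lambda P_1{\bf u}$, and a companion relation is extracted from the two three-term recurrences: differentiating $P_{m+1}=(z-B_m)P_m-C_mP_{m-1}$ via \eqref{1.3} (and $\mathrm{D}_x z=1$) gives $\mathrm{S}_x P_m=\mathrm{D}_x P_{m+1}-(\mathrm{S}_x z-B_m)\mathrm{D}_x P_m+C_m\mathrm{D}_x P_{m-1}$, and one use of the recurrence of $(P_n^{[1]})_{n\ge0}$ then shows $\mathrm{S}_x P_m$ to be a three-term combination of $P_m^{[1]},P_{m-1}^{[1]},P_{m-2}^{[1]}$; consequently $\langle\mathbf{S}_x{\bf v},P_m\rangle=\langle{\bf v},\mathrm{S}_x P_m\rangle=0$ for $m\ge3$, i.e. $\mathbf{S}_x{\bf v}=\rho\,{\bf u}$ with $\rho\in\mathcal{P}_2$. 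Applying $\mathbf{D}_x$ to $\mathbf{S}_x{\bf v}=\rho{\bf u}$ and invoking $\alpha\,\mathbf{D}_x\mathbf{S}_x{\bf v}=\alpha_2\,\mathbf{S}_x\mathbf{D}_x{\bf v}+\texttt{U}_1\,\mathbf{D}_x^2{\bf v}$ (case $n=1$ of \eqref{def Dx^nSx-u}) together with $\mathbf{D}_x{\bf v}=\lambda P_1{\bf u}$ eliminates ${\bf v}$:
\[
\alpha\,\mathbf{D}_x(\rho{\bf u})=\lambda\alpha_2\,\mathbf{S}_x(P_1{\bf u})+\lambda\,\texttt{U}_1\,\mathbf{D}_x(P_1{\bf u}).
\]
Expanding each term by \eqref{a}--\eqref{1} rewrites this as $A(z)\,\mathbf{D}_x{\bf u}+B(z)\,\mathbf{S}_x{\bf u}=0$ with $\deg A\le2$ and $\deg B\le1$. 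Since ${\bf u}$ is regular, $\mathbf{D}_x{\bf u}$ is killed by no nonzero polynomial, so $(A,B)=(0,0)$ as soon as $B\equiv0$; to exclude this one brings in, if needed, the further relations $\mathbf{D}_x^2{\bf v}=(\,\cdot\,){\bf u}$ and $\mathbf{D}_x\mathbf{S}_x{\bf v}=(\,\cdot\,){\bf u}$, which are likewise polynomial multiples of ${\bf u}$ of controlled degree, and uses that $(P_n^{[1]})_{n\ge0}$ is genuinely orthogonal. Having secured $(A,B)\ne(0,0)$, comparison of $A\mathbf{D}_x{\bf u}+B\mathbf{S}_x{\bf u}=0$ with the expansion of $\mathbf{D}_x(\phi{\bf u})-\mathbf{S}_x(\psi{\bf u})$ supplied by \eqref{a}--\eqref{1} is a small consistent linear system for the coefficients of a nonzero pair $(\phi,\psi)\in\mathcal{P}_2\times\mathcal{P}_1$ with $\mathbf{D}_x(\phi{\bf u})=\mathbf{S}_x(\psi{\bf u})$; hence ${\bf u}$ is $x$-classical.

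For general $k$ the same scheme applies with the relations $\mathbf{D}_x^{\,k-i}\mathbf{S}_x^{\,i}{\bf v}=R_i{\bf u}$ ($0\le i\le k$), obtained exactly as above, in place of the two relations used when $k=1$; repeated use of \eqref{def Dx^nSx-u}, \eqref{a} and \eqref{1} removes ${\bf v}$ together with all the intermediate functionals $\mathbf{D}_x^{\,j}{\bf v}$ and again leaves $A\mathbf{D}_x{\bf u}+B\mathbf{S}_x{\bf u}=0$ with $\deg A\le2$, $\deg B\le1$. Alternatively one may reduce to $k=1$ by first proving that $(P_n)_{n\ge0}$ and $(P_n^{[k]})_{n\ge0}$ being OPS forces $(P_n^{[1]})_{n\ge0}$ to be an OPS, and then applying the previous paragraph. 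Either way the real obstacle is the same: keeping the degrees of the $R_i$ (hence of $A$ and $B$) under control throughout the elimination, and verifying that the final relation is non-degenerate --- for the reduction route this amounts to producing a genuine three-term recurrence with non-vanishing off-diagonal coefficients for $(P_n^{[1]})_{n\ge0}$ out of the recurrences of $(P_n)_{n\ge0}$ and $(P_n^{[k]})_{n\ge0}$. Everything else is the lengthy but mechanical algebra of Proposition~\ref{propertyDxSx}.
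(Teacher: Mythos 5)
A preliminary remark: the paper itself offers no proof of this theorem --- it is quoted from \cite{KCDM2022c} --- so the only internal points of comparison are the structurally identical arguments in the proofs of Lemmas \ref{lemma-exple1} and \ref{conject-is-dx-classical}. Your plan for $k=1$ is essentially that same scheme, and its two inputs are correctly derived: $\mathbf{D}_x{\bf v}=\lambda P_1{\bf u}$ with $\lambda\neq0$ follows from \eqref{expression-an} and \eqref{Pnkx} as you say, and $\mathbf{S}_x{\bf v}=\rho{\bf u}$ with $0\neq\rho\in\mathcal{P}_2$ follows from your three-term expansion of $\mathrm{S}_xP_m$ in $P_m^{[1]},P_{m-1}^{[1]},P_{m-2}^{[1]}$. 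The genuine hole in this part is the endgame: after flattening to $A\,\mathbf{D}_x{\bf u}+B\,\mathbf{S}_x{\bf u}=0$ you concede that $(A,B)$ could vanish identically and only promise to ``bring in further relations if needed''. That cannot be left open, and in fact it evaporates if you regroup instead of flattening. Using $\texttt{U}_1\mathbf{D}_x{\bf w}=\alpha\mathbf{D}_x(\texttt{U}_1{\bf w})-(\alpha^2-1)\mathbf{S}_x{\bf w}$ --- the identity already invoked in the proof of Lemma \ref{lemma-exple1}, obtained from \eqref{a} with $f=\texttt{U}_1$ since $\mathrm{S}_x\texttt{U}_1=\alpha\texttt{U}_1$ --- your eliminated relation $\alpha\mathbf{D}_x(\rho{\bf u})=\lambda\alpha_2\mathbf{S}_x(P_1{\bf u})+\lambda\texttt{U}_1\mathbf{D}_x(P_1{\bf u})$ becomes, because $\alpha_2-(\alpha^2-1)=\alpha^2$,
\[
\mathbf{D}_x\Big(\alpha^{-1}\big(\rho-\lambda\texttt{U}_1P_1\big){\bf u}\Big)=\mathbf{S}_x\big(\lambda P_1{\bf u}\big),
\]
where $\psi:=\lambda P_1$ is visibly nonzero of degree one, so no degeneracy discussion is needed ($\phi\neq0$ then follows from the regularity criterion of Theorem \ref{main-Thm1}). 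Your own route can also be closed: the linear map $(\phi,\psi)\mapsto(\tilde A,\tilde B)$ on $\mathcal{P}_2\times\mathcal{P}_1$ is injective (a pair in its kernel would make every regular functional $x$-classical), hence bijective, and your $(A,B)$ is $\alpha^2$ times the image of the nonzero pair above --- but none of this is in your text.

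The larger gap is $k\geq2$, where the proposal is an outline rather than a proof. The relations $\mathbf{D}_x^{\,k-i}\mathbf{S}_x^{\,i}{\bf v}=R_i{\bf u}$ are not established for $i\geq1$: one must show that $\mathrm{S}_x^{\,i}\mathrm{D}_x^{\,k-i}P_m$ is a combination of boundedly many $P_j^{[k]}$, which requires iterating \eqref{def Dx^nSx-u} against the recurrence of $(P_n^{[k]})_{n\geq0}$ and is exactly where the degree control you flag as ``the real obstacle'' can fail; and the elimination of the $k+1$ intermediate functionals down to a single relation in $\mathbf{D}_x{\bf u}$ and $\mathbf{S}_x{\bf u}$ of the right degrees is a nontrivial induction that is not performed. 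The alternative reduction --- that orthogonality of $(P_n)_{n\geq0}$ and $(P_n^{[k]})_{n\geq0}$ forces $(P_n^{[1]})_{n\geq0}$ to be an OPS --- is asserted, not proved, and is not obviously easier than the theorem itself. (A minor further point: your computations are written for the $q$-quadratic lattice, where $\mathrm{D}_xz=1$ and $\mathrm{S}_xz=\alpha z+\beta$; the quadratic case $q=1$ needs the parallel, simpler, computation.) In sum: correct strategy and an essentially complete argument for $k=1$ once the non-degeneracy step is repaired as above; for general $k$ the substantive work remains to be done.
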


We can now state the main result of this paragraph (see \cite{KCDMJP2022classical} for its proof).
\begin{theorem}\label{main-Thm1}
Consider the lattice
$$
x(s)=\mathfrak{c}_1 q^{-s} +\mathfrak{c}_2 q^s +\mathfrak{c}_3\,. 
$$
Let ${\bf u}\in\mathcal{P}^*\setminus\{{\bf 0}\}$ and suppose that there exist $(\phi,\psi)\in\mathcal{P}_2\times\mathcal{P}_1\setminus\{(0,0)\}$ 
such that 
\begin{equation}\label{NUL-PearsonMainThm1}
\mathbf{D}_{x}(\phi{\bf u})=\mathbf{S}_{x}(\psi{\bf u})\;.
\end{equation}
Set $\phi(z)=az^2+bz+c$, $\psi(z)=dz+e$ ($a,b,c,d,e \in \mathbb{C}$), $d_{n}=a\gamma_{n}+d\alpha_{n}$, $e_n=\phi'(\mathfrak{c}_3)\gamma_n+\psi(\mathfrak{c}_3)\alpha_n$, and $\phi^{[n]}$ and $\psi^{[n]}$ be given by \eqref{psi-explicit}--\eqref{phi-explicit}. 
Then,  ${\bf u}$ is regular if and only if $(\phi,\psi)$ is a $x-$admissible pair and $\psi^{[n]}\nmid\phi^{[n]}$ for each $n=0,1,\ldots$; 
that is to say, ${\bf u}$ is regular  if and only if the following conditions hold:
\begin{equation}\label{le1a}
d_n\neq0\;,\quad \phi^{[n]}\left(\mathfrak{c}_3 -\frac{e_n}{d_{2n}}\right)\neq0\quad(n=0,1,\ldots)\,.
\end{equation}
Under such conditions, the monic OPS $(P_n)_{n\geq 0}$ with respect to ${\bf u}$ satisfies 
\begin{equation}\label{ttrr-Dx}
P_{n+1}(z)=(z-B_n)P_n(z)-C_nP_{n-1}(z) \quad(n=0,1,\ldots),
\end{equation}
with $P_{-1}(z)=0$, where the recurrence coefficients are given by
\begin{align}
B_n & =\mathfrak{c}_3+ \frac{\gamma_n e_{n-1}}{d_{2n-2}}
-\frac{\gamma_{n+1}e_n}{d_{2n}}\, ,\label{Bn-Dx} \\
C_{n+1} & =-\frac{\gamma_{n+1}d_{n-1}}{d_{2n-1}d_{2n+1}}\phi^{[n]}\left(\mathfrak{c}_3 -\frac{e_{n}}{d_{2n}}\right)\label{Cn-Dx}
\end{align}
$(n=0,1,\ldots)$.
Moreover, the following functional Rodrigues formula holds:
\begin{align}\label{RodThemMain}
P_n {\bf u} = k_n\mathbf{D}_x^n {\bf u}^{[n]}\;, \quad
k_n := (-\alpha)^{-n} \prod_{j=1} ^n d_{n+j-2} ^{-1} 
\quad(n=0,1,\ldots)\,.
\end{align}
\end{theorem}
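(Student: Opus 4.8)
The plan is to follow the standard Maroni-type strategy, adapted to the lattice setting, in which one reconstructs the orthogonality of $(P_n)_{n\ge0}$ from the $x$-GP equation by an inductive analysis of the functionals ${\bf u}^{[k]}$ together with a careful bookkeeping of the structure relations. First I would establish the algebraic backbone: applying the Leibniz formula of Theorem~\ref{Leibniz-rule-NUL} (or directly Corollary~\ref{LeibnizCor1}) to $\mathbf{D}_x^n(\phi{\bf u})$ and using \eqref{a}, \eqref{1}, and \eqref{def Dx^nSx-u}, one shows that the sequence $({\bf u}^{[k]})_{k\ge0}$ defined by \eqref{uk-func-Dx} satisfies its own $x$-GP equation $\mathbf{D}_x(\phi^{[k]}{\bf u}^{[k]})=\mathbf{S}_x(\psi^{[k]}{\bf u}^{[k]})$ with the polynomials $\phi^{[k]},\psi^{[k]}$ of \eqref{psi-explicit}--\eqref{phi-explicit}, and that there are scalars $\lambda_k\neq0$ (expressible through the $d_n$) with ${\bf u}^{[k+1]}=\lambda_k\,\phi^{[k+1]}{\bf u}^{[k]}-(\text{lower order})$; more precisely the key identity I would aim for is a relation of the form ${\bf u}^{[k]} = \rho_k\,\mathbf{D}_x^{k}$-type expansion that lets one pass orthogonality up and down the ladder. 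The $x$-admissibility hypothesis $d_n\neq0$ is exactly what guarantees $\deg\phi^{[k]}$ and $\deg\psi^{[k]}$ behave correctly and that $P_n^{[k]}$ has the right degree.

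Next I would prove the Rodrigues formula \eqref{RodThemMain} by downward/upward induction on $n$. The base case $n=0$ is trivial. For the inductive step one writes $\mathbf{D}_x^n{\bf u}^{[n]}$, uses \eqref{uk-func-Dx} to replace ${\bf u}^{[n]}$ by $\mathbf{D}_x(\texttt{U}_2\psi^{[n-1]}{\bf u}^{[n-1]})-\mathbf{S}_x(\phi^{[n-1]}{\bf u}^{[n-1]})$, then commutes $\mathbf{D}_x^n$ past these using Proposition~\ref{propertyDxSx} (especially \eqref{a}, \eqml{1}, \eqref{def Dx^nSx-u}) and Leibniz; collecting terms and invoking the induction hypothesis $P_{n-1}{\bf u}=k_{n-1}\mathbf{D}_x^{n-1}{\bf u}^{[n-1]}$ together with the three-term recurrence \eqref{ttrr-Dx} should produce $P_n{\bf u}=k_n\mathbf{D}_x^n{\bf u}^{[n]}$ with the stated constant $k_n=(-\alpha)^{-n}\prod_{j=1}^n d_{n+j-2}^{-1}$. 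Here the constant is pinned down by comparing leading coefficients, using \eqref{Dx-xn}--\eqref{Sx-xn} to track how $\mathbf{D}_x$ and $\mathbf{S}_x$ act on the top-degree part and the appearance of $d_{n+j-2}=\tfrac12\gamma_{n+j-2}\phi''+\alpha_{n+j-2}\psi'$ at each stage.

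For the recurrence coefficients I would exploit the dual-basis machinery from the introduction. Since $(P_n)_{n\ge0}$ is the monic OPS for ${\bf u}$ (once regularity is known), \eqref{expression-an} gives ${\bf a}_n=\langle{\bf u},P_n^2\rangle^{-1}P_n{\bf u}$, and the classical formulas $B_n=\langle{\bf u},zP_n^2\rangle/\langle{\bf u},P_n^2\rangle$, $C_n=\langle{\bf u},P_n^2\rangle/\langle{\bf u},P_{n-1}^2\rangle$ must be evaluated. The Rodrigues formula reduces $\langle{\bf u},P_n^2\rangle$ to $k_n\langle\mathbf{D}_x^n{\bf u}^{[n]},P_n\rangle=k_n(-1)^n\langle{\bf u}^{[n]},\mathrm{D}_x^nP_n\rangle=k_n(-1)^n\gamma_n!\,\langle{\bf u}^{[n]},1\rangle$, so everything comes down to computing $\langle{\bf u}^{[n]},1\rangle$ in terms of $\langle{\bf u}^{[n-1]},1\rangle$; testing the $x$-GP equation for ${\bf u}^{[n-1]}$ against the polynomial $z$ (or against $z-\mathfrak c_3$) and using \eqref{AWxsG}--\eqref{AverOperG} together with \eqref{U1-simples-bis}--\eqref{U2-simples-bis} and the explicit $\phi^{[n]},\psi^{[n]}$ yields a first-order recursion whose solution involves exactly $\phi^{[n]}(\mathfrak c_3-e_n/d_{2n})$ and the ratios $\gamma_{n+1}/d_{2n}$, giving \eqref{Bn-Dx} and \eqref{Cn-Dx}. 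The condition $\phi^{[n]}(\mathfrak c_3-e_n/d_{2n})\neq0$ is then visibly equivalent to $C_{n+1}\neq0$, i.e.\ to regularity, closing the equivalence; combined with the reciprocal implication of Theorem~\ref{x-admissible} one obtains the "if and only if".

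The main obstacle I expect is the bookkeeping in the inductive step of the Rodrigues formula: commuting $\mathbf{D}_x^n$ through the product $\texttt{U}_2\psi^{[n-1]}{\bf u}^{[n-1]}$ and $\phi^{[n-1]}{\bf u}^{[n-1]}$ generates several terms of the form $\mathbf{D}_x^{n-k}\mathbf{S}_x^k{\bf u}^{[n-1]}$ (via Theorem~\ref{Leibniz-rule-NUL} and \eqref{def Dx^nSx-u}), and one must show that, after using the explicit quadratic forms \eqref{psi-explicit}--\eqref{phi-explicit} and the three-term recurrence, all the "unwanted" terms collapse and only $\mathbf{D}_x^n{\bf u}^{[n]}$ survives with the right scalar. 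The quadratic lattice $q$-commutation constants $\alpha_n,\gamma_n$ satisfying \eqref{1.2}--\eqref{1.2c} make these cancellations somewhat delicate, and keeping track of the normalizing products $\prod d_{n+j-2}$ is where most of the genuine computation lies; everything else is a fairly direct translation of the $\mathrm{D}$-classical proof.
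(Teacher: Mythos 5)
The paper itself does not reproduce a proof of Theorem~\ref{main-Thm1} (it defers to \cite{KCDMJP2022classical}), so your sketch has to be measured against that argument. Your outline does capture the machinery used there — the ladder ${\bf u}^{[k]}$ with its own $x$-GP equation, the Leibniz formula of Corollary~\ref{LeibnizCor1}, the reduction of $\langle{\bf u},P_n^2\rangle$ to $\langle{\bf u}^{[n]},1\rangle$ via \eqref{RodThemMain}, and the first-order recursion $\langle{\bf u}^{[k+1]},1\rangle=-\langle{\bf u}^{[k]},\phi^{[k]}\rangle$ coming from \eqref{uk-func-Dx} — and this is enough, in principle, to obtain the \emph{necessity} of \eqref{le1a} together with the formulas \eqref{Bn-Dx}, \eqref{Cn-Dx} and \eqref{RodThemMain}, since in that direction regularity is assumed and Theorem~\ref{x-admissible} supplies admissibility and the regularity of each ${\bf u}^{[k]}$.

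The genuine gap is the sufficiency direction, which is the actual content of the theorem. Every step of your plan presupposes that $(P_n)_{n\ge0}$ already exists as the monic OPS for ${\bf u}$: the Rodrigues induction invokes the three-term recurrence \eqref{ttrr-Dx}, and the computation of $B_n$, $C_{n+1}$ uses \eqref{expression-an} and the moment quotients $\langle{\bf u},zP_n^2\rangle/\langle{\bf u},P_n^2\rangle$. Saying that $\phi^{[n]}\bigl(\mathfrak c_3-e_n/d_{2n}\bigr)\neq0$ is ``visibly equivalent to $C_{n+1}\neq0$, i.e.\ to regularity'' is circular: there is no $C_{n+1}$ until regularity is known, and Theorem~\ref{x-admissible} only gives the converse implication. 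To close this you must construct the candidate polynomials from ${\bf u}$ and the data $(\phi,\psi)$ alone: show by induction (via Corollary~\ref{LeibnizCor1} applied to the defining relation \eqref{uk-func-Dx}) that $\mathbf{D}_x^n{\bf u}^{[n]}=\Phi_n{\bf u}$ for an explicit polynomial $\Phi_n$ whose leading coefficient is a nonzero multiple of $\prod_{j=1}^n d_{n+j-2}$, so that $d_n\neq0$ forces $\deg\Phi_n=n$; then compute $\langle{\bf u},\Phi_n z^m\rangle=(-1)^n\langle{\bf u}^{[n]},\mathrm{D}_x^nz^m\rangle$ to get orthogonality for $m<n$ and show $\langle{\bf u},\Phi_n^2\rangle\neq0$ precisely when $\phi^{[n]}\bigl(\mathfrak c_3-e_n/d_{2n}\bigr)\neq0$, using the recursion for $\langle{\bf u}^{[n]},1\rangle$. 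Only then is ${\bf u}$ regular with $P_n=\Phi_n/(\text{leading coefficient})$, and the formulas \eqref{Bn-Dx}--\eqref{Cn-Dx} and \eqref{RodThemMain} follow as you describe (alternatively one can finish with Favard's theorem once the candidate sequence is shown to satisfy a TTRR with the stated coefficients). Without this construction your argument proves only half of the stated equivalence.
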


The following asymptotic behaviour of the above sequence $(B_n)_{n\geq 0}$ is useful (see \cite{DMthesis}).
\begin{corollary}\label{assymptotic-behaviour-coef-ttrr-Dx}

Assume the hypothesis (and hence the conclusions) of Theorem \ref{main-Thm1}. Then
\begin{align}\label{SBn-converges}
S_n := \sum_{j=0} ^{n-1} (B_j -\mathfrak{c}_3) = -\frac{\gamma_n e_{n-1}}{d_{2n-2}}, \quad n=0,1,2,\ldots.
\end{align}
In addition, setting $u:=(q^{1/2} -q^{-1/2})^{-1}$, the following holds:
\begin{itemize}
\item[a)] For $0<q<1$ and $d-2au \neq 0$, we have 
\begin{align}
&\lim_{n \rightarrow \infty} (B_n -\mathfrak{c}_3) = \lim_{n \rightarrow \infty} q^{\pm n/2}(B_n -\mathfrak{c}_3)=0\;, \label{q-is-btwn01a}\\
&\lim_{n \rightarrow \infty} q^{-n} (B_n -\mathfrak{c}_3 )=-\frac{q^{-1/2}\Big(\psi(\mathfrak{c}_3) -4\alpha u^2\phi'(\mathfrak{c}_3)\Big)}{u(d-2au)}\;, \label{q-is-btwn01b}\\
&S:= \sum_{j=0} ^{\infty} (B_j -\mathfrak{c}_3) =\frac{\psi(\mathfrak{c}_3 )-2u \phi'(\mathfrak{c}_3)}{(q-1)(d-2au)}\;. \label{q-is-btwn01c}
\end{align} 
\item[b)]For $1<q<\infty$ and $d+2au \neq 0$, we have 
\begin{align}
&\lim_{n \rightarrow \infty} (B_n -\mathfrak{c}_3) = \lim_{n \rightarrow \infty} q^{\pm n/2}(B_n -\mathfrak{c}_3)=0\; ,\label{q-is-big1a}\\
&\lim_{n \rightarrow \infty} q^n (B_n -\mathfrak{c}_3 )=\frac{q^{1/2}\Big(\psi(\mathfrak{c}_3) -4\alpha u^2\phi'(\mathfrak{c}_3)\Big)}{u(d+2au)} \label{q-is-big1b}\\ 
&S=\sum_{j=0} ^{\infty} (B_j -\mathfrak{c}_3)=\frac{\psi(\mathfrak{c}_3 )+2 u\phi'(\mathfrak{c}_3)}{(q^{-1}-1)(d+2au)}\;.\label{q-is-big1c}
\end{align}
\end{itemize}
\end{corollary}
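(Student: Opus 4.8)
The plan is to derive \eqref{SBn-converges} first as an exact identity and then extract the asymptotics by inserting the closed forms \eqref{alpha-n}, \eqref{gamma-n} for $\alpha_n$ and $\gamma_n$ into the expressions for $d_n$ and $e_n$. For the telescoping identity, I would start from \eqref{Bn-Dx} in Theorem \ref{main-Thm1}, which gives $B_j-\mathfrak{c}_3=\gamma_j e_{j-1}/d_{2j-2}-\gamma_{j+1}e_j/d_{2j}$. Summing from $j=0$ to $n-1$ telescopes: all interior terms cancel and one is left with the $j=0$ head term $\gamma_0 e_{-1}/d_{-2}$ and the $j=n-1$ tail term $-\gamma_n e_{n-1}/d_{2n-2}$. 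Since $\gamma_0=0$ by \eqref{gamma-n}, the head term vanishes, yielding $S_n=-\gamma_n e_{n-1}/d_{2n-2}$ exactly as claimed; one only needs to check that $d_{-2}\neq0$ so the head term is genuinely $0/d_{-2}=0$ rather than indeterminate, which follows from $x$-admissibility together with the conventions $\alpha_{-1}=\alpha$, $\gamma_{-1}=-1$ (so $d_{-2}=a\gamma_{-2}+d\alpha_{-2}$, and one checks this is nonzero, or alternatively reads off from the recurrence \eqref{1.2}--\eqref{1.2c} extended to negative indices; in any case $\gamma_0 e_{-1}=0$ kills the term regardless).

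With the identity in hand, the asymptotic analysis reduces to studying $-\gamma_n e_{n-1}/d_{2n-2}$ as $n\to\infty$ for $q\neq1$. Write $u=(q^{1/2}-q^{-1/2})^{-1}$, so $\gamma_n=u(q^{n/2}-q^{-n/2})$ and $\alpha_n=\tfrac12(q^{n/2}+q^{-n/2})$. Then $d_{2n-2}=a\gamma_{2n-2}+d\alpha_{2n-2}$ and $e_{n-1}=\phi'(\mathfrak{c}_3)\gamma_{n-1}+\psi(\mathfrak{c}_3)\alpha_{n-1}$ are each explicit combinations of $q^{\pm n/2}$ and $q^{\pm n}$. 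In case (a), $0<q<1$, the dominant powers are the $q^{-n}$ and $q^{-n/2}$ terms: the leading behaviour of $d_{2n-2}$ is $\tfrac12(d-2au)q^{-(n-1)} = \tfrac12(d-2au)q\cdot q^{-n}$ (using $a\cdot u - d\cdot\tfrac12 \cdot(-1)$... more precisely $a\gamma_{2n-2}\sim -au\, q^{-(n-1)}$ and $d\alpha_{2n-2}\sim \tfrac{d}{2}q^{-(n-1)}$, so $d_{2n-2}\sim \tfrac12(d-2au)q^{-(n-1)}$), and similarly $e_{n-1}\sim \tfrac12\big(\psi(\mathfrak{c}_3)-2u\phi'(\mathfrak{c}_3)\big)q^{-(n-1)/2}$ while $\gamma_n\sim -u\,q^{-n/2}$. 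Dividing, $B_n-\mathfrak{c}_3$ decays like $q^{+n/2}$ up to constants once one accounts for the shift in $B_{n+1}-\mathfrak{c}_3=S_{n+1}-S_n$; carefully tracking the exponents gives \eqref{q-is-btwn01a} and the precise constant in \eqref{q-is-btwn01b} after multiplying by $q^{-n}$ and taking the limit. For the sum $S$ in \eqref{q-is-btwn01c} one takes $n\to\infty$ directly in $S_n=-\gamma_n e_{n-1}/d_{2n-2}$: the ratio $\gamma_n e_{n-1}/d_{2n-2}$ tends to a finite limit because numerator and denominator have matching dominant exponent $q^{-n}$ (namely $\gamma_n e_{n-1}\sim -u\cdot\tfrac12(\psi(\mathfrak{c}_3)-2u\phi'(\mathfrak{c}_3))\,q^{-n}$ against $d_{2n-2}\sim\tfrac12(d-2au)q\cdot q^{-n}$), and simplifying the constant using $q^{1/2}-q^{-1/2}=1/u$ and $(q-1)=q^{1/2}(q^{1/2}-q^{-1/2})=q^{1/2}/u$ produces the stated closed form. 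Case (b), $1<q<\infty$, is entirely symmetric: now the $q^{+n}$, $q^{+n/2}$ powers dominate, and the same bookkeeping with the roles of $q^{n/2}$ and $q^{-n/2}$ interchanged yields \eqref{q-is-big1a}--\eqref{q-is-big1c}; alternatively one can observe that replacing $q$ by $q^{-1}$ leaves the lattice essentially invariant (it swaps $\mathfrak{c}_1\leftrightarrow\mathfrak{c}_2$ and $u\mapsto -u$) and deduce (b) from (a) by this substitution, which also explains the sign change $d-2au\leftrightarrow d+2au$.

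The main obstacle I anticipate is purely bookkeeping: keeping track of the four exponent scales $q^{\pm n}, q^{\pm n/2}$ simultaneously in numerator and denominator, together with the index shifts ($2n-2$ versus $n-1$ versus $n$), and correctly identifying which combinations appear in the nondegeneracy hypotheses $d-2au\neq0$ (resp.\ $d+2au\neq0$) — these are exactly the conditions guaranteeing the leading coefficient of $d_{2n-2}$ does not vanish, so that the ratio has the claimed order. No conceptual difficulty arises; the admissibility hypothesis from Theorem \ref{main-Thm1} already guarantees $d_n\neq0$ for all $n$, so every quotient in sight is well-defined, and the limits are then a routine extraction of leading terms in geometric progressions. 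I would organize the write-up as: (i) telescoping proof of \eqref{SBn-converges}; (ii) substitution of closed forms and extraction of leading exponents; (iii) the three limits in (a); (iv) remark that (b) follows by $q\mapsto q^{-1}$ or by the symmetric computation.
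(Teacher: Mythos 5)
Your plan follows the route the paper intends (the proof is deferred to the thesis \cite{DMthesis}): telescoping \eqref{Bn-Dx}, with the head term killed by $\gamma_0=0$, gives \eqref{SBn-converges}, and the limits then come from inserting \eqref{alpha-n} and \eqref{gamma-n} into $\gamma_n$, $e_{n-1}$, $d_{2n-2}$; your reduction of case b) to case a) via $q\mapsto q^{-1}$ (so $u\mapsto-u$, $\mathfrak{c}_1\leftrightarrow\mathfrak{c}_2$, hence $d-2au\mapsto d+2au$) is also sound. Two computational claims in your sketch need repair, one of them essential. Minor: for $0<q<1$ the dominant exponent of $\gamma_n e_{n-1}$ is $q^{-n+1/2}$, not $q^{-n}$; chasing that half power correctly is exactly what produces $uq^{-1/2}=1/(q-1)$ and hence the constant in \eqref{q-is-btwn01c}. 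Essential: the assertion that $B_n-\mathfrak{c}_3$ ``decays like $q^{n/2}$'' contradicts the very statements you are proving --- it would make the limit in \eqref{q-is-btwn01b} infinite and give $q^{-n/2}(B_n-\mathfrak{c}_3)$ a nonzero limit in \eqref{q-is-btwn01a}. Comparing dominant exponents of numerator and denominator only shows that $\sigma_n:=\gamma_n e_{n-1}/d_{2n-2}=-S_n$ converges; the decay of $B_n-\mathfrak{c}_3=\sigma_n-\sigma_{n+1}$ (note the index: $S_{n+1}-S_n=B_n-\mathfrak{c}_3$, not $B_{n+1}-\mathfrak{c}_3$) comes from the cancellation of these limits, so \eqref{q-is-btwn01a}--\eqref{q-is-btwn01b} require the next-order term of $\sigma_n$, not just its leading behaviour. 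Concretely, writing $e_{n-1}=\big(\tfrac12\psi(\mathfrak{c}_3)-u\phi'(\mathfrak{c}_3)\big)q^{-(n-1)/2}+\big(\tfrac12\psi(\mathfrak{c}_3)+u\phi'(\mathfrak{c}_3)\big)q^{(n-1)/2}$, $d_{2n-2}=\tfrac12(d-2au)q^{-(n-1)}+\tfrac12(d+2au)q^{n-1}$ and $\gamma_n=u(q^{n/2}-q^{-n/2})$, a direct expansion yields $\sigma_n=-S+\frac{\psi(\mathfrak{c}_3)-4\alpha u^{2}\phi'(\mathfrak{c}_3)}{d-2au}\,q^{n-1}+o(q^{n})$, so $B_n-\mathfrak{c}_3$ decays like $q^{n}$ and $q^{-n}(B_n-\mathfrak{c}_3)\to(q^{-1}-1)\big(\psi(\mathfrak{c}_3)-4\alpha u^{2}\phi'(\mathfrak{c}_3)\big)/(d-2au)$, which is precisely \eqref{q-is-btwn01b} because $q^{-1}-1=-q^{-1/2}/u$; \eqref{q-is-btwn01a} then follows at once, and case b) by your symmetry argument. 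With this second-order step inserted in place of the naive rate claim, your outline is complete and matches the intended proof.
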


Applying a limiting process (as $q\to1$) on Theorem \ref{main-Thm1}, we obtain the following result for quadratic lattices.

\begin{theorem}\label{mainthmquadratics}
Consider the lattice  $$x(s)=4\beta s^2 +\mathfrak{c}_5 s +\mathfrak{c}_6\,.$$ 
Let ${\bf u}\in\mathcal{P}^*\setminus\{{\bf 0}\}$ and suppose that there exist $(\phi,\psi)\in\mathcal{P}_2\times\mathcal{P}_1\setminus\{(0,0)\}$ such that 
\begin{equation}\label{NUL-PearsonMainThm2}
\mathbf{D}_{x}(\phi{\bf u})=\mathbf{S}_{x}(\psi{\bf u})\;.
\end{equation}
Set $\phi(z):=az^2+bz+c$ and $\psi(z):=dz+e$. 
Then {\bf u} is regular if and only if
\begin{equation}\label{le1a-quadratic}
d_n\neq0\;,\quad \phi^{[n]}\left(-\beta n^2 -\frac{e_n}{d_{2n}}\right)\neq0 \quad (n=0,1,\ldots)\,, 
\end{equation}
where $d_n :=an+d$, $e_n:=bn+e+2\beta dn^2$, and 
$$\phi ^{[n]}(z)=az^2 +(b+6\beta nd_n)z+ \phi(\beta n^2)+2\beta n\psi(\beta n^2)-\frac{n}{4}\left( 16\beta \mathfrak{c}_6 -\mathfrak{c}_5 ^2\right)d_n\,.$$ 
Under these conditions, the monic OPS $(P_n)_{n\geq 0}$ with respect to ${\bf u}$ satisfies \eqref{ttrr-Dx} with 
\begin{align}
B_n &= \frac{ne_{n-1}}{d_{2n-2}} -\frac{(n+1)e_n}{d_{2n}} -2\beta n(n-1),   \label{Bn-quadratic}\\
C_{n+1} &=-\frac{(n+1)d_{n-1}}{d_{2n-1}d_{2n+1}}\phi ^{[n]}\left(-\beta n^2 -\frac{e_n}{d_{2n}}  \right)\label{Cn-quadratic}
\end{align} 
$(n=0,1,\ldots)$.
Moreover, the following functional Rodrigues formula holds:
\begin{align}\label{RodThemMain2}
P_n {\bf u} = k_n\mathbf{D}_x^n {\bf u}^{[n]}\;, \quad
k_n := (-1)^{n} \prod_{j=1} ^n d_{n+j-2} ^{-1} 
\quad(n=0,1,\ldots)\,.
\end{align}
\end{theorem}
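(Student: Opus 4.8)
The plan is to obtain Theorem \ref{mainthmquadratics} as a limiting case of Theorem \ref{main-Thm1} by letting $q\to1$, as announced in the text. Concretely, I would start from the $q$-quadratic lattice $x_q(s)=\mathfrak{c}_1q^{-s}+\mathfrak{c}_2q^s+\mathfrak{c}_3$ and choose the parameters $\mathfrak{c}_1=\mathfrak{c}_1(q)$, $\mathfrak{c}_2=\mathfrak{c}_2(q)$, $\mathfrak{c}_3=\mathfrak{c}_3(q)$ so that, as $q\to1$, $x_q(s)$ converges (coefficientwise in $s$) to the quadratic lattice $x(s)=4\beta s^2+\mathfrak{c}_5 s+\mathfrak{c}_6$; this is the standard renormalization in which $\mathfrak{c}_3(q)\to\mathfrak{c}_6$ and the combination $(\alpha^2-1)\mathfrak{c}_1\mathfrak{c}_2$ tends to a finite limit producing the $16\beta\mathfrak{c}_6-\mathfrak{c}_5^2$ term (note $\frac14\mathfrak{c}_5^2$ already appears in $\texttt{U}_2$ for $q=1$ in \eqref{U2-simples-bis}). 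Using \eqref{alpha-n} and \eqref{gamma-n} one has $\alpha_n\to1$ and $\gamma_n\to n$, and $\alpha\to1$, $\alpha^2-1\sim\tfrac14(\log q)^2\cdot(\text{something})\to0$ at a controlled rate. I would then verify that under this normalization the operators $\mathrm D_x$, $\mathrm S_x$ and the functionals $\mathbf D_x{\bf u}$, $\mathbf S_x{\bf u}$ converge to their $q=1$ counterparts, so that a solution ${\bf u}$ of \eqref{NUL-PearsonMainThm2} is the limit of solutions of \eqref{NUL-PearsonMainThm1}; since regularity is detected by the nonvanishing of the finitely-many-at-a-time quantities in \eqref{le1a}, and these pass to the limit, the regularity criterion \eqref{le1a-quadratic} follows.

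The key computational steps, carried out in order, are: (i) take the limit $q\to1$ in the explicit formulas \eqref{psi-explicit}--\eqref{phi-explicit} for $\phi^{[n]},\psi^{[n]}$ after the renormalization, checking that $d_n=a\gamma_n+d\alpha_n\to an+d$ and $e_n=\phi'(\mathfrak{c}_3)\gamma_n+\psi(\mathfrak{c}_3)\alpha_n\to bn+e+2\beta dn^2$ (the extra $2\beta dn^2$ coming from the fact that $\phi'(\mathfrak{c}_3(q))$ and $\psi(\mathfrak{c}_3(q))$ themselves have $q$-dependent expansions whose first-order terms survive because $\gamma_n\sim n$ is unbounded in $n$ — this is the subtle point); (ii) likewise obtain the stated closed form of $\phi^{[n]}(z)$, in particular the coefficient $b+6\beta n d_n$ of $z$ and the constant term involving $\phi(\beta n^2)+2\beta n\psi(\beta n^2)-\tfrac n4(16\beta\mathfrak{c}_6-\mathfrak{c}_5^2)d_n$, by tracking which terms of \eqref{phi-explicit} are $O(1)$ and which pick up factors from $(\alpha^2-1)\gamma_{2k}$; (iii) pass to the limit in \eqref{Bn-Dx}--\eqref{Cn-Dx} to get \eqref{Bn-quadratic}--\eqref{Cn-quadratic}, where again the $-2\beta n(n-1)$ term in $B_n$ arises from the next-order expansion of $\mathfrak{c}_3+\gamma_n e_{n-1}/d_{2n-2}-\gamma_{n+1}e_n/d_{2n}$; and (iv) pass to the limit in the Rodrigues formula \eqref{RodThemMain}, noting $(-\alpha)^{-n}\to(-1)^n$ and $\prod d_{n+j-2}^{-1}\to\prod(a(n+j-2)+d)^{-1}$, which gives \eqref{RodThemMain2}.

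Alternatively — and this may actually be cleaner to write up — one can prove Theorem \ref{mainthmquadratics} directly, in complete parallel with the (already cited) proof of Theorem \ref{main-Thm1}, simply specializing every formula to $q=1$ using $\alpha=1$, $\alpha_n=1$, $\gamma_n=n$, $\texttt{U}_1=\tfrac12\mathfrak{c}_4=2\beta$, $\texttt{U}_2=\mathfrak{c}_4(z-\mathfrak{c}_6)+\tfrac14\mathfrak{c}_5^2=4\beta(z-\mathfrak{c}_6)+\tfrac14\mathfrak{c}_5^2$. In that route: one first checks that \eqref{NUL-PearsonMainThm2} forces $(\phi,\psi)$ admissible ($d_n=an+d\neq0$) exactly as in Theorem \ref{x-admissible}; then shows ${\bf u}^{[n]}$ satisfies its own GP-equation with data $(\phi^{[n]},\psi^{[n]})$ and verifies the explicit forms of $\phi^{[n]},\psi^{[n]}$ from \eqref{def-phi0psi0}--\eqref{def-psik} by induction on $n$; next uses the Leibniz formula (Theorem \ref{Leibniz-rule-NUL}, Corollary \ref{LeibnizCor1}) together with $\langle{\bf u},P_n^2\rangle\neq0$ to express the recurrence coefficients and to see that $C_{n+1}\neq0$ for all $n$ is equivalent to $\phi^{[n]}(-\beta n^2-e_n/d_{2n})\neq0$; and finally derives the Rodrigues formula from $P_n{\bf u}=\langle{\bf u},P_n^2\rangle\,{\bf a}_n$ in \eqref{expression-an} combined with the relation $P_n^{[k]}\leftrightarrow{\bf u}^{[k]}$ of Theorem \ref{x-admissible} (specialized to $q=1$) iterated $n$ times. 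The main obstacle in either approach is the bookkeeping of the $q$-expansions / lattice-shift constants: one must be careful that the "evaluation points'' are $-\beta n^2-e_n/d_{2n}$ rather than $\mathfrak{c}_6-e_n/d_{2n}$, i.e. that the natural center of the quadratic lattice at level $n$ is shifted by $-\beta n^2$ — this shift is the $q=1$ shadow of the identity $x(s+n)/2+x(s)/2=\alpha_n x_n(s)+\beta_n$ with $\beta_n=\beta n^2$ from \eqref{beta-n}, and getting this shift right is exactly what makes the constant term of $\phi^{[n]}$ and the $-2\beta n(n-1)$ correction in $B_n$ come out correctly.
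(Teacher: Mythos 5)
Your strategy matches the paper's announced one --- the text obtains Theorem \ref{mainthmquadratics} precisely by ``a limiting process as $q\to1$'' applied to Theorem \ref{main-Thm1} --- and your alternative direct route (rerun the proof with $\alpha=\alpha_n=1$, $\gamma_n=n$, $\beta_n=\beta n^2$, $\texttt{U}_1=2\beta$, $\texttt{U}_2(z)=4\beta(z-\mathfrak{c}_6)+\tfrac14\mathfrak{c}_5^2$) is what a complete write-up amounts to; you also correctly isolate the $-\beta n^2$ shift of the evaluation point as the crucial bookkeeping issue. However, the limiting route as you describe it has a genuine gap, on two levels. At the computational level the renormalization is stated incorrectly whenever $\beta\neq0$: forcing $\mathfrak{c}_1q^{-s}+\mathfrak{c}_2q^s+\mathfrak{c}_3\to 4\beta s^2+\mathfrak{c}_5 s+\mathfrak{c}_6$ requires $\mathfrak{c}_1+\mathfrak{c}_2\sim 8\beta\,(q^{1/2}-q^{-1/2})^{-2}\to\infty$, hence $\mathfrak{c}_3(q)\approx\mathfrak{c}_6-(\mathfrak{c}_1+\mathfrak{c}_2)$ diverges (it does \emph{not} tend to $\mathfrak{c}_6$), and $(\alpha^2-1)\mathfrak{c}_1\mathfrak{c}_2\sim\beta^2(q^{1/2}-q^{-1/2})^{-2}$ diverges as well; only combinations such as $(\alpha^2-1)\big((z-\mathfrak{c}_3)^2-4\mathfrak{c}_1\mathfrak{c}_2\big)$ in \eqref{U2-simples-bis} stay finite. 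Since your steps (i)--(iii) are organized around the premises $\mathfrak{c}_3\to\mathfrak{c}_6$ and $(\alpha^2-1)\mathfrak{c}_1\mathfrak{c}_2=O(1)$, the cancellation of divergent terms --- which is the entire content of obtaining $e_n=bn+e+2\beta dn^2$, the closed form of $\phi^{[n]}$, and the $-2\beta n(n-1)$ term in $B_n$ --- is not actually carried out, and taken at face value your normalization would produce different (wrong) limit formulas.

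At the logical level, the equivalence ``${\bf u}$ regular $\iff$ \eqref{le1a-quadratic}'' cannot simply be ``passed to the limit'': nonvanishing is not limit-stable ($d_n(q)\neq0$ for $q<1$ does not give $d_n\neq0$ at $q=1$, and a weak limit of regular functionals need not be regular), and, more basically, your scheme presupposes that a given ${\bf u}$ solving \eqref{NUL-PearsonMainThm2} on the quadratic lattice is the weak limit of functionals ${\bf u}_q$ solving the $q$-equation \eqref{NUL-PearsonMainThm1}; you give no construction of such approximants, and none is automatic. So the limit can at best serve heuristically, to guess \eqref{Bn-quadratic}--\eqref{Cn-quadratic} and \eqref{RodThemMain2}. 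The argument that actually closes the theorem is your second route: repeat the proof behind Theorem \ref{main-Thm1} with the $q=1$ operators throughout (admissibility, the Pearson equations satisfied by ${\bf u}^{[k]}$ with data $(\phi^{[k]},\psi^{[k]})$, a $q=1$ analogue of Corollary \ref{LeibnizCor1}, and the dual-basis identity \eqref{expression-an} for the Rodrigues formula). That sketch is sound and is the version you should write out in detail, checking along the way the stated closed forms of $d_n$, $e_n$ and $\phi^{[n]}$ by induction.
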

The following corollary can be easily proved.
\begin{corollary}
Under the assumptions of Theorem \ref{mainthmquadratics} (and hence the conclusions), the following hold.
\begin{align}\label{Bn-limit-quadratic}
\lim_{n\rightarrow \infty}~ \frac{C_{n+1}}{n^4} =\left\{
\begin{array}{lcl}
\beta^2, &  a\neq 0\;,\\ [7pt]
16\beta^2, &  a=0\;.
\end{array}
\right. \;\quad
\lim_{n\rightarrow \infty}~ \frac{B_n}{n^2} =\left\{
\begin{array}{lcl}
-2\beta, &  a\neq 0\;,\\ [7pt]
-8\beta, &  a=0\;.
\end{array}
\right.
\end{align}

\end{corollary}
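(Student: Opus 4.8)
The plan is to derive everything as a direct specialization of Theorem~\ref{mainthmquadratics}, reading off the leading behaviour of the explicit formulas \eqref{Bn-quadratic} and \eqref{Cn-quadratic}. First I would record that under the stated hypotheses $d_n=an+d$, so that for $a\neq0$ one has $d_n\sim an$ and for $a=0$ one has $d_n\equiv d$ (nonzero by $x$-admissibility, since $d_n=an+d\neq0$ for all $n$). Likewise $e_n=bn+e+2\beta dn^2\sim 2\beta dn^2$ when $d\neq0$; the case $d=0$ will have to be treated separately in the $a\neq0$ branch, because then $e_n\sim bn$ is only linear, but as we will see the relevant combination in $B_n$ still scales like $n^2$.

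For the $B_n$ limit I would substitute into \eqref{Bn-quadratic}: $B_n = n e_{n-1}/d_{2n-2} - (n+1)e_n/d_{2n} - 2\beta n(n-1)$. In the case $a\neq0$ and $d\neq0$, each of the first two terms behaves like $n\cdot(2\beta d n^2)/(2an)=\beta d n^2/a$, and the difference of the two is $O(n)$ (the leading $n^3/n$ terms cancel), so $B_n/n^2\to -2\beta$ from the last term alone. If $d=0$ (still $a\neq0$), then $e_n\sim bn$, the first two terms are $O(1)$, and again $B_n/n^2\to-2\beta$. In the case $a=0$, necessarily $d\neq0$, so $d_{2n}=d$, and $n e_{n-1}/d - (n+1)e_n/d$ has leading term $-(2\beta d)\big((n+1)n^2-n(n-1)^2\big)/d\sim -6\beta n^2$, whence $B_n/n^2\to -6\beta-2\beta=-8\beta$. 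For the $C_{n+1}$ limit I would use \eqref{Cn-quadratic} together with the explicit $\phi^{[n]}$. The argument is $-\beta n^2-e_n/d_{2n}$; when $a\neq0,d\neq0$ this is $\sim -\beta n^2 - \beta d n^2/(a\cdot2)\cdot(2/1)$... more carefully $e_n/d_{2n}\sim 2\beta d n^2/(2an)=\beta d n/a$, which is only $O(n)$, so the argument is $\sim-\beta n^2$, and since $\phi^{[n]}(z)=az^2+\cdots$ one gets $\phi^{[n]}(-\beta n^2-\cdots)\sim a\beta^2 n^4$; then $C_{n+1}\sim -(n)(an)/( (2an)(2an))\cdot a\beta^2n^4 = -\beta^2 n^4\cdot(1/4)\cdot\ldots$ — I will need to track the constant $-(n+1)d_{n-1}/(d_{2n-1}d_{2n+1})\sim -(n)(an)/(4a^2n^2)=-1/(4a)$, giving $C_{n+1}\sim -(1/(4a))\cdot a\beta^2 n^4 = -\tfrac14\beta^2n^4$; I'll recheck the sign of $\phi^{[n]}$'s value since the square makes it positive, so actually $\phi^{[n]}(-\beta n^2+O(n))\sim a\beta^2n^4$ and the product is $-\beta^2n^4/4$ — so I should double-check whether the claimed limit is $\beta^2$ or $\beta^2/4$, adjusting by re-examining $d_{2n\pm1}\sim 2an$ versus the $n$ in $(n+1)d_{n-1}\sim an^2$: the ratio is $an^2/(4a^2n^2)=1/(4a)$, so indeed a factor needs care. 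For $a=0$: $d_{2n}=d$, argument $=-\beta n^2 - e_n/d\sim -\beta n^2-2\beta n^2=-3\beta n^2$, and $\phi^{[n]}(z)=(b+6\beta nd_n)z+\cdots$ with $d_n=d$, so $\phi^{[n]}$ is now \emph{linear} in $z$ with leading coefficient $6\beta d n$; evaluating at $-3\beta n^2$ gives $\sim -18\beta^2 d n^3$, and $C_{n+1}\sim -(n+1)d/(d\cdot d)\cdot(-18\beta^2dn^3) = 18\beta^2 n^4$ — again a discrepancy with the stated $16\beta^2$, so the subleading corrections to the argument (the $-e_n/d$ piece, whose own leading coefficient is exactly $-2\beta d n^2/d=-2\beta n^2$, combined with the $-\beta n^2$ and with the next-order term $bn+e$ in $e_n$) must be incorporated precisely, and the exact polynomial identity for $\phi^{[n]}(-\beta n^2-e_n/d_{2n})$ should be expanded symbolically rather than asymptotically.

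Accordingly the main obstacle is purely computational bookkeeping: the leading-order asymptotics of $C_{n+1}$ involve a fourth-degree cancellation pattern in which the ``argument'' $-\beta n^2-e_n/d_{2n}$ must be expanded to the right order before being fed into $\phi^{[n]}$, and a naive leading-term estimate gives the wrong constant. The clean way to handle this is to use the exact closed form: observe from Theorem~\ref{mainthmquadratics} (and its $q=1$ provenance via \eqref{Cn-Dx}) that $C_{n+1}$ equals a ratio of products $d_k$ times a value of $\phi^{[n]}$, and to simplify $\phi^{[n]}\!\left(-\beta n^2-e_n/d_{2n}\right)$ algebraically: write $-\beta n^2-e_n/d_{2n} = -(\,\beta n^2 d_{2n}+e_n\,)/d_{2n}$ and note $\beta n^2 d_{2n}+e_n = \beta n^2(2an+d)+bn+e+2\beta dn^2 = 2a\beta n^3 + \beta d n^2 + 2\beta d n^2 + bn+e$ — keep this exact. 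Then $\phi^{[n]}$ of this quantity, being a quadratic (or linear, if $a=0$) polynomial, is a rational function of $n$ with denominator $d_{2n}^2$ (resp. $d_{2n}$), and multiplying by $(n+1)d_{n-1}/(d_{2n-1}d_{2n+1})$ produces a rational function of $n$ whose degree and leading coefficient can then be read off unambiguously. I would carry out this one symbolic simplification, extract $\lim C_{n+1}/n^4$ and $\lim B_n/n^2$ in each of the two cases $a\neq0$ and $a=0$, confirm the constants $\beta^2,16\beta^2$ and $-2\beta,-8\beta$ (correcting my rough estimates above), and that completes the proof. A sanity check against the known Racah/Wilson recurrence coefficients in the literature can be used to pin down any stray factor.
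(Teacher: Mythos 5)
Your strategy---reading both limits directly off \eqref{Bn-quadratic}--\eqref{Cn-quadratic}---is exactly the intended one, and your treatment of $B_n$ is correct in both cases (note also that the sub-case $d=0$ you set aside is vacuous: admissibility requires $d_0=d\neq0$). The genuine gap is the $C_{n+1}$ limit: you never complete the computation, and the partial estimates you do record ($-\beta^2/4$ for $a\neq0$, $18\beta^2$ for $a=0$) contradict the statement, so as written the corollary is left unproved, with the discrepancy explicitly unresolved. The error is concrete: in $\phi^{[n]}$ both the linear coefficient $b+6\beta n d_n$ and the constant part grow with $n$, and at the evaluation point $z_n:=-\beta n^2-e_n/d_{2n}$ they contribute at the \emph{same} order as the term you kept. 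For $a\neq0$ one has $z_n=-\beta n^2+O(n)$, $b+6\beta n d_n\sim 6a\beta n^2$ and $\phi(\beta n^2)\sim a\beta^2 n^4$, hence
\[
\phi^{[n]}(z_n)=\bigl(1-6+1\bigr)a\beta^2 n^4+O(n^3)=-4a\beta^2 n^4+O(n^3),
\]
which, multiplied by the prefactor $-(n+1)d_{n-1}/(d_{2n-1}d_{2n+1})\sim -1/(4a)$, gives $C_{n+1}/n^4\to\beta^2$, not $-\beta^2/4$. For $a=0$ (so $d\neq0$) one has $z_n=-3\beta n^2+O(n)$, and you must also keep the term $2\beta n\,\psi(\beta n^2)\sim 2\beta^2 d n^3$ in the constant part of $\phi^{[n]}$, so
\[
\phi^{[n]}(z_n)=(-18+2)\beta^2 d n^3+O(n^2)=-16\beta^2 d n^3+O(n^2),
\]
and the prefactor $-(n+1)/d$ then yields $C_{n+1}/n^4\to 16\beta^2$, not $18\beta^2$. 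In short, the method is the right one and matches the paper's (unwritten) argument, but the decisive bookkeeping---which is the entire content of the corollary---must actually be carried out; once the two omitted contributions above are included, your own plan confirms the stated constants $\beta^2$, $16\beta^2$, $-2\beta$, $-8\beta$.
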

For the case of a $q$-linear lattice in Theorem \ref{main-Thm1} (respectively linear lattice in Theorem \ref{mainthmquadratics}) we refer the reader to the reference \cite{RKDP2020} (respectively \cite{KCJPLN, MP1994}).

\section{Applications: characterization theorems}
The objective of this paragraph is to show how previous results can be used to solve some interesting problems. For this purpose let us recall some monic OPS needed here. We will consider a monic $(P_n)_{n\geq 0}$ satisfying the following three term recurrence relation (TTRR): 
\begin{align}\label{TTRR_relation}
P_{-1} (z)=0, \quad P_{n+1} (z) =(z-B_n)P_n (z)-C_n P_{n-1} (z) \quad (C_n \neq 0)\;,
\end{align}
with $$B_n=\frac{\left\langle {\bf u}, zP_n ^2\right\rangle}{ \left\langle {\bf u}, P_n ^2\right\rangle}\;,\quad C_{n+1}=\frac{\left\langle {\bf u}, P_{n+1} ^2\right\rangle }{ \left\langle {\bf u}, P_n ^2\right\rangle}    \;.$$
Recall that the monic Askey-Wilson polynomial, $(Q_n(\cdot; a_1, a_2, a_3, a_4 | q))_{n\geq 0}$, satisfy \eqref{TTRR_relation} (see \cite[(14.1.5)]{KLS2010}) with
\begin{align*}
B_n &= a_1+\frac{1}{a_1}-\frac{(1-a_1a_2q^n)(1-a_1a_3q^n)(1-a_1a_4q^n)(1-a_1a_2a_3a_4q^{n-1})}{a_1(1-a_1a_2a_3a_4q^{2n-1})(1-a_1a_2a_3a_4q^{2n})}\\[7pt]
&\quad-\frac{a_1(1-q^n)(1-a_2a_3q^{n-1})(1-a_2a_4q^{n-1})(1-a_3a_4q^{n-1})}{(1-a_1a_2a_3a_4q^{2n-1})(1-a_1a_2a_3a_4q^{2n-2})},\\[7pt]
C_{n+1}&=(1-q^{n+1})(1-a_1a_2a_3a_4q^{n-1}) \\[7pt]
&\quad\times \frac{(1-a_1a_2q^n)(1-a_1a_3q^n)(1-a_1a_4q^n)(1-a_2a_3q^n)(1-a_2a_4q^n)(1-a_3a_4q^n)}{4(1-a_1a_2a_3a_4q^{2n-1})(1-a_1a_2a_3a_4q^{2n})^2 (1-a_1a_2a_3a_4q^{2n+1})}
\end{align*}
and subject to the following restrictions (see \cite{KCDMJP2022classical}):
$$
\begin{array}l
(1-a_1a_2a_3a_4q^n)(1-a_1a_2q^n)(1-a_1a_3q^n) \\[7pt]
\qquad\quad\times(1-a_1a_4q^n)(1-a_2a_3q^n)(1-a_2a_4q^n)(1-a_3a_4q^n) \neq 0\;.
\end{array}
$$
The monic Meixner polynomials of the second kind, $(M_n(\cdot;b_1,b_2))_{n\geq 0}$, are defined by (see \cite[p.179, (3.17)]{C1978})
\begin{align}
zM_n(z;b_1,b_2)&=M_{n+1}(z;b_1,b_2)-b_1(2n+b_2) M_n(z;b_1,b_2) \label{Meixner2nd} \\ 
&\quad +(b_1^2+1)n(n+b_2-1)M_{n-1}(z;b_1,b_2),\quad M_{-1}(z;b_1,b_2)=0\;, \nonumber
\end{align}
where $b_1$ and $b_2$ are parameters so that $b_1^2\neq-1$ and $b_2\neq0,-1,-2,\ldots$, while the monic Al-Salam polynomials $(A_n(\cdot;a,b|q))_{n\geq 0}$ satisfy \eqref{TTRR_relation} with $B_n=(a+b)q^n/2$ and $C_{n+1}=(1-abq^n)(1-q^{n+1})/4$ for each $n=0,1,\ldots$ (see \cite{I2005}). In addition, the continuous monic dual $q$-Hahn polynomials, $(R_n(x;a,b|q))_{n\geq 0}$, satisfies \eqref{TTRR_relation} with
$a_n=(a+a^{-1}-a(1-q^n)(1-bcq^{n-1}) -(1-abq^n)(1-acq^n)/a)/2$ and $b_n=(1-abq^n)(1-acq^n)(1-bcq^n)(1-q^{n+1})/4$ .

\subsection{A first case}
Let $(P_n)_{n\geq 0}$ be a monic OPS such that 
\begin{align}\label{first-charact}
\mathrm{D}_x P_{n+1}(z)=k_n\mathrm{S}_xP_{n}(z)\quad (n=0,1,\ldots;\;k_n\in \mathbb{C})\;.
\end{align}
Our goal in this section is to characterize all such OPS for quadratic and $q$-quadratic lattices. These results are presented in \cite{KCDMJP2021rama}.

\begin{lemma}\label{lemma-exple1}
Let $(P_n)_{n\geq 0}$ be a monic OPS with respect to ${\bf u}\in \mathcal{P}^*$. Assume that \eqref{first-charact} holds. Then ${\bf u}$ is $x$-classical. Moreover, 
\begin{align}
{\bf D}_x(\phi {\bf u})={\bf S}_x (\psi {\bf u})\;, \label{true-x-GP-equation}
\end{align}
where
\begin{align*}
\psi(z)=B_0-z,\quad 
\phi(z)=\left\{
\begin{array}{lcl}
(\alpha -\alpha^{-1})(z-\mathfrak{c}_3)(z-B_0)+\alpha^{-1}C_1, &  q\neq1\;,\\ [7pt]
2\beta(z-B_0)+C_1, &  q =1\;.
\end{array}
\right.
\end{align*}
\end{lemma}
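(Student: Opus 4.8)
The plan is to work directly with the dual-basis expansion of $\mathbf{D}_x(\phi{\bf u})$ and $\mathbf{S}_x(\psi{\bf u})$ relative to the monic OPS $(P_n)_{n\geq0}$, turning the hypothesis \eqref{first-charact} into a statement about functionals. First I would record the basic duality: since $(P_n)_{n\geq0}$ is a monic OPS with respect to ${\bf u}$, its dual basis is ${\bf a}_n=\langle{\bf u},P_n^2\rangle^{-1}P_n{\bf u}$ by \eqref{expression-an}. Using the definitions \eqref{PNUL-def-Dxu-Sxu} of $\mathbf{D}_x$ and $\mathbf{S}_x$ together with $\deg(\mathrm{D}_xf)=\deg f-1$ and $\deg(\mathrm{S}_xf)=\deg f$, I would compute $\langle\mathbf{D}_x{\bf u},P_m\rangle=-\langle{\bf u},\mathrm{D}_xP_m\rangle$, which vanishes for $m=0$ and also for $m\geq2$ once one observes that $\mathrm{D}_xP_m$ has degree $m-1\geq1$ and, by \eqref{first-charact} read backwards, is a nonzero multiple of $\mathrm{S}_xP_{m-1}$; expanding $\mathrm{S}_xP_{m-1}$ in the $P_j$'s and pairing against ${\bf u}$ kills everything except possibly the degree-$0$ term, and that term is handled by orthogonality. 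The upshot is that $\mathbf{D}_x{\bf u}$ is a multiple of ${\bf a}_0={\bf u}/\langle{\bf u},1\rangle$, hence of ${\bf u}$ itself — no, more carefully, one finds $\mathbf{D}_x{\bf u}=\lambda\,{\bf a}_1$ for a suitable constant, i.e. a multiple of $P_1{\bf u}$. I would then similarly analyze $\mathbf{S}_x{\bf u}$.

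The cleaner route, which I would actually pursue, is to guess the Pearson pair from the structure of \eqref{first-charact} for $n=0$ and $n=1$ and then verify it. Taking $n=0$ in \eqref{first-charact} gives $\mathrm{D}_xP_1=k_0\mathrm{S}_xP_0=k_0$, and since $P_1(z)=z-B_0$ we get $\mathrm{D}_xP_1=1$, so $k_0=1$. Taking $n=1$ gives $\mathrm{D}_xP_2=k_1\mathrm{S}_xP_1=k_1\mathrm{S}_x(z-B_0)$, and since $\mathrm{S}_x(z)=x(s)$ evaluated symmetrically equals $\alpha z+\beta$ (from the displayed identity just after \eqref{alpha-beta}, with the normalization there), this pins down $k_1$ and the leading behaviour. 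Armed with the candidate $\psi(z)=B_0-z$ and the stated $\phi$, I would verify \eqref{true-x-GP-equation} by checking that the functional ${\bf v}:=\mathbf{D}_x(\phi{\bf u})-\mathbf{S}_x(\psi{\bf u})$ annihilates every $P_m$. Using \eqref{a} and \eqref{1} to expand $\mathbf{D}_x(\phi{\bf u})$ and $\mathbf{S}_x(\psi{\bf u})$ in terms of $\mathbf{D}_x{\bf u}$ and $\mathbf{S}_x{\bf u}$, and then using $\langle\mathbf{D}_x{\bf u},P_m\rangle=-\langle{\bf u},\mathrm{D}_xP_m\rangle$ and $\langle\mathbf{S}_x{\bf u},P_m\rangle=\langle{\bf u},\mathrm{S}_xP_m\rangle$, I can reduce $\langle{\bf v},P_m\rangle$ to a linear combination of quantities $\langle{\bf u},P_j\,\mathrm{D}_xP_m\rangle$ and $\langle{\bf u},P_j\,\mathrm{S}_xP_m\rangle$ with $j\leq2$; the hypothesis \eqref{first-charact} replaces $\mathrm{D}_xP_m$ by $k_{m-1}\mathrm{S}_xP_{m-1}$ throughout, and then a degree count plus orthogonality forces all the relevant pairings to vanish for $m$ large, while the finitely many small $m$ are checked by the explicit $n=0,1$ computations above. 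Since $\{P_m\}$ is a basis of $\mathcal{P}$, ${\bf v}=0$, which is \eqref{true-x-GP-equation}.

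Once \eqref{true-x-GP-equation} is established with $\phi\in\mathcal{P}_2$, $\psi\in\mathcal{P}_1$ both nonzero (note $\psi=B_0-z$ is visibly nonzero, and $\phi$ is nonzero because its leading coefficient $\alpha-\alpha^{-1}\neq0$ when $q\neq1$, resp. $2\beta=\mathfrak c_4/2\neq0$ on a genuine quadratic lattice, resp. $C_1\neq0$ in the degenerate directions), the $x$-classical conclusion follows immediately from Definition \ref{NUL-def}: ${\bf u}$ is regular by hypothesis (it has an OPS), and it satisfies the $x$-GP equation, so it is $x$-classical by definition. I would also remark that admissibility of $(\phi,\psi)$ — needed implicitly for consistency — is automatic here by Theorem \ref{x-admissible}.

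The main obstacle I anticipate is the bookkeeping in the verification step: expanding $\mathbf{D}_x(\phi{\bf u})$ and $\mathbf{S}_x(\psi{\bf u})$ via \eqref{a}--\eqref{1} introduces the auxiliary polynomials $\texttt{U}_1,\texttt{U}_2$ and the operators $\mathrm{D}_x,\mathrm{S}_x$ applied to the quadratic $\phi$, and one must keep careful track of the $q\neq1$ versus $q=1$ cases and of the precise constants $\alpha,\beta,\mathfrak c_3$ so that the degree-$0$ and degree-$1$ contributions match exactly the values dictated by $k_0=1$ and the $n=1$ instance of \eqref{first-charact}. An alternative that sidesteps some of this is to run the whole argument through the ${\bf u}^{[1]}$ machinery of \eqref{uk-func-Dx} and Theorem \ref{x-admissible}, recognizing $(\mathrm{D}_xP_{n+1})_{n\geq0}$ (up to normalization) as the OPS $(P_n^{[1]})_{n\geq0}$ and hence ${\bf u}^{[1]}$ as a classical functional forcing ${\bf u}$ classical; but the direct verification above is more self-contained and yields the explicit $\phi,\psi$ demanded by the statement.
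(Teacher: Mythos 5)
Your plan is ``guess the pair $(\phi,\psi)$ and verify that ${\bf v}:=\mathbf{D}_x(\phi{\bf u})-\mathbf{S}_x(\psi{\bf u})$ kills every $P_m$'', but the verification step contains a genuine gap precisely where the work of the lemma lies. After using $\langle\mathbf{D}_x(\phi{\bf u}),P_m\rangle=-\langle{\bf u},\phi\,\mathrm{D}_xP_m\rangle$, $\langle\mathbf{S}_x(\psi{\bf u}),P_m\rangle=\langle{\bf u},\psi\,\mathrm{S}_xP_m\rangle$ and the hypothesis \eqref{first-charact}, you are left with pairings of the form $\langle{\bf u},g\,\mathrm{S}_xP_i\rangle$ with $\deg g\le 2$ and $i\in\{m-1,m\}$, and you assert that ``a degree count plus orthogonality'' makes these vanish for large $m$. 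It does not: orthogonality kills $\langle{\bf u},gP_i\rangle$ for $i>\deg g$, but $\mathrm{S}_xP_i$ (equivalently, via \eqref{first-charact}, $\mathrm{D}_xP_{i+1}$) is not a $P_j$, and $\langle{\bf u},g\,\mathrm{S}_xP_i\rangle$ is governed by the low-degree components of $\mathrm{S}_xP_i$ in the basis $(P_j)_{j\geq0}$, about which the hypothesis says nothing directly; trying to control them by re-applying \eqref{a}, \eqref{1} or \eqref{PropNUL1}--\eqref{PropNUL2} just reproduces pairings of the same type with $m$ shifted upward, so the argument is circular. In fact, the vanishing you need is equivalent to showing that $\mathbf{S}_x{\bf u}$ and $\mathbf{D}_x{\bf u}$ are finite combinations of the dual functionals ${\bf a}_0,{\bf a}_1,{\bf a}_2$, which is essentially the conclusion of the lemma, so the verification begs the question. (The same issue undermines your first paragraph: the claim $\mathbf{D}_x{\bf u}=\lambda{\bf a}_1$ is not obtained by ``orthogonality handling the degree-$0$ term''; and your fallback through the ${\bf u}^{[1]}$ machinery would require knowing that $\big(P_n^{[1]}\big)_{n\geq0}$ is itself an OPS, which is not part of the hypothesis.)

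The paper supplies exactly the missing mechanism by working with dual bases rather than testing against the $P_m$ directly. From \eqref{first-charact} one gets $k_n=\alpha_n^{-1}\gamma_{n+1}$ and then $\langle{\bf S}_x{\bf a}_n^{[1]},P_j\rangle=\alpha_j\delta_{n,j}$, hence ${\bf S}_x{\bf a}_n^{[1]}=\alpha_n{\bf a}_n$, while pure duality gives ${\bf D}_x{\bf a}_n^{[1]}=-\gamma_{n+1}{\bf a}_{n+1}$; these two identities are the finite-expansion input your sketch lacks. Applying $\mathbf{D}_x$, using the commutation formula \eqref{def Dx^nSx-u} with $n=1$ and the identity $\texttt{U}_1\mathbf{D}_x{\bf a}_{n+1}=\alpha\mathbf{D}_x(\texttt{U}_1{\bf a}_{n+1})-(\alpha^2-1)\mathbf{S}_x{\bf a}_{n+1}$, one obtains ${\bf D}_x\big(\alpha_n{\bf a}_n+\gamma_{n+1}\texttt{U}_1{\bf a}_{n+1}\big)=-\alpha\gamma_{n+1}{\bf S}_x{\bf a}_{n+1}$, and taking $n=0$ together with \eqref{expression-an} and the TTRR yields \eqref{true-x-GP-equation} with the stated $\phi$ and $\psi$ (so the pair is derived, not guessed). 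If you want to salvage your approach, you must first prove a statement of that kind (e.g.\ that $\mathbf{S}_x{\bf u}$ and $\mathbf{D}_x{\bf u}$ annihilate all $P_m$ with $m$ beyond a fixed small index); as written, that step is missing.
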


\begin{proof}
By identifying the leading coefficient on each member of \eqref{first-charact} using \eqref{Dx-xn}--\eqref{Sx-xn}, we obtain $k_n=\alpha_n ^{-1}\gamma_{n+1}$ . Let $({\bf a}_n)_{n\geq 0}$ and $({\bf a} ^{[1]} _n)_{n\geq 0}$ be the dual basis associated to the sequences $(P_n)_{n\geq 0}$ and $(P ^{[1]} _n)_{n\geq 0}$, respectively. We claim that 
\begin{align}\label{kn-and-Sxan1}
{\bf S}_x{\bf a}^{[1]} _n=\alpha_n {\bf a}_n\;.
\end{align}
Indeed, by \eqref{first-charact}, we have 
$$\left\langle {\bf S}_x {\bf a}^{[1]} _n,P_j  \right\rangle =\left\langle  {\bf a}^{[1]} _n,\mathrm{S}_xP_j  \right\rangle = k_j ^{-1}\gamma_{j+1} \left\langle {\bf a}^{[1]} _n,P^{[1]} _{j}  \right\rangle=\alpha_j \delta_{n,j}, \quad (j=0,1,\dots)\;.$$ 
Hence
$$ {\bf S}_x {\bf a}_n ^{[1]} =\sum_{j=0} ^{+\infty} \left\langle {\bf S}_x {\bf a}^{[1]} _n,P_j  \right\rangle {\bf a}_j=\alpha_n {\bf a}_n\;.$$
We now apply the operator ${\bf D}_x$ to \eqref{kn-and-Sxan1}, using \eqref{def Dx^nSx-u} (for $n=1$ and replacing ${\bf u}$ by ${\bf a}_n ^{[1]}$) and  the fact that
${\bf D}_x  {\bf a}^{[1]} _n=-\gamma_{n+1}{\bf a}_{n+1}$, to obtain
\begin{align*}
-\alpha\alpha_n {\bf D}_x {\bf a}_n =(2\alpha ^2-1)\gamma_{n+1}{\bf S}_x {\bf a}_{n+1} +\gamma_{n+1}\texttt{U}_1 {\bf D}_x {\bf a}_{n+1}\;.
\end{align*}
Taking into that
$\texttt{U}_1 {\bf D}_x {\bf a}_{n+1}=\alpha {\bf D}_x (\texttt{U}_1 {\bf a}_{n+1})-(\alpha^2-1){\bf S}_x {\bf a}_{n+1}$ holds, the previous equality becomes
\begin{align*}
{\bf D}_x (\alpha_n {\bf a}_{n}+\gamma_{n+1}\texttt{U}_1{\bf a}_{n+1})=-\alpha \gamma_{n+1}{\bf S}_x {\bf a}_{n+1}\;.
\end{align*}
The desired result follows from the previous equation by taking $n=0$ and by using successively \eqref{expression-an} and \eqref{TTRR_relation}.
\end{proof}

\begin{theorem}\label{ThmMain1}
Consider the lattice $x(s)=\mathfrak{c}_1q^{-s} +\mathfrak{c}_2 q^s +\mathfrak{c}_3$ with $\mathfrak{c}_1\mathfrak{c}_2\neq 0$. 
Then, up to an affine transformation of the variable, the only monic OPS, $(P_n)_{n\geq 0}$, satisfying 
\begin{align}\label{equat-solving}
\mathrm{D}_xP_{n+1}(z)=\alpha_n^{-1}\gamma_{n+1}\mathrm{S}_xP_n(z)\;,
\end{align}
are those of the Askey-Wilson polynomials
$$
P_n(z)=2^n(\mathfrak{c}_1\mathfrak{c}_2)^{n/2}Q_n\left(\frac{z-\mathfrak{c}_3}{2\sqrt{\mathfrak{c}_1\mathfrak{c}_2}};a,-a,iq^{-1/2}/a,-iq^{-1/2}/a\Big|q\right)\;, 
$$
with $a\notin \left\lbrace \pm q^{(n-1)/2}, \pm iq^{-n/2}\,|\,n=0,1,\ldots\right\rbrace$.
\end{theorem}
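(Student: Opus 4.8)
The plan is to apply Lemma~\ref{lemma-exple1} and Theorem~\ref{main-Thm1} to pin down the functional ${\bf u}$ explicitly and then identify the resulting OPS with the Askey-Wilson family. First I would invoke Lemma~\ref{lemma-exple1}: since $(P_n)_{n\geq0}$ satisfies \eqref{first-charact} with $k_n=\alpha_n^{-1}\gamma_{n+1}$, the functional ${\bf u}$ is $x$-classical and satisfies the $x$-GP equation \eqref{true-x-GP-equation} with $\psi(z)=B_0-z$ and $\phi(z)=(\alpha-\alpha^{-1})(z-\mathfrak{c}_3)(z-B_0)+\alpha^{-1}C_1$. Writing $\phi(z)=az^2+bz+c$ and $\psi(z)=dz+e$ in the notation of Theorem~\ref{main-Thm1}, we read off $d=-1$ and $a=\alpha-\alpha^{-1}=(q^{1/2}-q^{-1/2})(q^{1/2}+q^{-1/2})/2\,\cdot\,\text{(sign)}$; in particular $a\neq0$ since $q\neq1$. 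A translation of the variable normalizes $\mathfrak{c}_3=0$, and a dilation normalizes $\mathfrak{c}_1\mathfrak{c}_2$ to a convenient value (say $\tfrac14$, matching the Askey-Wilson argument $x=\cos\theta$), so that the lattice becomes the standard $q$-quadratic lattice; the constants $B_0,C_1$ remain as the only free parameters of ${\bf u}$ at this stage.

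Next I would compute the admissibility quantities $d_n=a\gamma_n+d\alpha_n$ and $e_n=\phi'(\mathfrak{c}_3)\gamma_n+\psi(\mathfrak{c}_3)\alpha_n$ from Theorem~\ref{main-Thm1} and then the recurrence coefficients $B_n,C_{n+1}$ via \eqref{Bn-Dx}--\eqref{Cn-Dx}, together with $\phi^{[n]}$ from \eqref{phi-explicit}. The key computational step is to match these explicit $B_n$ and $C_{n+1}$ against the Askey-Wilson $B_n$ and $C_{n+1}$ recorded in the paper with the specific parameter choice $(a_1,a_2,a_3,a_4)=(a,-a,iq^{-1/2}/a,-iq^{-1/2}/a)$. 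This parameter choice is engineered precisely so that $a_1a_2=-a^2$, $a_3a_4=q^{-1}/a^2\cdot(-1)\cdot(-1)\cdot\ldots$ — more to the point, $a_1a_3a_4a_2 = a\cdot(-a)\cdot(iq^{-1/2}/a)\cdot(-iq^{-1/2}/a)=q^{-1}$, so the ubiquitous product $a_1a_2a_3a_4=q^{-1}$ collapses many of the factors $(1-a_1a_2a_3a_4q^{m})$ to $(1-q^{m-1})$, and the Askey-Wilson $B_n$ simplifies to an expression with the same rational structure in $q^n$ as the $B_n$ produced by Theorem~\ref{main-Thm1}. One then solves the two-parameter matching: $B_0$ and $C_1$ are free in our family, and $a$ is the single free parameter on the Askey-Wilson side (the others being determined by $a$), so comparing $B_0$ and $C_1$ fixes the relation between $(B_0,C_1)$ and $a$; the higher $B_n,C_n$ then agree automatically because both sequences satisfy the same three-term recurrence once the first data match, or — more cleanly — because both are rational functions of $q^n$ of the same degree agreeing at enough points. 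The leading normalization constant $2^n(\mathfrak{c}_1\mathfrak{c}_2)^{n/2}$ simply accounts for the affine change of variable between $z$ and the standard Askey-Wilson variable.

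The regularity constraints are handled by translating \eqref{le1a} through the explicit formulas: $d_n\neq0$ for all $n$ together with $\phi^{[n]}(\mathfrak{c}_3-e_n/d_{2n})\neq0$ for all $n$ becomes, after the identifications above, precisely the stated condition $a\notin\{\pm q^{(n-1)/2},\ \pm iq^{-n/2}\ :\ n=0,1,\ldots\}$; indeed $d_n=a\gamma_n-\alpha_n$ vanishes exactly when $q^{n/2}$ or $q^{-n/2}$ equals a root involving $a$, and $C_{n+1}=0$ via \eqref{Cn-Dx} corresponds to the vanishing of one of the Askey-Wilson factors $(1-a_ia_jq^n)$, which for the given parameters are exactly $(1\mp a^2q^n)$ and $(1\pm a^{-2}q^{n-1})$ — i.e. $a=\pm q^{-n/2}$ or $a=\pm iq^{-(n-1)/2}$ up to reindexing — matching the excluded set. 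Conversely, Theorem~\ref{main-Thm1} guarantees that whenever these conditions hold ${\bf u}$ is regular and its monic OPS is the one with the computed $B_n,C_n$, hence is Askey-Wilson; this gives the "only" direction, since Lemma~\ref{lemma-exple1} forces \emph{any} solution of \eqref{equat-solving} to be of this form.

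The main obstacle I anticipate is the bookkeeping in the parameter-matching step: showing that the rational-in-$q^n$ expressions for $B_n$ and $C_{n+1}$ coming from \eqref{Bn-Dx}--\eqref{Cn-Dx} coincide, after the substitution $d=-1$, $a=\alpha-\alpha^{-1}$, $\mathfrak{c}_3=0$, with the (heavily simplified) Askey-Wilson coefficients at $(a,-a,iq^{-1/2}/a,-iq^{-1/2}/a)$ requires careful algebraic manipulation of products of the form $(1-\lambda q^n)$ and of the $q$-numbers $\alpha_n,\gamma_n$; a clean way to organize it is to verify the identity of two rational functions of the single variable $t=q^n$ by checking agreement of numerators and denominators as polynomials in $t$, rather than expanding term by term. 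A secondary subtlety is verifying that the special parameter tuple does not itself already violate the Askey-Wilson non-degeneracy restriction $(1-a_ia_jq^n)\neq0$ except precisely on the excluded set for $a$, so that the correspondence between "regular ${\bf u}$" and "admissible Askey-Wilson parameters" is exact; this is a matter of listing the six products $a_ia_j$ for the given tuple and reading off when each vanishes.
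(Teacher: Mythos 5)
There is a genuine gap at the heart of your matching step: you never show that the structure relation forces $B_0=\mathfrak{c}_3$ (equivalently $B_n\equiv\mathfrak{c}_3$), and without that the argument cannot close. After Lemma \ref{lemma-exple1} you are left with a \emph{two}-parameter family of candidate functionals (parameters $B_0$ and $C_1$), while the target Askey--Wilson subfamily has only one free parameter $a$; your plan to ``compare $B_0$ and $C_1$'' with the Askey--Wilson data and then conclude that the higher $B_n,C_n$ ``agree automatically'' is not valid --- two distinct OPS can share $B_0$ and $C_1$, and agreement of two rational functions of $q^n$ at one or two values of $n$ proves nothing. Indeed, for $B_0\neq\mathfrak{c}_3$ the pair $(\phi,\psi)$ of Lemma \ref{lemma-exple1} still yields (for generic $C_1$) a regular, $x$-classical functional, but its OPS does \emph{not} satisfy \eqref{equat-solving}; so regularity plus the GP equation alone cannot single out the stated family, and the ``only'' direction fails as you have set it up. The paper closes exactly this gap by extracting extra information from \eqref{equat-solving} itself: identifying the second-highest coefficients on both sides (via \eqref{Dx-xn}--\eqref{Sx-xn}) gives $B_n=\mathfrak{c}_3+\frac{\alpha}{\alpha_{n-1}\alpha_n}(B_0-\mathfrak{c}_3)$, while \eqref{Bn-Dx} applied to the $(\phi,\psi)$ of Lemma \ref{lemma-exple1} gives a different rational expression in $q^{n}$; equating the two forces $B_0=\mathfrak{c}_3$, after which $\phi(z)=-(\alpha-\alpha^{-1})(z-\mathfrak{c}_3)^2-\alpha^{-1}C_1$, $\psi(z)=z-\mathfrak{c}_3$, and $C_1$ is the only surviving parameter. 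Only then does the paper reparametrize $C_1$ by a root $r$ of a quadratic, compute $C_{n+1}$ in closed product form from \eqref{Cn-Dx}, read off the regularity condition $r\notin\{q^{n-1},-q^{-n}\}$, and identify the result with the Askey--Wilson parameters $(\sqrt r,-\sqrt r,\,i/\sqrt{rq},\,-i/\sqrt{rq})$, $r=a^2$.

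A secondary slip: for the tuple $(a,-a,iq^{-1/2}/a,-iq^{-1/2}/a)$ one has $a_1a_2a_3a_4=-q^{-1}$, not $q^{-1}$, so the factors $(1-a_1a_2a_3a_4q^{m})$ become $(1+q^{m-1})$ rather than $(1-q^{m-1})$; the simplification you invoke is not the one that actually occurs (and of the six products $a_ia_j$, four equal $\pm iq^{-1/2}$ and never produce vanishing factors for $q>0$, the relevant ones being $-a^2$ and $q^{-1}/a^2$). This is repairable bookkeeping, but it underlines that the identification is more safely organized around the explicit formulas \eqref{Bn-Dx}--\eqref{Cn-Dx} of Theorem \ref{main-Thm1}, as the paper does, rather than by massaging the general Askey--Wilson recurrence coefficients.
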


\begin{proof}
Let ${\bf u}\in \mathcal{P}^*$ be the regular functional with respect to which $(P_n)_{n\geq 0}$ is an OPS.  We claim that $B_n$, in \eqref{TTRR_relation}, is given by
\begin{align}\label{Bn-sol-q-quadratic}
B_n =\mathfrak{c}_3\quad (n=0,1,\ldots)\;.
\end{align}  
Indeed, recall that $P_n(z)=z^n+f_n z^{n-1}+\cdots$, where $f_0=0$ and $f_n=-\sum_{j=0} ^{n-1}B_j$  $(n=1,2,\dots)$.
We then use \eqref{Dx-xn}--\eqref{Sx-xn} and identify the second coefficient of higher degree in both sides of \eqref{equat-solving}. This gives
$$
\frac{\alpha_n}{\gamma_{n+1}}f_{n+1}=\frac{\alpha_{n-1}}{\gamma_n}f_n +\frac{n\alpha-\alpha_n\gamma_n}{\gamma_{n}\gamma_{n+1}}\mathfrak{c}_3\;.
$$
By the telescopic sum method, we have
\begin{align*}
f_n=-\frac{\gamma_n}{\alpha_{n-1}}\left(B_0 +\mathfrak{c}_3\sum_{j=1} ^{n-1}\frac{\alpha_j\gamma_j- \alpha j}{\gamma_{j}\gamma_{j+1}}    \right)=-\frac{\gamma_n}{\alpha_{n-1}}(B_0-\mathfrak{c}_3)-n\mathfrak{c}_3\;.
\end{align*}
Therefore we obtain
\begin{align}\label{first-Bn-q}
B_n =f_n-f_{n+1}=\mathfrak{c}_3+\frac{\alpha}{\alpha_{n-1}\alpha_n}(B_0-\mathfrak{c}_3)\;.
\end{align}
By Lemma \ref{lemma-exple1}, ${\bf u}$ satisfies \eqref{true-x-GP-equation} where $\phi(z)=-(\alpha-\alpha^{-1})(z-B_0)(z-\mathfrak{c}_3)-\alpha^{-1}C_1$ and $\psi(z)=z-B_0$. From \eqref{Bn-Dx} we obtain
\begin{align}\label{second-Bn-q}
B_n=\mathfrak{c}_3 +(1+q)(B_0-\mathfrak{c}_3)q^{n-2}\frac{(q-1)(1-q^{2n-2})+(1+q)q^{n-1}}{(1+q^{2n-3})(1+q^{2n-1})}\;.
\end{align} 
It is not hard to see that the equality between \eqref{first-Bn-q} and \eqref{second-Bn-q} requires $B_0=\mathfrak{c}_3$ and so \eqref{Bn-sol-q-quadratic} follows.
Therefore, the above expressions for $\phi$ and $\psi$ reduce to
$$\phi(z)=-(\alpha-\alpha^{-1})(z-\mathfrak{c}_3)^2-\alpha^{-1}C_1\;,\quad
\psi(z)=z-\mathfrak{c}_3\;.$$ 
It follows that $C_1$ can be regarded as the only free parameter. 
From \eqref{Cn-Dx} we obtain $d_n= \alpha^{-1}\alpha_{n-1}$ and
\begin{align}
\phi ^{[n]}\Big(\mathfrak{c}_3-\frac{e_n}{d_{2n}}\Big)&=\frac{1}{2\alpha}\Big(\mathfrak{c}_1\mathfrak{c}_2(1-q^{-1})(1-q^{n})(1+q^{-n+1})-2C_1  \Big)\;. \label{Eqphinc3}
\end{align}
Choose a parameter $r$ as a solution of the quadratic equation
$$
(q-1)\mathfrak{c}_1\mathfrak{c}_2 Z^2 +2(C_1 +2(\alpha^2-1)\mathfrak{c}_1\mathfrak{c}_2)Z-(1-q^{-1})\mathfrak{c}_1\mathfrak{c}_2=0\;;
$$
that is
$$
r= \frac{C_1+2(\alpha^2-1)\mathfrak{c}_1\mathfrak{c}_2}{(1-q)\mathfrak{c}_1\mathfrak{c}_2} \pm \sqrt{q^{-1}+\Big( \frac{C_1+2(\alpha^2-1)\mathfrak{c}_1\mathfrak{c}_2}{(1-q)\mathfrak{c}_1\mathfrak{c}_2} \Big)^2}\;.
$$ 
Instead of $C_1$ we may consider $r$ as the free parameter and to express $C_1$ in terms of $r$ as follows:
$$C_1=\frac12(1-q^{-1})(1+r^{-1})(1-rq)\mathfrak{c}_1\mathfrak{c}_2\;.$$ 
Therefore \eqref{Eqphinc3} can be rewriten as
\begin{align*}
\phi ^{[n]}\Big(\mathfrak{c}_3-\frac{e_n}{d_{2n}}\Big)&=\mathfrak{c}_1\mathfrak{c}_2\frac{1-q}{2\alpha}(1+rq^{n})(1-r^{-1}q^{n-1})q^{-n}. 
\end{align*}
It follows from the regularity conditions that the free parameter $r$ should satisfy the condition $r\notin \left\lbrace q^{n-1}, -q^{-n}\,|\,n=0,1,\ldots\right\rbrace$.
Moreover, \eqref{Cn-Dx} yields
\begin{align}
C_{n+1}=\mathfrak{c}_1\mathfrak{c}_2 \frac{(1+q^{n-2})(1-q^{n+1})(1+rq^n)(1-r^{-1}q^{n-1})}{(1+q^{2n-2})(1+q^{2n})}.
\end{align}
Thus 
$$P_n(x)=2^n(\mathfrak{c}_1\mathfrak{c}_2)^{n/2}Q_n\left(\frac{z-\mathfrak{c}_3}{2\sqrt{\mathfrak{c}_1\mathfrak{c}_2}};\sqrt{r},-\sqrt{r},i/\sqrt{rq},-i/\sqrt{rq} \,\Big|\,q\right),$$ 
and setting $r=a^2$ the theorem follows.
\end{proof}

Following the same ideas, we may easily prove rigorously the following one.
 
\begin{corollary}\label{main-thm-quadratic}
Consider the lattice  $x(s)=4\beta s^2 +\mathfrak{c}_5 s +\mathfrak{c}_6$ with $(\beta,\mathfrak{c}_5)\neq (0,0)$.
Then there exist monic OPS, $(P_n)_{n\geq 0}$, satisfying 
\begin{align}\label{equat-solving2}
\mathrm{D}_xP_{n+1}(z)=(n+1)\mathrm{S}_xP_n(z)
\end{align}
if and only if $\beta=0$. In this case, up to an affine transformation of the variable, these polynomials are those of Meixner of the second kind
$$
P_n(z)=\left(\frac{i\mathfrak{c}_5}{2}\right)^nM_n\left(\frac{2i(B_0-z)}{\mathfrak{c}_5};0,-\frac{4C_1}{\mathfrak{c}_5 ^2}\right),
$$
with $B_0,C_1\in\mathbb{C}$ and  $4C_1/\mathfrak{c}_5^2\not\in\mathbb{N}_0$.
\end{corollary}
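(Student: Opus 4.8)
The plan is to mimic the proof of Theorem~\ref{ThmMain1} but in the quadratic setting, using Lemma~\ref{lemma-exple1} (whose $q=1$ branch already records $\psi(z)=B_0-z$ and $\phi(z)=2\beta(z-B_0)+C_1$) together with Theorem~\ref{mainthmquadratics} in place of Theorem~\ref{main-Thm1}. First I would identify the leading coefficients on both sides of \eqref{equat-solving2} to confirm the relation is the $q=1$ specialization of \eqref{first-charact} with $k_n=n+1=\gamma_{n+1}/\alpha_n$, so Lemma~\ref{lemma-exple1} applies and ${\bf u}$ is $x$-classical with the stated $\phi,\psi$. In particular $a=0$ and $d=-1$ here, so $d_n=an+d=-1\neq0$ for all $n$, meaning the admissibility part of \eqref{le1a-quadratic} is automatic and only the second condition $\phi^{[n]}(-\beta n^2-e_n/d_{2n})\neq0$ needs to be watched.

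Next I would run the second-coefficient identification, exactly as in \eqref{first-Bn-q}: writing $P_n(z)=z^n+f_nz^{n-1}+\cdots$ with $f_n=-\sum_{j<n}B_j$, plug into \eqref{equat-solving2} using \eqref{Dx-xn}--\eqref{Sx-xn} specialized to $q=1$ (where $\mathfrak{c}_1\mathfrak{c}_2$ is replaced by the appropriate quadratic-lattice data and $\mathfrak{c}_3$ by the lattice's linear-term contribution), solve the resulting first-order recurrence for $f_n$ by telescoping, and thereby get a first closed form for $B_n$ purely in terms of $B_0$ and the lattice constants. Independently, from \eqref{Bn-quadratic} with $d_n=-1$, $e_n=bn+e+2\beta dn^2=bn+e-2\beta n^2$, compute a second closed form for $B_n$. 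Comparing the two forced expressions for $B_n$ — this is the analogue of equating \eqref{first-Bn-q} with \eqref{second-Bn-q} — should yield both the constraint that forces $\beta=0$ and, when $\beta=0$, pin down $b,e$ in terms of $B_0$. Concretely I expect that when $\beta\neq0$ the two expressions for $B_n$ grow at incompatible rates (one like a bounded/rational-in-$n$ correction to a constant, the other, via the $-2\beta n(n-1)$ term in \eqref{Bn-quadratic}, genuinely quadratic in $n$), giving the ``only if'' direction; and when $\beta=0$ the lattice is linear, $\phi(z)=C_1$ is constant, $\psi(z)=B_0-z$, and Theorem~\ref{mainthmquadratics} (or rather its linear-lattice counterpart from \cite{KCJPLN,MP1994}) produces a one-parameter family with $C_{n+1}$ proportional to $(b_1^2+1)n(n+b_2-1)$-type expressions matching \eqref{Meixner2nd}.

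For the ``if'' direction ($\beta=0$) I would then compute $C_{n+1}$ from \eqref{Cn-quadratic}: with $d_n=-1$ and $\phi^{[n]}(z)=az^2+(b+6\beta nd_n)z+\cdots$ collapsing (since $a=0,\beta=0$) to a constant multiple of $C_1$ shifted appropriately, evaluate $\phi^{[n]}$ at $-\beta n^2-e_n/d_{2n}=e_n=bn+e$, simplify to something of the form $C_1+(\text{quadratic in }n)$, and recognize the recurrence coefficients $B_n=b_1(2n+b_2)$-type, $C_{n+1}=(b_1^2+1)n(n+b_2-1)$-type as those of the monic Meixner polynomials of the second kind in \eqref{Meixner2nd}. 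Matching the lattice normalization $x(s)=\mathfrak{c}_5 s+\mathfrak{c}_6$ to the argument $2i(B_0-z)/\mathfrak{c}_5$ and identifying $b_2=-4C_1/\mathfrak{c}_5^2$ gives the stated formula, while the regularity condition $\phi^{[n]}(e_n)\neq0$ translates into $4C_1/\mathfrak{c}_5^2\notin\mathbb{N}_0$ (equivalently $b_2\neq0,-1,-2,\ldots$ fails only at nonnegative integers after the sign flip). The main obstacle I anticipate is purely bookkeeping: correctly carrying the quadratic-lattice constants through the $q\to1$ specializations of \eqref{Dx-xn}--\eqref{Sx-xn} and \eqref{Bn-quadratic}--\eqref{Cn-quadratic}, and in particular making the two independently-derived formulas for $B_n$ line up symbolically so that the dichotomy $\beta=0$ versus $\beta\neq0$ emerges cleanly rather than being buried in an algebraic identity; everything else parallels Theorem~\ref{ThmMain1} line by line.
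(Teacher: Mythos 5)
Your plan coincides with the paper's intended argument: the text gives no separate proof of this corollary, stating only that it follows the same ideas as Theorem \ref{ThmMain1}, i.e.\ invoke Lemma \ref{lemma-exple1} (its $q=1$ branch, so $a=0$, $d=-1$ and admissibility is automatic) and then Theorem \ref{mainthmquadratics}, exactly as you propose, and the endgame ($\phi^{[n]}(B_0)=C_1-n\mathfrak{c}_5^2/4$, hence the condition $4C_1/\mathfrak{c}_5^2\notin\mathbb{N}_0$ and the identification with Meixner polynomials of the second kind via $b_1=0$, $b_2=-4C_1/\mathfrak{c}_5^2$) goes through as you describe. One forecast in your outline is off, though harmlessly: on the quadratic lattice the second coefficients of $\mathrm{D}_xz^n$ and $\mathrm{S}_xz^n$ are $\beta n(n-1)(2n-1)/3$ and $\beta n(2n-1)$, so the direct second-coefficient identification yields $B_n=B_0-2\beta n(n-1)$ --- itself quadratic in $n$, not a bounded correction to a constant --- while \eqref{Bn-quadratic} with $d_n=-1$, $e_n=B_0+2\beta n-2\beta dn^2|_{d=-1}$ yields $B_n=B_0-8\beta n(n-1)$; the two closed forms still disagree for every $n\geq2$ unless $\beta=0$, so the dichotomy you need emerges exactly as planned.
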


\subsection{A second case}

In this section instead of \eqref{first-charact}, we characterize monic OPS satisfying the following structure relation
\begin{align}\label{second_charact}
\mathrm{D}_xP_{n}(z)=c_n P_{n-1}(z) \quad (n=0,1,\ldots\;.)
\end{align} 
for quadratic and $q$-quadratic lattices. We start by showing that all monic OPS satisfying \eqref{second_charact} are $x$-classical and then we prove that the coefficients of the associated TTRR satisfy a system of non linear equations. These results appear in \cite{DMthesis}.
\begin{lemma}\label{conject-is-dx-classical}
We consider the $q$-quadratic lattice $x(s)=\mathfrak{c}_1q^{-s}+\mathfrak{c}_2q^s+\mathfrak{c}_3$. Let ${\bf u} \in \mathcal{P}^* $ be a regular functional such that its corresponding monic OPS $(P_n)_{n\geq0}$ satisfies \eqref{second_charact} subject to the condition $c_n\neq 0$, for each $n=0,1,\ldots$. Then ${\bf u}$ is $x$-classical. Moreover $D_x(\phi {\bf u})=S_x(\psi {\bf u})$, with $\psi$ and $\phi$ polynomials given by
\begin{align}\label{value-d-e-a}
\psi(z)=z-B_0,\quad \phi(z)=(\mathfrak{a}z-\mathfrak{b})(z-B_0 )-(\mathfrak{a}+\alpha)C_1\; ,
\end{align}
where
\begin{align}\label{coef-a-b-conject-classical}
\mathfrak{a}:= \alpha \frac{2C_1-C_2}{C_2} \;,
\quad \mathfrak{b}:= \beta -B_0 +2\alpha \frac{B_1C_1}{C_2} \;.
\end{align}
(Here, $B_0$, $B_1$, $C_1$, and $C_{2}$ are coefficients of the TTRR \eqref{TTRR_relation} satisfied by $(P_n)_{n\geq0}$.)
\end{lemma}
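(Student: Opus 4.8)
The plan is to mimic the strategy used in the proof of Lemma~\ref{lemma-exple1}: work with the dual bases, translate the structure relation \eqref{second_charact} into an identity between the dual functionals $({\bf a}_n)_{n\geq0}$ of $(P_n)_{n\geq0}$ and the dual functionals $({\bf a}_n^{[1]})_{n\geq0}$ of $(P_n^{[1]})_{n\geq0}$, and then push this identity through the operators ${\bf D}_x$ and ${\bf S}_x$ until a $x$-GP functional equation of the form $\mathbf{D}_x(\phi{\bf u})=\mathbf{S}_x(\psi{\bf u})$ emerges, with $\phi\in\mathcal P_2$ and $\psi\in\mathcal P_1$ nonzero; regularity of ${\bf u}$ is assumed, so once such an equation is produced with an $x$-admissible pair, Definition~\ref{NUL-def} gives that ${\bf u}$ is $x$-classical. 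First I would identify the leading coefficients on both sides of \eqref{second_charact} via \eqref{Dx-xn}, which forces $c_n=\gamma_n$, so that by \eqref{Pnkx} the relation reads $P_n^{[1]}=P_n$ up to normalization, i.e. $D_xP_n=\gamma_n P_{n-1}$ is exactly the statement that $(P_n)_{n\geq0}$ and $(P_n^{[1]})_{n\geq0}$ coincide as monic sequences. Consequently ${\bf a}_n^{[1]}$ can be related to ${\bf a}_n$: testing against $P_j$ and using \eqref{second_charact} together with \eqref{PNUL-def-Dxu-Sxu} yields $\langle {\bf D}_x{\bf a}_n^{[1]},P_j\rangle=-\langle{\bf a}_n^{[1]},D_xP_j\rangle=-\gamma_j\langle{\bf a}_n^{[1]},P_{j-1}^{[1]}\rangle=-\gamma_{n+1}\delta_{n+1,j}$, hence ${\bf D}_x{\bf a}_n^{[1]}=-\gamma_{n+1}{\bf a}_{n+1}$, exactly as in Lemma~\ref{lemma-exple1}.

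Next I would exploit that $P_n^{[1]}=P_n$ more directly: this means ${\bf u}^{[1]}$ and ${\bf u}$ have the \emph{same} monic OPS, so ${\bf u}^{[1]}=\lambda{\bf u}$ for some nonzero constant $\lambda$. Feeding this into the definition \eqref{uk-func-Dx} of ${\bf u}^{[1]}$, namely ${\bf u}^{[1]}=\mathbf{D}_x(\texttt{U}_2\psi{\bf u})-\mathbf{S}_x(\phi{\bf u})$ with $\phi=\phi^{[0]}$, $\psi=\psi^{[0]}$ still to be determined, is one route; but cleaner is to run the argument of Lemma~\ref{lemma-exple1} backwards. Applying ${\bf D}_x$ to the relation ${\bf a}_n^{[1]}=\mu_n{\bf a}_n$ (for a suitable constant $\mu_n$ to be pinned down from $\langle{\bf a}_n^{[1]},P_j\rangle=\delta_{n,j}$ and the relation $P_n^{[1]}=P_n$, giving in fact $\mu_n=1$, so ${\bf a}_n^{[1]}={\bf a}_n$) and using \eqref{def Dx^nSx-u} with $n=1$ on ${\bf a}_n$, together with the commutation identity $\texttt{U}_1{\bf D}_x{\bf a}_{n+1}=\alpha{\bf D}_x(\texttt{U}_1{\bf a}_{n+1})-(\alpha^2-1){\bf S}_x{\bf a}_{n+1}$, one obtains a functional identity of the shape ${\bf D}_x(\text{linear in }z\cdot{\bf a}_n+\texttt{U}_1{\bf a}_{n+1})=(\text{scalar}){\bf S}_x{\bf a}_{n+1}$. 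Specializing to $n=0$ and $n=1$ and invoking \eqref{expression-an} to rewrite $P_0{\bf u}={\bf u}$, $P_1{\bf u}=(z-B_0){\bf u}$, $P_2{\bf u}$ via \eqref{TTRR_relation}, one arrives at two instances of an $x$-GP equation; comparing them (this is where the coefficients $B_0,B_1,C_1,C_2$ enter) forces $\phi$ and $\psi$ to have precisely the form \eqref{value-d-e-a}--\eqref{coef-a-b-conject-classical}. The constants $\mathfrak a,\mathfrak b$ are then read off by matching the $n=0$ and $n=1$ data, using \eqref{a} and \eqref{1} to expand $\mathbf{D}_x(\phi{\bf u})$ and $\mathbf{S}_x(\psi{\bf u})$ in terms of $\mathbf{D}_x{\bf u}$ and $\mathbf{S}_x{\bf u}$ and equating.

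The bookkeeping step I expect to be the main obstacle is the second one: producing the \emph{explicit} values $\mathfrak a=\alpha(2C_1-C_2)/C_2$ and $\mathfrak b=\beta-B_0+2\alpha B_1C_1/C_2$ requires carefully applying \eqref{a}, \eqref{1}, \eqref{expression-an}, and the recurrence \eqref{TTRR_relation} to the two relations obtained at $n=0,1$, then solving the resulting small linear system for the coefficients of $\phi$; the division by $C_2$ is what makes the regularity hypothesis $C_2\neq0$ indispensable, and one must check en route that the pair $(\phi,\psi)$ so produced is nonzero and $x$-admissible (the latter is automatic once ${\bf u}$ is regular, by Theorem~\ref{x-admissible}). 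Once \eqref{true-x-GP-equation}-type equation with these $\phi,\psi$ is in hand, $x$-classicality of ${\bf u}$ is immediate from Definition~\ref{NUL-def}, completing the proof.
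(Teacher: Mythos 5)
Your first paragraph is fine and coincides with the paper's opening moves: comparing leading coefficients gives $c_n=\gamma_n$, so \eqref{second_charact} says $P_n^{[1]}=P_n$, and dualizing gives ${\bf D}_x{\bf a}_0=-{\bf a}_1$, hence (via \eqref{expression-an} and \eqref{TTRR_relation}) ${\bf D}_x{\bf u}=C_1^{-1}(B_0-z)\,{\bf u}$, which is exactly the paper's \eqref{def-R1}. The gap is in your second step. Unlike \eqref{first-charact}, the relation \eqref{second_charact} contains no $\mathrm{S}_x$, so it yields no analogue of \eqref{kn-and-Sxan1}: the only dual-basis information it gives is ${\bf a}_n^{[1]}={\bf a}_n$, i.e.\ the ${\bf D}_x$-relation above. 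Your plan of applying ${\bf D}_x$ to ${\bf a}_n^{[1]}={\bf a}_n$ and invoking \eqref{def Dx^nSx-u} and \eqref{a} cannot close: \eqref{def Dx^nSx-u} only commutes ${\bf D}_x$ past ${\bf S}_x$, so every identity produced this way still contains the unknown functionals ${\bf S}_x{\bf a}_k$ (equivalently ${\bf S}_x{\bf u}$), and the intermediate identity of the shape ``${\bf D}_x(\cdot)=(\text{scalar})\,{\bf S}_x{\bf a}_{n+1}$'' that you expect is precisely the one of Lemma \ref{lemma-exple1}, whose derivation used ${\bf S}_x{\bf a}_n^{[1]}=\alpha_n{\bf a}_n$ --- an input you do not have here. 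Symptomatically, $B_1$ and $C_2$, which occur in $\mathfrak a$ and $\mathfrak b$, never enter your scheme. (Your alternative route via ${\bf u}^{[1]}=\lambda{\bf u}$ is also circular: ${\bf u}^{[1]}$ is defined in \eqref{uk-func-Dx} through the very pair $(\phi,\psi)$ you are constructing; at best the converse theorem quoted after Theorem \ref{x-admissible} would give classicality qualitatively, but not the explicit $\phi$, $\psi$ that the lemma asserts and that the subsequent theorem needs.)

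The missing ingredient is a second relation ${\bf S}_x{\bf u}=R_2{\bf u}$ with $\deg R_2\le 2$. The paper obtains it by applying $\mathrm{D}_x$ to the TTRR \eqref{TTRR_relation} and using the product rule \eqref{1.3}; together with \eqref{second_charact} this gives the finite expansion $\mathrm{S}_xP_n=r_n^{[1]}P_n+r_n^{[2]}P_{n-1}+r_n^{[3]}P_{n-2}$ (equation \eqref{piSx-eq}), and dualizing against ${\bf a}_0$ yields ${\bf S}_x{\bf u}=R_2{\bf u}$ with $R_2$ built from $r_0^{[1]}$, $r_1^{[2]}$, $r_2^{[3]}$ --- this is where $B_1$ and $C_2$ enter. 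Only with both ${\bf D}_x{\bf u}=R_1{\bf u}$ and ${\bf S}_x{\bf u}=R_2{\bf u}$ in hand does the combination of \eqref{def Dx^nSx-u} (for $n=1$) and \eqref{a} (with $f=\texttt{U}_1$) collapse into ${\bf D}_x\big((R_2-\texttt{U}_1R_1){\bf u}\big)={\bf S}_x(\alpha R_1{\bf u})$, from which one reads off $\psi(z)=z-B_0$ and $\phi=-\alpha^{-1}C_1(R_2-\texttt{U}_1R_1)$ and then computes $\mathfrak a$ and $\mathfrak b$ as in \eqref{coef-a-b-conject-classical}. To repair your argument you should insert this derivation of \eqref{piSx-eq} and of ${\bf S}_x{\bf u}=R_2{\bf u}$; the rest of your outline (nonvanishing of $\phi$, admissibility via Theorem \ref{x-admissible}, classicality via Definition \ref{NUL-def}) then goes through.
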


\begin{proof}
By identifying the leading coefficients of each member of \eqref{second_charact}, we obtain $c_n=\gamma_n$. Let $({\bf a}_n)_{n\geq0}$ be the dual basis associated to the monic OPS $(P_n)_{n\geq0}$ satisfying \eqref{second_charact}. We claim that 
\begin{align}\label{def-R1}
{\bf D}_x {\bf u} = R_1 {\bf u},\quad R_1(z) := \frac{B_0-z}{C_1}\;.
\end{align}
Indeed fix $j \in \mathbb{N}_0$. Using \eqref{second_charact}, we deduce
$\left\langle {\bf D}_x  {\bf a}_0,P_j \right\rangle =-\left\langle {\bf a}_0, D_x P_j \right\rangle =-\gamma_j \delta_{0,j-1}\;.$ This implies
$$
{\bf D}_x  {\bf a}_0 =\sum_{j=0} ^{\infty} \left\langle {\bf D}_x  {\bf a}_0,P_j \right\rangle {\bf a}_j =-{\bf a}_1\;.
$$ 
Hence \eqref{def-R1} holds using \eqref{expression-an} and \eqref{TTRR_relation}. Applying $D_x$ to both sides of (\ref{TTRR_relation}), and using (\ref{1.3}), yields
\begin{align*}
S_x P_n(z) =-(\alpha z+\beta)D_x P_n(z) +
D_x P_{n+1}(z) +B_n D_x P_n(z) +C_n D_x P_{n-1}(z)\;.
\end{align*}
Using \eqref{second_charact} and (\ref{TTRR_relation}), we obtain
\begin{align}\label{piSx-eq}
S_x P_n (z)=r_n ^{[1]} P_{n}(z)+r_n ^{[2]} P_{n-1}(z)+r_n ^{[3]}P_{n-2} (z) 
\end{align}
for each $n=0,1,2,\ldots$, where
\begin{align*}
r_n ^{[1]}:= c_{n+1}-\alpha c_n\;,\quad r_n ^{[2]}:= c_n(B_n-\alpha B_{n-1}-\beta)\;,~
r_n ^{[3]}:= C_nc_{n-1}-\alpha C_{n-1}c_n\;.
\end{align*}
For a fixed $j\in \mathbb{N}_0$, using \eqref{piSx-eq} we obtain 
$$
\left\langle {\bf S}_x {\bf a}_0,P_j  \right\rangle = \left\langle {\bf a}_0, S_x P_j\right\rangle 
= r_j ^{[1]}\delta_{0,j} +r_j ^{[2]}\delta_{0,j-1} +r_j ^{[3]}\delta_{0,j-2}\;.
$$
Therefore, 
$$  {\bf S}_x  {\bf a}_0 =\sum_{j=0} ^{\infty} \left\langle {\bf S}_x  {\bf a}_0,P_j  \right\rangle {\bf a}_j 
= r_0 ^{[1]} {\bf a}_0 +r_1 ^{[2]} {\bf a}_1 +r_2 ^{[3]} {\bf a}_2\;, $$ 
and so
\begin{align}\label{def-R2}
{\bf S}_x {\bf u} = R_2 {\bf u},\quad R_2(z) := r_0 ^{[1]} +\frac{r_1 ^{[2]}}{C_1} P_1 (z)+\frac{r_2 ^{[3]}}{C_2 C_1}P_2 (z)\;.
\end{align}
Next, on the first hand, applying successively \eqref{def-R2}, \eqref{def Dx^nSx-u} and \eqref{def-R1}, we obtain
\begin{align}\label{first-hand}
{\bf D}_x (R_2 {\bf u}) &= {\bf D}_x {\bf S}_x  {\bf u}=\frac{2\alpha ^2 -1}{\alpha} {\bf S}_x (R_1 {\bf u}) +\frac{\texttt{U}_1}{\alpha} {\bf D}_x  (R_1 {\bf u})\;.
\end{align}
On the other hand, using \eqref{a} with $f=\texttt{U}_1$, we obtain 
\begin{align}\label{second-hand}
\frac{\texttt{U}_1}{\alpha} {\bf D}_x (R_1 {\bf u}) ={\bf D}_x (\texttt{U}_1 R_1 {\bf u}) -\frac{\alpha^2 -1}{\alpha}{\bf S}_x (R_1 {\bf u})\;.
\end{align}
Thus, combining \eqref{first-hand} and \eqref{second-hand}, we obtain 
$$
{\bf D}_x \Big((R_2 -\texttt{U}_1 R_1){\bf u}\Big)={\bf S}_x (\alpha R_1 {\bf u})\;.
$$
This leads us to define
$
\psi(x):=z-B_0$ and $\phi(z):=-\alpha ^{-1} C_1\big( R_2 (z)-\texttt{U}_1 (z)R_1 (z)\big)$. Clearly, $\deg \psi =1$, $\deg \phi \leq 2$ and  ${\bf D}_x(\phi {\bf u})={\bf S}_x (\psi {\bf u})$.
Finally, setting without lost of generality $c_0:=0$ and $C_0:=0$, we obtain
\begin{align*}
\mathfrak{a}:&=\frac{1}{2}\phi''(0)= -\frac{1}{\alpha } \left( \frac{r_2 ^{[5]}}{ C_2} +\alpha ^2 -1 \right)
=2\alpha \frac{C_1}{C_2}-\alpha\;.
\end{align*}
Similarly, we also have
\begin{align*}
&\phi'(0)= -\mathfrak{a}B_0  -\beta +B_0 -(\mathfrak{a}+\alpha)B_1 \;,  \\
&\phi(0)=-(\mathfrak{a}+\alpha)C_1 -B_0 \left( -\beta +B_0 -(\mathfrak{a}+\alpha)B_1  \right)\; .
\end{align*} 
Hence the desired result is proved. 
\end{proof}

\begin{lemma}\label{system-to-solve}
For the $q$-quadratic lattice $x(s)=\mathfrak{c}_1q^{-s}+\mathfrak{c}_2q^s+\mathfrak{c}_3$, 
let $(P_n)_{n\geq0}$ be a monic OPS satisfying \eqref{second_charact}. Then the coefficients $B_n$ and $C_{n}$ of the TTRR \eqref{TTRR_relation} satisfied by $(P_n)_{n\geq0}$ fulfill the following system of difference equations: 
\begin{align}
&\; c_{n+2}-2\alpha c_{n+1}+c_n=0 \;, \label{eq1S} \\
&\; t_{n+2} -2\alpha t_{n+1}+t_{n} =0 \;,\quad t_n :=c_n/C_n =k_1 q^{n/2}+k_2q^{-n/2}\;, \label{eq2S} \\
&t_{n+3}(B_{n+2}-\mathfrak{c}_3) -(t_{n+2} +t_{n+1})(B_{n+1}-\mathfrak{c}_3)  +t_n (B_n -\mathfrak{c}_3)=0\;,  \label{eq3S} \\
\nonumber \\
& \begin{array}{l} \label{eq4S}
(t_{n+1}+t_{n+2})(C_{n+1}-\mathfrak{c}_1 \mathfrak{c}_2) -2(1+\alpha)t_n(C_n -\mathfrak{c}_1 \mathfrak{c}_2) +(t_{n-1}+t_{n-2})(C_{n-1} -\mathfrak{c}_1 \mathfrak{c}_2) \\ [0.2em]
\quad = t_n \left[(B_{n}-\mathfrak{c}_3)^2 -2\alpha (B_{n}-\mathfrak{c}_3)(B_{n-1}-\mathfrak{c}_3) +(B_{n-1}-\mathfrak{c}_3)^2   \right] \;,
\end{array}  \\
\nonumber \\
& \begin{array}{l}  
c_{n+1}(B_{n+1}-\mathfrak{c}_3)+(1-2\alpha)(B_n-\mathfrak{c}_3)+c_n(B_{n-1}-\mathfrak{c}_3)=0 \;.\label{eq5S}
\end{array} 
\end{align}
\end{lemma}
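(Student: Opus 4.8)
The plan is to translate the two structure relations already at hand---\eqref{second_charact} itself and the companion relation \eqref{piSx-eq} obtained in the proof of Lemma~\ref{conject-is-dx-classical}---into a closed system by pushing them through the operational calculus of Proposition~\ref{propertyDxSx} and then matching coefficients in the orthogonal basis $(P_n)_{n\geq0}$. First I record the two action tables: matching leading coefficients in \eqref{second_charact} gives $c_n=\gamma_n$, so $\mathrm{D}_xP_n=\gamma_nP_{n-1}$, while \eqref{piSx-eq} reads $\mathrm{S}_xP_n=r_n^{[1]}P_n+r_n^{[2]}P_{n-1}+r_n^{[3]}P_{n-2}$ with the $r_n^{[j]}$ displayed there. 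Throughout one uses $\beta=(1-\alpha)\mathfrak{c}_3$, equivalently $\mathrm{S}_x(z-\mathfrak{c}_3)=\alpha(z-\mathfrak{c}_3)$, which is exactly what makes $B_k-\mathfrak{c}_3$ (rather than $B_k$) the natural variables. Equation \eqref{eq1S} is then immediate, since $c_n=\gamma_n$ and $\gamma_{n+2}-2\alpha\gamma_{n+1}+\gamma_n=0$ follows at once from the closed form \eqref{gamma-n} (equivalently from \eqref{1.2c}).

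To produce the remaining relations I would exploit the fact that $\mathrm{D}_x$ and $\mathrm{S}_x$ are tied to the recurrence coefficients in two independent ways. Applying $\mathrm{D}_x$ to \eqref{piSx-eq} and equating the outcome with the value of $\mathrm{D}_x\mathrm{S}_xP_n$ supplied by the case $n=1$ of $\mathrm{D}_x^n\mathrm{S}_xf=\alpha_n\mathrm{S}_x\mathrm{D}_x^nf+\gamma_n\texttt{U}_1\mathrm{D}_x^{n+1}f$---after rewriting $\texttt{U}_1P_{n-2}=(\alpha^2-1)(z-\mathfrak{c}_3)P_{n-2}$ via \eqref{TTRR_relation} and substituting \eqref{piSx-eq} for $\mathrm{S}_xP_{n-1}$---and reading off the coefficients of $P_{n-1},P_{n-2},P_{n-3}$ yields, beyond \eqref{eq1S}, one relation among the $B_k-\mathfrak{c}_3$ and one among the $C_k$. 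Independently, applying $\mathrm{S}_x$ to \eqref{TTRR_relation}, using \eqref{1.4} to write $\mathrm{S}_x(zP_n)=\gamma_n\texttt{U}_2P_{n-1}+(\alpha z+\beta)\mathrm{S}_xP_n$, substituting \eqref{piSx-eq}, and expanding both $\texttt{U}_2P_{n-1}$ and $(\alpha z+\beta)\mathrm{S}_xP_n$ in the basis $(P_k)$ by iterating \eqref{TTRR_relation}, then matching the coefficients of $P_{n+1},\dots,P_{n-3}$, produces the remaining identities (the top coefficient again collapsing to \eqref{eq1S}). Introducing $t_n:=c_n/C_n=\gamma_n/C_n$ and dividing the resulting relations by the appropriate products of $\gamma_k$ and $C_k$ (and, where convenient, taking suitable linear combinations of the matched equations) recasts them as \eqref{eq2S}--\eqref{eq5S}; in particular the relation carried by the $C_k$'s is equivalent to $t_{n+2}-2\alpha t_{n+1}+t_n=0$, whose characteristic roots are $q^{\pm1/2}$, so that $t_n=k_1q^{n/2}+k_2q^{-n/2}$. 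Several of these identities---notably \eqref{eq2S} and the quadratic one \eqref{eq4S}---can alternatively be produced by pairing the functional relations $\mathbf{D}_x{\bf u}=R_1{\bf u}$ and $\mathbf{S}_x{\bf u}=R_2{\bf u}$ from \eqref{def-R1}, \eqref{def-R2} with $P_n^2$ and $P_nP_{n-1}$ and using \eqref{1.3}, \eqref{1.4}.

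The main obstacle is purely computational. The term $\texttt{U}_2P_{n-1}$ has degree $n+1$, so expanding it in the basis $(P_k)$ forces a double application of the three-term recurrence and careful tracking of the quadratic combinations $(B_k-\mathfrak{c}_3)^2$ and $(B_k-\mathfrak{c}_3)(B_{k-1}-\mathfrak{c}_3)$ that make up the right-hand side of \eqref{eq4S}; moreover several contributions---from $\mathrm{S}_xP_{n+1}$, from $(\alpha z+\beta)\mathrm{S}_xP_n$, and from $\gamma_n\texttt{U}_2P_{n-1}$---must be collected and the identities \eqref{1.2}--\eqref{1.2c} for $\alpha_n,\beta_n,\gamma_n$ invoked for the cancellations to surface. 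Once the correct groupings are made, the nonlinear system \eqref{eq1S}--\eqref{eq5S} emerges with no further input.
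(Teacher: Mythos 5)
Your proposal is correct and follows essentially the same route as the paper: apply $\mathrm{S}_x$ to the TTRR, use \eqref{1.4} to get $\texttt{U}_2\mathrm{D}_xP_n+(\alpha z+\beta)\mathrm{S}_xP_n$ on the left, substitute $\mathrm{D}_xP_n=c_nP_{n-1}$ and \eqref{piSx-eq}, expand everything in the basis $(P_k)$ via the recurrence, equate coefficients, and introduce $t_n=c_n/C_n$ to recast the result as \eqref{eq1S}--\eqref{eq5S} (your count of basis elements $P_{n+1},\dots,P_{n-3}$ is in fact the accurate one). The additional derivations you sketch (applying $\mathrm{D}_x$ to \eqref{piSx-eq}, or pairing $\mathbf{D}_x{\bf u}=R_1{\bf u}$, $\mathbf{S}_x{\bf u}=R_2{\bf u}$ against $P_n^2$, $P_nP_{n-1}$) are redundant but harmless supplements to the same argument.
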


\begin{proof}
Applying the operator $S_x$ to both sides of (\ref{TTRR_relation}) and using (\ref{1.4}), we deduce
\begin{align*}
\texttt{U}_2(z) D_x P_n(z) + (\alpha z+\beta)S_x P_n(z) =S_x P_{n+1}(z) +B_n S_x P_n(z) +C_n S_x P_{n-1}(z)\,.
\end{align*}
Using successively (\ref{U2-simples-bis}), (\ref{second_charact}), (\ref{piSx-eq}), and (\ref{TTRR_relation}), we obtain
a vanishing linear combination of the polynomials $P_{n+3}$, $P_{n+2}$,..., $P_{n-3}$.
Thus, setting
\begin{equation}\label{def-tn}
t_n:=c_n/C_n,\quad n=1,2,3,\ldots\;, 
\end{equation}
after straightforward computations we obtain \eqref{eq1S}--\eqref{eq2S} together with the following equations:
\begin{align}
& \begin{array}{l}
-t_{n+2}B_{n+1} +(t_{n+1}+t_{n})B_n-t_{n-1}B_{n-1}=2\beta(t_n+t_{n+1}) \;,
\end{array} \label{seq4S} \\
& \begin{array}{l}
t_nB_{n}^2 + t_n B_{n-1}^2 -(t_{n-1}+t_{n+1})B_n B_{n-1}-2\beta t_n(B_n+B_{n-1})  \\ [0.2em]
\quad -(t_{n+2}+t_{n+1})C_{n+1}+ 2(1+\alpha)t_nC_n -(t_{n-2}+t_{n-1})C_{n-1}=(\delta-\beta^2)t_n \;,\\ [0.2em]
\end{array} \label{seq6S} \\
& \begin{array}{l}
c_{n+1}B_{n+1}+(1-2\alpha)(c_n+c_{n+1})B_n +c_nB_{n-1}=2\beta(c_n+c_{n+1}) \,,
\end{array} \label{seq7S}  
\end{align}
where $\delta=(\alpha^2-1)(\mathfrak{c}_3 ^2-4\mathfrak{c}_2\mathfrak{c}_2)$.
\eqref{eq3S} (respectively \eqref{eq4S} and \eqref{eq5S}) is obtained from \eqref{seq4S} (respectively \eqref{seq6S} and \eqref{seq7S}). This completes the proof.  
\end{proof}

\begin{remark}
According to Lemma \ref{system-to-solve}, the coefficients $B_n$ and $C_n$ of the TTRR \eqref{TTRR_relation} of any monic OPS $(P_n)_{n\geq 0}$ fulfilling \eqref{second_charact} must fulfill \eqref{eq1S}--\eqref{eq5S}. However, we need to take into account some initial conditions which will be specified. Indeed, for instance, it is clear that
$$B_n=\mathfrak{c}_3,\quad \quad C_{n+1}=\mathfrak{c}_1\mathfrak{c}_2\quad (n=0,1,2,\ldots)\;,$$ provide a solution of the system \eqref{eq1S}--\eqref{eq5S}. The corresponding monic OPS is
$$P_n (x)=2^n (\mathfrak{c}_1\mathfrak{c}_2)^{n/2}\widehat{ U}_n \left(\frac{z-\mathfrak{c}_3}{2\sqrt{\mathfrak{c}_1\mathfrak{c}_2}}\right)~~~~(n=0,1,2,\ldots)\;, $$ where $(\widehat{U}_n)_{n \geq 0}$ is the monic Chebyschev polynomials of the second kind. However this sequence $(P_n)_{n \geq 0}$ does not provide a solution of \eqref{second_charact} (see \eqref{C2-true-pi-0} below). 
\end{remark}

 The system of equations \eqref{eq1S}--\eqref{eq5S} is non-linear and so, in general it is not easy to solve it. Nevertheless, in view of Lemma \ref{conject-is-dx-classical}, an OPS satisfying \eqref{second_charact} is $x$-classical and so the results presented in previous paragraph will be useful to find the explicit expressions for the coefficients of the TTRR satisfied by the OPS under analysis (see Theorem \ref{main-Thm1} and Corollary \ref{assymptotic-behaviour-coef-ttrr-Dx}). We will see that some patterns appear associated with the system of equations \eqref{eq1S}--\eqref{eq5S} which will allow us to solve the system. 

Recall that from \eqref{eq2S}, we have 
\begin{align}\label{general-tn-struc}
t_n = \frac{c_n}{C_n} =k_1 q^{n/2}+k_2q^{-n/2}\quad (n=1,2,3,\ldots)\;,
\end{align} 
where $k_1$ and $k_2$ are two complex numbers. Since $c_n \neq 0$, for $n=1,2,3,\ldots$, then $k_1$ and $k_2$ cannot vanish simultaneously. Recall also that we defined $c_0=C_0=0$, and so we define $$t_0:=k_1+k_2\;,$$ by compatibility with \eqref{general-tn-struc}.

\begin{theorem}
Up to an affine transformation of the variable, the only monic OPS $(P_n)_{n\geq 0}$ satisfying \eqref{second_charact} are the  monic $q$-Hermite polynomials of Rogers.
\end{theorem}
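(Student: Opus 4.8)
The plan is to exploit Lemma~\ref{conject-is-dx-classical}, which already tells us that any monic OPS $(P_n)_{n\geq0}$ satisfying \eqref{second_charact} is $x$-classical with explicit $\phi$ and $\psi$ depending only on the parameters $B_0$, $B_1$, $C_1$, $C_2$. In particular $\psi(z)=z-B_0$, so $\psi'=1$ and $d=1$ in the notation of Theorem~\ref{main-Thm1}; also $\phi(z)=(\mathfrak{a}z-\mathfrak{b})(z-B_0)-(\mathfrak{a}+\alpha)C_1$, so the leading coefficient of $\phi$ is $a=\mathfrak{a}=\alpha(2C_1-C_2)/C_2$. The first step is therefore to feed these values into the closed formulas \eqref{Bn-Dx}--\eqref{Cn-Dx} of Theorem~\ref{main-Thm1} (together with the admissibility and regularity conditions \eqref{le1a}) to obtain candidate expressions for $B_n$ and $C_{n+1}$ purely in terms of $q$, $\mathfrak{c}_3$, $\mathfrak{c}_1\mathfrak{c}_2$, $\mathfrak{a}$, $\mathfrak{b}$ (equivalently $B_0,B_1,C_1,C_2$).

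The second step is to use the extra rigidity coming from the structure relation itself, namely the system \eqref{eq1S}--\eqref{eq5S} of Lemma~\ref{system-to-solve}, which must hold simultaneously. In particular \eqref{eq5S} is a linear recurrence for $(B_n-\mathfrak{c}_3)$ with coefficients built from $c_n=\gamma_n$, and combined with the explicit form of $B_n$ from \eqref{Bn-Dx} one should be forced to conclude $B_n=\mathfrak{c}_3$ for all $n$; this pins down $\mathfrak{b}$, and by \eqref{coef-a-b-conject-classical} gives a relation between $B_0$, $B_1$, $C_1$, $C_2$. (The asymptotic information in Corollary~\ref{assymptotic-behaviour-coef-ttrr-Dx}, especially \eqref{q-is-btwn01a}--\eqref{q-is-btwn01c}, can be invoked to rule out spurious branches: the Al-Salam--Chihara/$q$-Hermite family is distinguished among $q$-quadratic classical families by $B_n\equiv\mathfrak{c}_3$.) Once $B_n=\mathfrak{c}_3$, the remaining free parameter is $C_1$ (or equivalently $\mathfrak{a}$), and \eqref{eq4S} together with \eqref{Cn-Dx} should force the value of $C_1$ that makes $\phi$ proportional to $\texttt{U}_2$; the natural outcome is $C_{n+1}=\mathfrak{c}_1\mathfrak{c}_2(1-q^{n+1})$ up to the affine normalization, which is exactly the recurrence coefficient sequence of the monic continuous $q$-Hermite polynomials of Rogers (the $a_1=a_2=a_3=a_4=0$ specialization of Askey--Wilson, cf. the $B_n$ and $C_{n+1}$ formulas recalled in Section~3). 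Conversely one checks directly that the monic $q$-Hermite polynomials satisfy \eqref{second_charact} with $c_n=\gamma_n$, using \eqref{first-charact}-type computations or the known Pearson pair $\phi(z)=(\alpha-\alpha^{-1})\big((z-\mathfrak{c}_3)^2-4\mathfrak{c}_1\mathfrak{c}_2\big)$, $\psi(z)=z-\mathfrak{c}_3$.

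I expect the main obstacle to be the elimination step: showing that the \emph{only} way the closed-form $B_n$, $C_{n+1}$ from Theorem~\ref{main-Thm1} can simultaneously satisfy all of \eqref{eq1S}--\eqref{eq5S} is the $q$-Hermite solution. The system is nonlinear in $B_0,B_1,C_1,C_2$, and a naive substitution produces unwieldy rational functions of $q^n$; the trick will be to compare a few low-order coefficients (the values at $n=0,1,2$, where $C_0=c_0=0$ are forced) and to match the $n\to\infty$ growth rates via Corollary~\ref{assymptotic-behaviour-coef-ttrr-Dx}, reducing the problem to a couple of polynomial identities in $q$ and the parameters that can be solved by hand. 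A secondary point requiring care is bookkeeping of the regularity restrictions so that the final parameter exclusions ($q$ not a root of unity, and the $q$-Hermite weight being regular) are stated correctly, and handling the degenerate case $\mathfrak{a}=0$ (i.e. $C_2=2C_1$, corresponding to $\deg\phi\le1$) separately, since then $\phi$ is linear and one must check it still leads to $q$-Hermite rather than to a $q$-linear-lattice family excluded by the hypothesis $\mathfrak{c}_1\mathfrak{c}_2\neq0$.
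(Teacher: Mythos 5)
Your outline follows the same broad route as the paper's proof (Lemma \ref{conject-is-dx-classical} to get the Pearson pair, then Theorem \ref{main-Thm1} together with the system of Lemma \ref{system-to-solve}), but the decisive elimination step is only asserted, and the tools you name for it would not carry it out. After one forces $B_n\equiv\mathfrak{c}_3$ (the paper gets this from the subleading coefficient of \eqref{second_charact} plus \eqref{eq3S}; your route via \eqref{eq5S} and \eqref{Bn-Dx} is plausible but unverified), the pair reduces to $\psi(z)=z-\mathfrak{c}_3$, $\phi(z)=\mathfrak{a}(z-\mathfrak{c}_3)^2-(\mathfrak{a}+\alpha)C_1$, and the whole difficulty is to show that only two values of the remaining free parameter survive. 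The paper does this by two concrete moves that your plan neither performs nor replaces: (i) it extracts the extra relation \eqref{C2-true-pi-0}, $C_2=2(2\alpha^2-1)(C_1-\mathfrak{c}_1\mathfrak{c}_2)+2\mathfrak{c}_1\mathfrak{c}_2$, directly from \eqref{second_charact} at $n=3$ — genuinely more information than the system \eqref{eq1S}--\eqref{eq5S}, which the Chebyshev-type solution $B_n=\mathfrak{c}_3$, $C_n=\mathfrak{c}_1\mathfrak{c}_2$ also satisfies; and (ii) it compares the closed form of $C_{n+1}$ obtained from \eqref{eq4S} with $C_{n+1}=\gamma_{n+1}/t_{n+1}$, $t_n=k_1q^{n/2}+k_2q^{-n/2}$, and by matching asymptotics as $n\to\infty$ proves the dichotomy $k_1k_2=0$, i.e.\ $C_2=(1+q)C_1$ or $C_2=(1+q^{-1})C_1$; combined with \eqref{C2-true-pi-0} this pins down $C_1$ and yields $C_{n+1}=(1-q^{\pm(n+1)})\mathfrak{c}_1\mathfrak{c}_2$. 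Your proposal says \eqref{eq4S} and \eqref{Cn-Dx} ``should force'' the right $C_1$, but Corollary \ref{assymptotic-behaviour-coef-ttrr-Dx}, which you invoke for the asymptotic matching, only concerns $B_n$ (useless here, since $B_n\equiv\mathfrak{c}_3$), and $B_n\equiv\mathfrak{c}_3$ by itself does not single out $q$-Hermite: the symmetric Askey--Wilson subfamily of Theorem \ref{ThmMain1} also has $B_n=\mathfrak{c}_3$. Some version of the $k_1k_2=0$ argument is indispensable and is exactly the nontrivial part left open in your plan.

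Two further assertions in your sketch are incorrect as written. The limiting $\phi$ is not a scalar multiple of $\texttt{U}_2$: the paper ends with $\phi(z)=\pm\frac{1}{2u}\big((z-\mathfrak{c}_3)^2-\mathfrak{c}_1\mathfrak{c}_2\big)$, $\psi(z)=z-\mathfrak{c}_3$, whereas $\texttt{U}_2(z)=(\alpha^2-1)\big((z-\mathfrak{c}_3)^2-4\mathfrak{c}_1\mathfrak{c}_2\big)$; correspondingly, the pair you propose for the converse check, $\phi=(\alpha-\alpha^{-1})\big((z-\mathfrak{c}_3)^2-4\mathfrak{c}_1\mathfrak{c}_2\big)$ with the same $\psi$, is not the Pearson pair of the $q$-Hermite functional (for a regular functional the polynomial $\phi$ accompanying a given $\psi$ in \eqref{NUL-Pearson} is unique). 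Finally, the answer has two branches (parameters $q$ and $q^{-1}$, both giving Rogers' $q$-Hermite, realized in the paper as Al-Salam polynomials $A_n(\cdot;0,0|q^{\pm1})$); your single formula $C_{n+1}=\mathfrak{c}_1\mathfrak{c}_2(1-q^{n+1})$ covers only one of them.
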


\begin{proof}
Let $(P_n)_{n\geq 0}$ be a monic OPS satisfying \eqref{second_charact}. We know that $c_n=\gamma_n$ and therefore by identifying the coefficient of the second term with the higher degree in \eqref{second_charact}, we also obtain $$B_n=\mathfrak{c}_3+(\gamma_{n+1}-\gamma_n)(B_0-\mathfrak{c}_3)\;. $$ 
Since by assumption $t_n\neq 0$ for each $n=1,2,\ldots$, it is not hard to see that the above expression of $B_n$ satisfies \eqref{eq3S} if and only if $B_0=\mathfrak{c}_3$.
Hence
\begin{align}\label{Bn-power-pi-0}
B_n =\mathfrak{c}_3 \quad (n=0,1,2,\ldots)\;.
\end{align} 
In addition, from \eqref{Bn-Dx} we obtain 
$$
\frac{\gamma_{n+1}e_n}{d_{2n}}=\frac{\gamma_n e_{n-1}}{d_{2n-2}}\quad (n=0,1,2,\ldots)\;.
$$
Since $e_n =\phi'(\mathfrak{c}_3)\gamma_n +\psi(\mathfrak{c}_3)\alpha_n$, we find $e_0=0$ (because $\gamma_0=0$, and from \eqref{value-d-e-a}, $\psi(\mathfrak{c}_3)=\mathfrak{c}_3-B_0=0$) and so $e_n=0$, for $n=0,1,2,\ldots$, and consequently $\phi'(\mathfrak{c}_3)=0$. Then, from \eqref{value-d-e-a} we obtain $\mathfrak{b}=\mathfrak{a}\mathfrak{c}_3$.
Taking $n=3$ in \eqref{second_charact} and using \eqref{Dx-xn}--\eqref{vnD}, we obtain
\begin{align}\label{C2-true-pi-0}
C_2=2(2\alpha^2-1)(C_1-\mathfrak{c}_1\mathfrak{c}_2)+2\mathfrak{c}_1\mathfrak{c}_2\;.
\end{align}
Therefore, using \eqref{value-d-e-a}, we have $\phi(z)=\mathfrak{a}(z-\mathfrak{c}_3)^2 -(\mathfrak{a}+\alpha)C_1$ and $\psi(z)=z-\mathfrak{c}_3$, where
\begin{align}\label{value-a-pi-0}
\mathfrak{a}=\frac{2\alpha C_1}{C_2}-\alpha &=\frac{2\alpha(1-\alpha^2)(C_1-\mathfrak{c}_1\mathfrak{c}_2)}{(2\alpha^2 -1)(C_1-\mathfrak{c}_1\mathfrak{c}_2)+\mathfrak{c}_1\mathfrak{c}_2}\;.
\end{align}

Recall that
\begin{align}
t_n =\frac{\gamma_n}{C_n}=k_1q^{n/2}+k_2q^{-n/2},\quad k_1 = \frac{(1+q)C_1-C_2}{q^{1/2}(q-1)C_1C_2},\quad k_2= \frac{(q^{-1}+1)C_1-C_2}{q^{-1/2}(q^{-1}-1)C_1C_2} \;.\label{tn-pi-0}
\end{align}
Equation \eqref{eq4S} reduces to 
\begin{align}\label{Cn-pi-0}
(t_{n+1}+t_{n+2})(C_{n+1}-\mathfrak{c}_1 \mathfrak{c}_2) -2(1+\alpha)t_n(C_n -\mathfrak{c}_1 \mathfrak{c}_2) +(t_{n-1}+t_{n-2})(C_{n-1} -\mathfrak{c}_1 \mathfrak{c}_2) =0\;.
\end{align} 
Next, define $r_n:=t_n +t_{n+1} =aq^{n/2} +bq^{-n/2}$, where $a:=k_1(1+q^{1/2})$ and $b:=k_2(1+q^{-1/2})$. By setting $K_c:=\Big(r_2 (C_2-\mathfrak{c}_1\mathfrak{c}_2)-r_0 (C_1-\mathfrak{c}_1\mathfrak{c}_2)\Big)/(1-q^{-1/2})$, we see that \eqref{Cn-pi-0} reads as
\begin{align*}
r_{n+1}(C_{n+1}-\mathfrak{c}_1\mathfrak{c}_2 ) - r_{n-1}(C_n-\mathfrak{c}_1\mathfrak{c}_2)=r_{n}(C_n-\mathfrak{c}_1\mathfrak{c}_2) -r_{n-2}(C_{n-1}-\mathfrak{c}_1\mathfrak{c}_2)\;.
\end{align*}
By applying this relation successively, we obtain
\begin{align*}
r_{n+1}(C_{n+1}-\mathfrak{c}_1\mathfrak{c}_2 ) - r_{n-1}(C_n-\mathfrak{c}_1\mathfrak{c}_2)=K_c(1-q^{-1/2})\;.
\end{align*}
Multiplying this equation by $r_n$ and applying  a telescoping process to the resulting equation, we obtain
\begin{align}\label{Cn1-pi-0}
C_{n+1}= \mathfrak{c}_1\mathfrak{c}_2 +  \frac{r_0r_1(C_1-\mathfrak{c}_1\mathfrak{c}_2)q^{n/2} +K_c\Big(aq^n +(b-aq^{-1/2})q^{n/2}-bq^{-1/2}\Big)}{(aq^{n} +b)(aq^{n+1} +b)}q^{(n+1)/2} 
\end{align}
for each $n=0,1,2,\ldots$.\\
We claim that 
\begin{align}\label{k1k2-is-0-pi-0}
\Big(C_2 -(1+q)C_1\Big)\Big(C_2-(1+q^{-1})C_1\Big)=0\;.
\end{align}
Indeed, suppose that \eqref{k1k2-is-0-pi-0} does not hold. Then, by \eqref{tn-pi-0}, we would have $k_1k_2\neq 0$, and we may write 
\begin{align}\label{Cn1a-pi-0}
C_{n+1}=\frac{\gamma_{n+1}}{t_{n+1}}=\frac{u(q^{n+1}-1)}{k_1q^{n+1}+k_2},~~~u^{-1}=q^{1/2}-q^{-1/2}~~(n=0,1,2,\ldots)\;.
\end{align}
Assume without lost of generality that $0<q<1$. Then taking successively limits as $n\to\infty$ in the expressions  for $C_{n+1}$ and $q^{-(n+1)/2}(C_{n+1}-\mathfrak{c}_1\mathfrak{c}_2)$ given by \eqref{Cn1-pi-0} and \eqref{Cn1a-pi-0}, we obtain $u+k_2\mathfrak{c}_1\mathfrak{c}_2=0$ and $K_c=0$. Now, the equality between \eqref{Cn1-pi-0} and \eqref{Cn1a-pi-0} implies 
\begin{align*}
2(1+\alpha)(u-k_1\mathfrak{c}_1\mathfrak{c}_2)k_1 ^2 q^{2n+2}+k_1\Big(4(1+\alpha)^2(u-k_1\mathfrak{c}_1\mathfrak{c}_2)k_2-r_0r_1(C_1-\mathfrak{c}_1\mathfrak{c}_2) \Big) q^{n+1}  \\
+k_2\Big(2(1+\alpha)(u-k_1\mathfrak{c}_1\mathfrak{c}_2)k_2-r_0r_1(C_1-\mathfrak{c}_1\mathfrak{c}_2) \Big) =0\;, 
\end{align*} 
for each $n=0,1,2,\ldots$. This implies $u-k_1\mathfrak{c}_1\mathfrak{c}_2=0$ and $r_0r_1(C_1-\mathfrak{c}_1\mathfrak{c}_2)=0$. Consequently, $C_{n+1}=\mathfrak{c}_1\mathfrak{c}_2$ for each $n=0,1,2,\ldots$, which contradicts \eqref{C2-true-pi-0}. Hence \eqref{k1k2-is-0-pi-0} holds and so $k_1k_2=0$.   

 Suppose that $k_1=0$, i.e., $C_2=(1+q)C_1$. Then, from \eqref{C2-true-pi-0}, we find $C_1=(1-q)\mathfrak{c}_1\mathfrak{c}_2$ and so, from \eqref{value-a-pi-0}, we obtain $\mathfrak{a}=-1/(2u)$. Since $\mathfrak{b}=\mathfrak{a}\mathfrak{c}_3$ and $B_0=\mathfrak{c}_3$, using \eqref{Cn-Dx} in Theorem \ref{main-Thm1} we obtain 
\begin{align}\label{cn-for-q-pi-0}
C_{n+1}=(1-q^{n+1})\mathfrak{c}_1\mathfrak{c}_2\quad (n=0,1,2,\ldots)\;.
\end{align}
One easily see that the expressions for $B_n$ and $C_{n+1}$ given by \eqref{Bn-power-pi-0} and \eqref{cn-for-q-pi-0} satisfy \eqref{eq5S} and so the system of equations \eqref{eq1S}--\eqref{eq5S} is satisfied.

 Similarly, if $k_2=0$, i.e. $C_2=(1+q^{-1})\mathfrak{c}_1\mathfrak{c}_2$, we obtain $\mathfrak{a}=1/(2u)$ and
\begin{align}\label{cn-for-q-1-pi-0}
C_{n+1}=(1-q^{-n-1})\mathfrak{c}_1\mathfrak{c}_2 \quad (n=0,1,2,\ldots)\;, 
\end{align}
which together with \eqref{Bn-power-pi-0} also fulfill the system of equations \eqref{eq1S}--\eqref{eq5S}.
Thus 
$$
\phi(z)=\pm \frac{1}{2u}\left( (z-\mathfrak{c}_3)^2 -\mathfrak{c}_1\mathfrak{c}_2 \right) \quad \textit{and}\quad \psi(z)=z-\mathfrak{c}_3\;.
$$ 
Hence, we conclude that
\begin{align*}
P_n (z)&=2^n (\mathfrak{c}_1\mathfrak{c}_2)^{n/2}A_n \left(\frac{z-\mathfrak{c}_3}{2\sqrt{\mathfrak{c}_1\mathfrak{c}_2}};0,0\Big|q\right) 
\end{align*}
or
\begin{align*}
P_n(z)= 2^n (\mathfrak{c}_1\mathfrak{c}_2)^{n/2}A_n\left( \frac{z-\mathfrak{c}_3}{2\sqrt{\mathfrak{c}_1\mathfrak{c}_2}};0,0\Big|q^{-1}\right),
\end{align*}
where $(A_n(;a,b|q))_{n\geq 0}$ is the monic Al-Salam polynomial. As a matter of fact, this $(P_n)_{n\geq 0}$ is the sequence of monic $q$-Hermite polynomials of Rogers. 
Thus the proof is complete.
\end{proof}

Following the ideas of this proof, one can easily prove the following result.
\begin{corollary}
Consider the quadratic lattice $x(s)=4\beta s^2 +\mathfrak{c}_5s+\mathfrak{c}_6$ with $(\beta,\mathfrak{c}_5)\neq (0,0)$. Then \eqref{second_charact} has no OPS solutions.
\end{corollary}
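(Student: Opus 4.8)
My plan is to argue by contradiction, keeping the skeleton of the preceding proof but replacing the combinatorial analysis of \eqref{eq1S}--\eqref{eq5S} by a short argument on the Pearson pair. So I would assume that some monic OPS $(P_n)_{n\ge0}$, with respect to ${\bf u}$, satisfies \eqref{second_charact} on $x(s)=4\beta s^2+\mathfrak{c}_5 s+\mathfrak{c}_6$ with $(\beta,\mathfrak{c}_5)\neq(0,0)$. Comparing leading coefficients in \eqref{second_charact} gives $c_n=\gamma_n=n$ for $n\ge1$, hence by \eqref{Pnkx} $P_n^{[1]}=\gamma_{n+1}^{-1}\mathrm{D}_xP_{n+1}=P_n$ for every $n$. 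Thus $(P_n^{[1]})_{n\ge0}=(P_n)_{n\ge0}$ is an OPS, so the converse to Theorem~\ref{x-admissible} recalled right after it applies and $(P_n)_{n\ge0}$ is $x$-classical: there are nonzero $\phi=az^2+bz+c\in\mathcal{P}_2$ and $\psi=dz+e\in\mathcal{P}_1$ with $\mathbf{D}_x(\phi{\bf u})=\mathbf{S}_x(\psi{\bf u})$. Since ${\bf u}$ is regular, Theorem~\ref{x-admissible} forces $(\phi,\psi)$ to be $x$-admissible; as $\gamma_0=0$, the value $d_0=\tfrac12\gamma_0\phi''+\alpha_0\psi'=\psi'$ cannot vanish, so $d\neq0$.

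Next I would identify ${\bf u}^{[1]}$. By Theorem~\ref{x-admissible}, ${\bf u}^{[1]}$ is regular with monic OPS $(P_n^{[1]})_{n\ge0}=(P_n)_{n\ge0}$, and a regular functional is determined by its monic OPS up to a nonzero factor, so ${\bf u}^{[1]}=\lambda{\bf u}$ for some $\lambda\in\mathbb{C}\setminus\{0\}$. By construction (see \cite{FK-NM2011}), ${\bf u}^{[1]}$ satisfies the $x$-GP equation with the pair $(\phi^{[1]},\psi^{[1]})$ of \eqref{def-phi0psi0}--\eqref{def-psik}; combined with ${\bf u}^{[1]}=\lambda{\bf u}$ this yields $\mathbf{D}_x(\phi^{[1]}{\bf u})=\mathbf{S}_x(\psi^{[1]}{\bf u})$, whence $\mathbf{D}_x\big((\phi^{[1]}-\mu\phi){\bf u}\big)=\mathbf{S}_x\big((\psi^{[1]}-\mu\psi){\bf u}\big)$ for every $\mu$. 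Choosing $\mu$ so that $\psi^{[1]}-\mu\psi$ has degree $\le0$ (possible because $\psi^{[1]}$ is, for the same admissibility reason, of degree exactly $1$, and $d\neq0$) and applying Theorem~\ref{x-admissible} once more: were $(\phi^{[1]}-\mu\phi,\psi^{[1]}-\mu\psi)\neq(0,0)$, it would be $x$-admissible, but its $d_0=\tfrac12\gamma_0(\phi^{[1]}-\mu\phi)''+\alpha_0\cdot0=0$, a contradiction. Hence $\psi^{[1]}=\mu\psi$ and $\phi^{[1]}=\mu\phi$; that is, the Pearson pair of a regular $x$-classical functional is unique up to a scalar.

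Then I would run the leading-coefficient bookkeeping in \eqref{def-phik}--\eqref{def-psik} specialized to $q=1$, where $\alpha=1$, $\texttt{U}_1=2\beta$ and $\texttt{U}_2(z)=4\beta(z-\mathfrak{c}_6)+\tfrac14\mathfrak{c}_5^2$. Using $\mathrm{D}_xz=1$, $\mathrm{D}_xz^2=2z+2\beta$, $\mathrm{S}_xz=z+\beta$, $\mathrm{S}_xz^2=(z+\beta)^2+\texttt{U}_2$, one gets $\psi^{[1]}=\mathrm{D}_x\phi+\mathrm{S}_x\psi+2\beta\,\mathrm{D}_x\psi=(2a+d)z+\cdots$ and $\phi^{[1]}=\mathrm{S}_x\phi+2\beta\,\mathrm{S}_x\psi+\texttt{U}_2\,\mathrm{D}_x\psi=az^2+\cdots$. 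Matching with $\psi^{[1]}=\mu\psi$ and $\phi^{[1]}=\mu\phi$ forces $a=\mu a$ and $2a+d=\mu d$, hence $a=0$ and $\mu=1$, i.e. $\phi^{[1]}=\phi$. With $a=0$, so $\phi=bz+c$, a direct evaluation gives
\[
\phi^{[1]}-\phi=\big(\mathrm{S}_x\phi-\phi\big)+2\beta\,\mathrm{S}_x\psi+\texttt{U}_2\,\mathrm{D}_x\psi=6\beta d\,z+\Big(b\beta+2\beta^2d+2\beta e-4\beta d\,\mathfrak{c}_6+\tfrac14 d\,\mathfrak{c}_5^2\Big).
\]
Vanishing of the coefficient of $z$ (and $d\neq0$) gives $\beta=0$, and then vanishing of the constant term gives $\mathfrak{c}_5=0$; thus $(\beta,\mathfrak{c}_5)=(0,0)$, contradicting the hypothesis. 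This contradiction would prove that \eqref{second_charact} admits no OPS solution on such a lattice.

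The step I expect to be the crux is the proportionality $(\phi^{[1]},\psi^{[1]})=\mu(\phi,\psi)$ — the uniqueness up to a scalar of the Pearson pair of a regular $x$-classical functional — which above is extracted from the admissibility obstruction at $n=0$ (equivalently, from the absence of nonzero polynomials annihilating a regular functional). An alternative route, closer to the previous proof, is to obtain that ${\bf u}$ is $x$-classical through the dual-basis computation of Lemma~\ref{conject-is-dx-classical}, derive the $q=1$ counterpart of \eqref{eq2S} (so that $t_n:=\gamma_n/C_n$ is affine in $n$, whence $C_n=O(n)$), and then compare with \eqref{Bn-quadratic}--\eqref{Cn-quadratic}: for $\beta\neq0$ this already clashes with the $n^4$-growth of $C_n$ recorded in the corollary to Theorem~\ref{mainthmquadratics}, while for the linear lattice $\beta=0$, $\mathfrak{c}_5\neq0$ the same comparison shows $C_n$ grows like $\mathfrak{c}_5^2n^2$ (using in addition that matching the subleading coefficient of \eqref{second_charact} forces $B_n\equiv B_0$), again a contradiction.
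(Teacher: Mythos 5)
Your argument is correct, and it follows a genuinely different route from the one the paper intends (the paper's proof is ``follow the ideas of the preceding proof'', i.e.\ mirror the $q$-quadratic case: the dual-basis computation of Lemma~\ref{conject-is-dx-classical}, the analogue of the system \eqref{eq1S}--\eqref{eq5S}, and the explicit recurrence coefficients \eqref{Bn-quadratic}--\eqref{Cn-quadratic} of Theorem~\ref{mainthmquadratics} --- essentially your ``alternative route'' at the end, whose $\beta=0$ branch you leave sketchier). Instead, you exploit that $c_n=\gamma_n=n$ gives $P_n^{[1]}=P_n$, invoke the converse theorem to get $x$-classicity, and then prove and use uniqueness up to a scalar of the Pearson pair of a regular functional via the admissibility obstruction $d_0=\psi'\neq0$ from Theorem~\ref{x-admissible}; the whole corollary then reduces to the single identity $\phi^{[1]}=\phi$, which at $q=1$ reads $6\beta d\,z+\big(b\beta+2\beta^2d+2\beta e-4\beta d\mathfrak{c}_6+\tfrac14 d\mathfrak{c}_5^2\big)=0$ and, since $d\neq0$, forces $(\beta,\mathfrak{c}_5)=(0,0)$ --- I checked the $q=1$ computations ($\mathrm{S}_xz=z+\beta$, $\mathrm{S}_xz^2=(z+\beta)^2+\texttt{U}_2$, etc.), and your $\phi^{[1]}$ agrees with the explicit $\phi^{[n]}$ of Theorem~\ref{mainthmquadratics} at $n=1$, so the contradiction is sound in both the quadratic and the linear ($\beta=0$, $\mathfrak{c}_5\neq0$) cases. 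What your route buys is the complete avoidance of the nonlinear system and of the recurrence-coefficient formulas, plus a clean by-product (uniqueness of the Pearson pair); what it costs is one external ingredient: the fact that ${\bf u}^{[1]}$ defined by \eqref{uk-func-Dx} satisfies the $x$-GP equation with the pair $(\phi^{[1]},\psi^{[1]})$ of \eqref{def-phik}--\eqref{def-psik} is nowhere stated explicitly in this paper (it is implicit in the construction and proved in \cite{FK-NM2011,KCDMJP2022classical}), so you should either cite it precisely or bypass it by reading $\phi^{[1]}$ directly off Theorem~\ref{mainthmquadratics}. Two cosmetic remarks: the parenthetical claim that $\psi^{[1]}$ has degree exactly $1$ is not needed (the choice of $\mu$ only uses $d\neq0$), and the identification $c_n=\gamma_n$ on the quadratic lattice deserves one line, since the displayed expansion \eqref{Dx-xn} is stated only for the $q$-lattice.
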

We denote by $\mathcal{D}_q$ the Askey-Wilson operator, obtained from the $D_x$ operator by restricting the $q$-quadratic lattice to $x(s)=(q^{-s}+q^s)/2$. It is important to notice that the previous theorem was solved in \cite{Al-Salam1995} using a different method and the advantage of this method developed here using Theorem $\ref{main-Thm1}$ lies on the fact that we can use it to solve equations of type \eqref{second_charact} in a more general form. Indeed, the following problem \cite[Conjecture 24.7.8]{I2005} is a conjecture posed by M. E. H. Ismail. 
\begin{conjecture}\label{IsmailConj2478}
Let $(P_n)_{n\geq0}$ be a monic OPS and $\pi$ be a polynomial of degree at most $2$
which does not depend on $n$. If $(P_n)_{n\geq0}$ satisfies
\begin{align}
\pi(x)\mathcal{D}_q P_n (x)= (a_n x+b_n)P_n (x) + c_n P_{n-1} (x)\;, \label{0.2Dqa}
\end{align}
then $(P_n)_{n\geq0}$ are continuous $q$-Jacobi polynomials, Al-Salam-Chihara polynomials,
or special or limiting cases of them. The same conclusion holds if $\pi$ has degree
$s+1$ and the condition $\eqref{0.2Dqa}$ is replaced by
\begin{align}\label{0.2Dq-general}
\pi(x)\mathcal{D}_qP_n(x)=\sum_{k=-r} ^{s} c_{n,k}P_{n+k}(x)\;,
\end{align}
for positive integers $r$, $s$, and a polynomial $\pi$ which does not depend on n.
\end{conjecture}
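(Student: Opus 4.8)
The plan is to reduce Conjecture~\ref{IsmailConj2478} to the classification of $x$-classical OPS on the Askey-Wilson lattice $x(s)=(q^{-s}+q^s)/2$ provided by Theorem~\ref{main-Thm1}, following the strategy already used above for the $q$-Hermite polynomials (Lemmas~\ref{conject-is-dx-classical} and~\ref{system-to-solve}). Here $\mathcal{D}_q=\mathrm{D}_x$ on that lattice, and $\mathbf{u}$ denotes the regular functional for which $(P_n)_{n\geq0}$ is an OPS. The first goal is to show that $\mathbf{u}$ is $x$-classical. Combining \eqref{0.2Dqa} with the recurrence \eqref{TTRR_relation}, one writes, for every $n$,
$$
\pi(x)\,\mathrm{D}_xP_n(x)=a_nP_{n+1}(x)+(a_nB_n+b_n)P_n(x)+(a_nC_n+c_n)P_{n-1}(x)\,;
$$
applying $\mathrm{D}_x$ to \eqref{TTRR_relation} and combining \eqref{1.3} with the previous line, one likewise obtains $\pi(x)\,\mathrm{S}_xP_n(x)$ as a linear combination of $P_{n+2},\dots,P_{n-2}$. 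Testing both identities against the dual basis $(\mathbf{a}_m)_{m\geq0}$ of $(P_n)_{n\geq0}$, exactly as in the derivations of \eqref{def-R1} and \eqref{def-R2}, all but finitely many pairings vanish, and one concludes
$$
\mathbf{D}_x(\pi\mathbf{u})=R_1\mathbf{u}\,,\qquad \mathbf{S}_x(\pi\mathbf{u})=R_2\mathbf{u}\,,
$$
where $R_1\in\mathcal{P}_1$, $R_2\in\mathcal{P}_2$ have coefficients that are explicit, via \eqref{expression-an}, in $B_0,B_1,C_1,C_2,a_1,b_1,c_1$.

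Next, I would copy the elimination of Lemma~\ref{conject-is-dx-classical}: applying $\mathbf{D}_x$ to $\mathbf{S}_x(\pi\mathbf{u})=R_2\mathbf{u}$, invoking \eqref{def Dx^nSx-u} with $n=1$ applied to $\pi\mathbf{u}$ together with $\mathbf{D}_x(\pi\mathbf{u})=R_1\mathbf{u}$, and then \eqref{a} with $f=\texttt{U}_1$ (for which $\mathrm{S}_x\texttt{U}_1-\alpha^{-1}\texttt{U}_1\mathrm{D}_x\texttt{U}_1=\alpha^{-1}\texttt{U}_1$ and $\mathrm{D}_x\texttt{U}_1=\alpha^2-1$), one arrives at
$$
\mathbf{D}_x\big((R_2-\texttt{U}_1R_1)\mathbf{u}\big)=\mathbf{S}_x\big(\alpha R_1\mathbf{u}\big)\,.
$$
Since $\texttt{U}_1\in\mathcal{P}_1$, we get $\phi:=R_2-\texttt{U}_1R_1\in\mathcal{P}_2$ and $\psi:=\alpha R_1\in\mathcal{P}_1$; moreover, as $\mathbf{u}$ is regular, $\psi\mathbf{u}=0$ or $\phi\mathbf{u}=0$ would force $\pi\mathbf{u}=0$, hence $\pi\equiv0$ and \eqref{0.2Dqa} vacuous, so $(\phi,\psi)\neq(0,0)$. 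Therefore $\mathbf{u}$ is $x$-classical in the sense of Definition~\ref{NUL-def}.

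Once $x$-classicality is established, Theorem~\ref{x-admissible} applies and Theorem~\ref{main-Thm1} gives the recurrence coefficients $B_n,C_n$ in closed form \eqref{Bn-Dx}--\eqref{Cn-Dx} in terms of the coefficients of $\phi$ and $\psi$. In parallel, substituting \eqref{0.2Dqa} into \eqref{TTRR_relation} and applying $\mathrm{S}_x$ to \eqref{TTRR_relation} via \eqref{1.3}--\eqref{1.4}, exactly as in the proof of Lemma~\ref{system-to-solve}, produces an over-determined system of difference equations relating $a_n,b_n,c_n$ to $B_n,C_n$; identifying the top coefficients through \eqref{Dx-xn}--\eqref{Sx-xn} expresses $a_n,b_n,c_n$ through the coefficients of $\pi$ and of $(P_n)_{n\geq0}$, so that the remaining equations, after inserting \eqref{Bn-Dx}--\eqref{Cn-Dx}, become finitely many algebraic constraints on the coefficients of $\phi,\psi,\pi$. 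To solve them I would, as in the $q$-Hermite proof, use the asymptotics of Corollary~\ref{assymptotic-behaviour-coef-ttrr-Dx} for $B_n$ and the analogous estimates for $C_n$: the leading $q^{\pm n}$ behaviour already forces most of the free parameters, leaving a short list of admissible triples $(\phi,\psi,\pi)$, organized by $\deg\pi\in\{0,1,2\}$ and by whether the leading coefficient of $\phi$ vanishes. For each surviving case, Theorem~\ref{main-Thm1} (after the affine change of variable of Theorem~\ref{ThmMain1}) identifies $(P_n)_{n\geq0}$ with an explicit Askey-Wilson family $Q_n(\cdot;a_1,a_2,a_3,a_4|q)$, and reading off the allowed parameter constellations exhibits precisely the continuous $q$-Jacobi polynomials, the Al-Salam-Chihara polynomials, and their special and limiting cases. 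For the general relation \eqref{0.2Dq-general}, the same dual-basis computation yields $\mathbf{D}_x(\pi\mathbf{u})=R_1\mathbf{u}$ and $\mathbf{S}_x(\pi\mathbf{u})=R_2\mathbf{u}$ with larger but finite degrees (governed by $r$ and $\deg\pi=s+1$), hence an $x$-semiclassical relation for $\mathbf{u}$; analysing the admissible pairs jointly with the structure relation then forces, by the same mechanism, the same list of solutions.

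The step I expect to be the main obstacle is the end of the first stage: verifying that the elimination valid for constant $\pi$ still lands inside $\mathcal{P}_2\times\mathcal{P}_1$, i.e. that the degree bound $\deg(R_2-\texttt{U}_1R_1)\leq2$ genuinely holds and that $(\phi,\psi)\neq(0,0)$. This is exactly where the hypothesis that $\mathbf{u}$ is \emph{regular} (together with $\pi\not\equiv0$) is indispensable: for a non-regular functional $\pi$ could vanish at a mass point and produce a spurious lower-order solution. The only other difficulty is the bookkeeping in the classification stage, but, as the $q$-Hermite argument shows, once the problem has been reduced to $x$-classical functionals the closed forms of Theorem~\ref{main-Thm1} and the asymptotics of Corollary~\ref{assymptotic-behaviour-coef-ttrr-Dx} make the final classification essentially mechanical.
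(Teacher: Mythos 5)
Your proposal sets out to prove the full statement, but in this paper the statement is a \emph{conjecture}, and the discussion immediately following it shows that its second half is false: the continuous dual $q$-Hahn polynomials $R_n(x;1,-1,q^{1/4}|q^{1/2})$ satisfy a relation of type \eqref{0.2Dq-general} with $\deg\pi=2$, $r=1$, $s=2$ (the explicit identity displayed after the conjecture, taken from \cite{KCDM2022counterexample, DM2022firstart}), yet they are neither continuous $q$-Jacobi nor Al-Salam-Chihara polynomials nor special or limiting cases of them. Hence your final paragraph, which asserts that for \eqref{0.2Dq-general} ``the same mechanism forces the same list of solutions,'' cannot be repaired: any argument reaching that conclusion would contradict this counterexample. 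Even for the first half \eqref{0.2Dqa}, the resolution reported in the paper (from \cite{KCDMJP2022conject}) gives as solutions certain Al-Salam-Chihara polynomials, the Chebyshev polynomials of the first kind, and continuous $q$-Jacobi polynomials, so the conjectured list itself requires amendment; the paper proves none of this here, it only cites it.

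There is also a technical gap in your first stage. The elimination of Lemma \ref{conject-is-dx-classical} produces a pair $(\phi,\psi)\in\mathcal{P}_2\times\mathcal{P}_1$ because there the structure relation has no polynomial factor $\pi$ and no $(a_nx+b_n)P_n$ term, so $R_1\in\mathcal{P}_1$ and $R_2\in\mathcal{P}_2$. Under \eqref{0.2Dqa} with $\deg\pi\le 2$ and $a_n\neq0$, the dual-basis computation gives $\mathbf{D}_x(\pi\mathbf{u})=R_1\mathbf{u}$ and $\mathbf{S}_x(\pi\mathbf{u})=R_2\mathbf{u}$ with $R_1$ of degree up to $2$ and $R_2$ of degree up to $3$; the combination $R_2-\texttt{U}_1R_1$ then need not lie in $\mathcal{P}_2$, and the resulting functional equation is in general of semiclassical rather than classical type, so Definition \ref{NUL-def}, Theorem \ref{x-admissible} and Theorem \ref{main-Thm1} do not apply directly. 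This is precisely why \eqref{0.2Dqa} resisted the straightforward adaptation of the $q$-Hermite argument and required the separate, more delicate analysis of \cite{KCDMJP2022conject}; your sketch does not address how to recover classicality (or how to handle the genuinely semiclassical cases), nor how the degree bound $\deg\phi\le2$, which you flag yourself as the main obstacle, is actually enforced.
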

Although \eqref{0.2Dqa} is a simple relation, this problem was only solved recently due to the complexity of the Askey-Wilson operator and his properties. Using the method developed here, it is proved in \cite{KCDMJP2022conject} that the only solutions of \eqref{0.2Dqa} are some particular cases of the Al-Salam Chihara polynomials, the Chebyshev polynomials of the first kind and the continuous $q$-Jacobi polynomials. The following relation is proved in \cite{KCDM2022counterexample, DM2022firstart}:

\begin{align*}
(\alpha^2-1)(x^2-1)&\mathcal{D}_qR_{n}(x;1,-1,q^{1/4}|q^{1/2})=(\alpha^2-1)\gamma_nR_{n+1}(x;1,-1,q^{1/4}|q^{1/2})\\
&+\big(c_{n+1}-\alpha c_n +(1-\alpha)\alpha_n B_n\big)R_n(x;1,-1,q^{1/4}|q^{1/2})\\
&+\big((B_n-\alpha B_{n-1})c_n +(1-\alpha^2)\gamma_n C_n \big)R_{n-1}(x;1,-1,q^{1/4}|q^{1/2})\\
&+(c_{n-1}C_n-\alpha c_nC_{n-1})R_{n-2}(x;1,-1,q^{1/4}|q^{1/2}) \;,
\end{align*}
where $B_n$, $c_n$ and $C_n$ are given by 
\begin{align*}
B_n&=\frac{1}{2}\Big((1+q^{-1/2})q^{n/2} +1-q^{-1/2}\Big)q^{(2n+1)/4}\;,\\
C_n&=\frac{1}{4}(1+q^{(n-1)/2})(1-q^{n/2})(1-q^{n-1/2})\;,~c_n=C_nq^{-(2n-1)/4}\;,
\end{align*}
and $(R_n(\cdot;a,b,c|q))_{n\geq 0}$ the continuous monic dual $q$-Hahn polynomial.
This equation is of type \eqref{0.2Dq-general} with $\deg \pi =2$, $r=1$ and $s=2$ giving therefore a counterexample to \eqref{0.2Dq-general}.

In view of this section, it is clear that Theorem \ref{main-Thm1} presented here is useful to derive some characterization theorems on OPS and to solve some challenging problem in the subject.

\section*{Acknowledgements }
 This work is partially supported by the Centre for Mathematics of the University of Coimbra-UIDB/00324/2020, funded by the Portuguese Government through FCT/MCTES. DM is partially supported by ERDF and Consejer\'ia de Econom\'ia, Conocimiento, Empresas y Universidad de la Junta de Andaluc\'ia (grant UAL18-FQM-B025-A) and by the Research Group FQM-0229 (belonging to Campus of International Excellence CEIMAR).

\end{document}